\theoremstyle{plain}
\newtheorem{theorem}{Theorem}
\newtheorem{lemma}{Lemma}
\newtheorem{definition}{Definition}
\newtheorem{corollary}{Corollary}
\newtheorem{proposition}{Proposition}
\newtheorem{remark}{Remark}
\newenvironment{proof4}{{\noindent\bf Proof of Theorem 4.}}{\hfill$\Box$\\}
\begin{document}
\title{\textbf{Robust  Kantorovich's theorem on Newton's method under  majorant condition in Riemannian Manifolds}}

\author{ T. Bittencourt \thanks{IME/UFG,  CP-131, CEP 74001-970 - Goi\^ania, GO, Brazil (Email: {\tt
      tiberio.b@gmail.com}). This author was supported  by CAPES.}
\and
O. P. Ferreira\thanks{IME/UFG,  CP-131, CEP 74001-970 - Goi\^ania, GO, Brazil (Email: {\tt
      orizon@mat.ufg.br}). This author was partially supported by FAPEG 201210267000909 - 05/2012, PRONEX-Optimization(FAPERJ/CNPq), CNPq Grants 4471815/2012-8,  305158/2014-7.}
}
\date{May  19, 2015}
\maketitle
\begin{abstract}
A robust affine invariant version  of Kantorovich's theorem on Newton's method,  for   finding a zero of a differentiable vector field defined on a complete Riemannian manifold,  is presented in this paper.  In the analysis presented, the  classical Lipschitz condition is relaxed  by  using a general  majorant function, which allow  to  establish existence and local uniqueness of the solution  as well as unifying  previously results pertaining Newton's method.   The most important in our analysis is the robustness,  namely,   is given  a  prescribed ball,  around the point  satisfying Kantorovich's assumptions,  ensuring convergence of the method for any starting point in this ball. Moreover,   bounds for $Q$-quadratic convergence of  the method which depend  on the  majorant function is obtained.

\noindent
{{\bf Keywords:}  Newton's method, robust Kantorovich's theorem, majorant function, vector field, Riemannian manifold}
\end{abstract}

\maketitle
\section{Introduction}\label{sec:int}
Extension  of   concepts and techniques  as well as methods of Mathematical Programming   from  the  Euclidean space  to  Riemannian setting it is natural  and  has been done frequently before; see, e.g.,\cite{Absil2014,  AdlerDedieuShub2002,  AABCP2014-3, FerreiraSvaiter2002, Manton2015,  Smith1994, WenWotao2013}.    The motivation of this extensions,  which in general is nontrivial,   is either of purely theoretical nature or aims at obtaining  efficient algorithms; see, e.g., \cite{ Absil2014,  AdlerDedieuShub2002, EdelmanAriasSmith1999, Karmarkar1998, NesterovTodd2002, Manton2015, MillerMalick2005,   Smith1994, WenWotao2013}.  Indeed, many optimization problems are naturally posed on Riemannian manifolds, which has a specific underlying geometric and algebraic structure that could be exploited to greatly reduce the cost of obtaining the solutions. For instance,  in order to take advantage of the Riemannian geometric  structure,   it is suitable to treat  some constrained optimization problems as one of finding the zeros of a gradient vector field  on a Riemannian manifolds rather than use the method of Lagrange multipliers or projection idea for solving the problem; see \cite{Absil2014,  AdlerDedieuShub2002, Manton2015,  Smith1994, WenWotao2013}.  In this case, constrained optimization problems can be seen as unconstrained one from the Riemannian geometry viewpoint. Besides, the Riemannian geometry allows  to induce new research directions  so as to   produce competitive algorithms;  see \cite{EdelmanAriasSmith1999, Karmarkar1998, NesterovTodd2002, Smith1994}. In this paper, instead of considering the problem of finding the zero of the gradient field  on a Riemannian manifolds, let us consider the more general problem of  finding a zeros of a  vector field defined on a Riemannian manifold.

On the other hand, the Newton's  method and its variant are  powerful tools for finding a zero of  nonlinear function  in real or complex Banach space. Besides its practical applications, Newton's method is  also a powerful theoretical tool having a wide range of applications in pure mathematics; see \cite{Blum1998, Hamilton1982,  KrantzParks2013, Moser1961, Nash1956, Wayne1996}.  Therefore,  a couple of papers have dealt with the issue of  generalization of Newton's  method   and its variant   from Euclidean  to  Riemannian  setting in order to go further in the study of the convergence properties of this method.  Early works dealing with the generalization of Newton's methods to  Riemannian setting include \cite{ DedieuShub2000, EdelmanAriasSmith1999, Gabay1982,  Owren2000,  Shub1986, Udriste1994}.   Actually, the generalization  of   Newton's  method  to  Riemannian setting has  been done with several different purposes, including the purpose of finding a zeros of a gradient vector field or,  more generally,  with  the purpose of  finding a zero of a differentiable vector field; see  \cite{Absil2014, AdlerDedieuShub2002, Argyros2007,  ArgyroHilout2009,   ArgyrosMagrenan2014,     DedieuPriouretMalajovich2003,  DedieuShub2000,  EdelmanAriasSmith1999,   FerreiraRoberto2012, FerreiraSvaiter2002,  LiWang2006, LiJinhua2008, LiWangDedieu2009, Manton2015, MillerMalick2005, RingBenedikt2012,  Schulz2014, Smith1994, Wang2011,  WangHuangChong2009,  WangJenLi2012, Zhang2010} and the references therein.

 Properties of convergence of  Newton's method have been extensively studied on several papers due to the important role that it plays in the development of  numerical methods for finding a zero of a differentiable vector field defined on a complete Riemannian manifolds.  In   2002   Ferreira and Svaiter in \cite{FerreiraSvaiter2002} extended  the  Kantorovich's theorem on the Newton's method  to Riemannian setting  using a new technique which simplifies the analysis and proof of this theorem.    It is worth mention   that, in a similar spirit,    an  extensions of the famous Smale's  theory; see \cite{Smale1986},  to analytic vector fields  on analytic Riemannian manifolds were done in 2003 by Dedieu et al. in \cite{DedieuPriouretMalajovich2003}. The basic idea of \cite{FerreiraSvaiter2002} was  to combine a formulation of Kantorovich's theorem by means of quadratic  majorant functions, see \cite{ZabrejkoNguen1987} for more general majorant functions,  with the definitions of good regions for the  Newton's method. In these regions, the majorant function bounds the vector fields  which the zero  is to be found, and the behavior of the  Newton's iteration in these regions is estimated using iterations associated to the majorant function. Moreover, as a whole, the union of all these regions is invariant under  Newton's iteration.  Afterward, this technique was successfully employed for proving generalized versions of Kantorovich's theorem in Riemannian setting.  Inspired by previous work of Zabrejko and Nguen in \cite{ZabrejkoNguen1987} on Kantorovich's majorant method,  a radial parametrization of a Lipschitz-type   and   L-average Lipschitz  affine invariant majorante conditions were introduced in Riemannian setting  by   Alvarez et al. in \cite{Alvarez2008} and  Li and Wang  in  \cite{LiJinhua2008}, respectively, in order to  establish existence and local uniqueness of the solution as well as unifying  previously  convergence criterion of Newton's method.

In the present paper, we will use the technique introduced in \cite{FerreiraSvaiter2002}, see also \cite{FerreiraSvaiter2012},  to present a robust affine invariant version  of the Kantorovich's theorem on the Newton's method  finding a zeros of a differentiable vector field defined on a complete Riemannian manifold.  In our analysis, the classical Lipschitz condition is relaxed  using a general  majorant function.  The analysis presented provides a clear relationship between the majorant function  and the vector field under consideration.  However,   the most important in our analysis is the robustness,  namely,   we give a  prescribed ball,  around the point  satisfying the Kantorovich's assumptions,  ensuring convergence of the method for any starting point in this ball. Moreover,  we  establish bounds for $Q$-quadratic convergence of  the method which depend  on the  majorant function.    Also, as in \cite{Alvarez2008} and  \cite{LiJinhua2008},  this analysis  allows us   establish existence and local uniqueness of the solution as well as unifying  previously results pertaining Newton's method.

The organization of the paper is as follows. In Section~\ref{sec:aux},  some notations and one basic results used in the paper are presented. In Section~\ref{SeC:RobKantTheo}, the main result is stated, namely, the robust  affine invariant  Kantorovich's theorem for Newton's method and  in   Section~\ref{SeC:RobKantTheo}  the  affine invariant version, which is used for proving the robust one is stated and proved. In Section~\ref{pr:mr} we prove the main theorem.  In  Section \ref{SEC:SpecCase} three special case of the main theorem is presented.  Some final remarks are made in Section \ref{Sec:FinalRemarl}.

\subsection{Notation and auxiliary results} \label{sec:aux}

In this section we recall some notations, definitions and basic properties of Riemannian manifolds used throughout the paper,  they can be found, for example, in \cite{DoCa92}  and \cite{La95}.

Throughout the paper, $\cal{M}$ is a smooth manifold and  $C^{1}(\cal M)$ is the class of all  continuously differentiable functions  on  $\cal M$. The space of vector fields  $C^{r}(\cal M)$  on $\cal M $ is denoted by ${\mathcal X}^{r}(\cal M)$,   by $T_{p}{\cal M}$ we denote the tangent space of $\cal M$ at $p$ and by $T{\cal M}={\bigcup_{x\in {\cal M}}}\,T_{x}{\cal M}$ the {\it tangent bundle \/} of $\cal M$.
Let $\cal M$ be endowed with a Riemannian metric $ {\langle} \cdot , \cdot {\rangle}$, with corresponding norm denoted by $\|\cdot\|$, so that $\cal M$ is now a {\it Riemannian manifold}. Let us recall that the metric can be used to define the length of a piecewise $C^{1}$ curve $\zeta :\, [a,b] \rightarrow {\cal M}$ by
$$
\ell[\zeta, a,b] := \int_a^b \|\zeta^{\prime}(t)\| dt.\
$$
Minimizing this length functional over the set of all such curves we obtain a distance $d(p,q)$, which induces the original topology on $\cal M$. The open and closed  balls of radius $r>0$ centered at $p$ are  defined, respectively, as
$$
B(p,r):=\left\{q\in M:d(p,q) <r\right\}, \qquad B[p,r]:=\left\{q\in M:d(p,q) \leq r\right\}.
$$
 Let $\zeta$ be a curve joining the points $p$ and $q$ in $\cal M$ and let
$\nabla$ be the Levi-Civita connection associated to $(\cal M,{\langle}, {\rangle})$. For each $t \in [a,b]$, $\nabla$ induces an isometry, relative to $ \langle \cdot , \cdot \rangle  $,
\begin{equation} \label{D:dtp}
\begin{split}
P_{\zeta,a,t} \colon T _{\zeta(a)} {\cal M} &\longrightarrow T _
{\zeta(t)} {\cal M}\\
v &\longmapsto P_{\zeta,a,t}\, v = V(t),
\end{split}
\end{equation}
where $V$ is the unique vector field on $\zeta$ such that
$ \nabla_{\zeta'(t)}V(t) = 0$ and $V(a)=v$,
the so-called {\it parallel transport} along $\zeta$ from $\zeta(a)$ to $\zeta(t)$. Note also that
\begin{equation} \label{eq:ptp}
P_{\zeta,b_1,b_2}\circ P_{\zeta,a,b_1} = P_{\zeta,a,b_2}, \qquad P_{\zeta,b,a}  = {P_{\zeta,a,b} } ^{-1}.
\end{equation}
A vector field $V$ along $\zeta$ is said to be {\it parallel} if $\nabla_{\zeta^{\prime}} V=0$. If $\zeta^{\prime}$ itself is parallel, then we say that $\zeta$ is a {\it geodesic}. The geodesic equation $\nabla_{\ \zeta^{\prime}} \zeta^{\prime}=0$ is a second order nonlinear ordinary differential equation, so the geodesic $\zeta$ is determined by its position $p$ and velocity $v$ at $p$. The restriction of a geodesic to a closed bounded interval is called a {\it geodesic segment}. It is easy to check that $\|\zeta ^{\prime}\|$ is constant. We usually do not distinguish between a geodesic and its geodesic segment, as no confusion can arise. We say that $ \zeta $ is {\it normalized} if $\| \zeta^{\prime}\|=1$.  A geodesic $\zeta: [a, b]\to {\cal M}$ is said to be {\it minimal} if its length is equal the distance of its end points, i.e. $\ell[\zeta, a,b]=d(\zeta(a), \zeta(b))$.

A Riemannian manifold is {\it complete} if its geodesics are defined for any values of $t$. {\it
In  this paper, all manifolds  $\mathcal{M}$ are assumed to be complete}.   The Hopf-Rinow's
theorem asserts that if this is the case then any pair of points, say $p$ and $q$, in $\mathcal{M}$ can be joined by
a (not necessarily unique) minimal geodesic segment. Moreover, $({\cal M}, d)$ is a complete metric space
and bounded and closed subsets are compact. The {\it exponential map} at $p,$ $\mbox{exp}_{p}:T_{p}  {\cal M} \rightarrow {\cal M} $, is defined by $\mbox{exp}_{p}v\,=\, \zeta _{v}(1)$, where $\zeta _{v}$ is the geodesic defined by its position $p$ and velocity $v$ at $p$ and  $ \,\zeta _{v}(t)\,=\,\mbox{exp}_{p}tv$ for any value of $t$.

Let $X \in C^{1}(\cal M)$. The  covariant derivative of $X$ determined by
the Levi-Civita connection $\nabla$ defines at each $p\in {\cal M}$ a linear map $\nabla X (p):T_p {\cal M} \to T_p {\cal M}$ given by
\begin{equation}\label{D:ddc}
 \nabla X(p) v:=\nabla_Y X (p),
\end{equation}
where $Y$ is a vector field such that $Y(p)=v$.

\begin{definition} \label{d:scd} Let $Y_1, \ldots, Y_n$ be vector fields on $\cal M$. Then,  the $n$-th covariant derivative of $X$ with respect to $Y_1, \ldots, Y_n$ is defined inductively by
\[
\nabla^{2}_{{\{Y_1, Y_2\}}}X:=\nabla_{{Y_2}}\nabla_{Y_1}X, \qquad
 \nabla^{n}_{{\{Y_i\}_{i=1}^{n}}}X:=\nabla_{{Y_n}}( \nabla_{Y_{n-1}} \cdots \nabla_{Y_{1}}X).
\]
\end{definition}
\begin{definition} \label{d:scdp} Let  $p\in {\cal M}$. Then,  the $n$-th covariant derivative of $X$  at $p$ is the $n$-th multilinear map $\nabla^n X (p):T_p {\cal M}\times \ldots \times T_p {\cal M} \to T_p {\cal M}$ defined by
\[
\nabla^{n}X(p)(v_1, \dots, v_n):=\nabla^{n}_{{\{Y_i\}_{i=1}^{n}}}X(p),
\]
where $Y_1, \ldots, Y_n$ are vector fields on $\cal M$ such that $Y_1(p)=v_1, \ldots, Y_n(p)=v_n$.
\end{definition}
We remark that Definition~\ref{d:scdp} only depends on the $n$-tuple of vectors $(v_1, \ldots, v_n)$ since the covariant derivative is tensorial in each vector field $Y_i$.
\begin{definition} \label{d:norm}
 Let  $p\in {\cal M}$. The norm of an
$n$-th multilinear map $A:T_p {\cal M}\times \ldots \times T_p {\cal M} \to T_p {\cal M}$  is defined by
  \[
 \|A\|=\sup \left\{ \|A(v_1, \dots, v_n) \| \;:\;\; v_1, \dots, v_n\in T_p {\cal M}, \,\|v_i\|=1, \, i=1, \ldots, n \right\}.
 \]
 In particular, the norm of the $n$-th covariant derivative of $X$  at $p$ is given  by
 \[
 \|\nabla^{n}X(p)\|=\sup \left\{\|\nabla^{n}X(p)(v_1, \dots, v_n) \| \;:\;\; v_1, \dots, v_n\in T_p {\cal M}, \,\|v_i\|=1, \, i=1, \ldots, n \right\}.
 \]
\end{definition}
Now, the {\it Fundamental Theorem of Calculus} for a vector field $X$ becomes
\begin{lemma}\label{le:TFC}
Let $\Omega$ be an open subset of $\cal M$, $X$ a
$C^1$
vector field defined on $\Omega$ and  $\zeta:[a, b]\to {\Omega}$  a $C^1$ curve. Then
\[
P_{\zeta,t,a} X(\zeta(t))= X(\zeta(a)) + \int_a^t
   P_{\zeta,s,a} \nabla X (\zeta(s)) \,\zeta'(s) \,\rm{d}s, \qquad t\in [a, b].
\]
\end{lemma}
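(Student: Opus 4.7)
The plan is to reduce the claim to the ordinary Fundamental Theorem of Calculus in the fixed finite-dimensional vector space $T_{\zeta(a)}\mathcal{M}$. Define the curve
\[
V(t) := P_{\zeta,t,a}\,X(\zeta(t)) \in T_{\zeta(a)}\mathcal{M}, \qquad t\in[a,b].
\]
Since the parallel transport is a linear isometry and $X$ is $C^1$ along the $C^1$ curve $\zeta$, the map $t \mapsto V(t)$ is continuously differentiable, and the classical Euclidean Fundamental Theorem of Calculus in $T_{\zeta(a)}\mathcal{M}$ gives $V(t)=V(a)+\int_a^t V'(s)\,ds$. One trivially has $V(a)=X(\zeta(a))$ by \eqref{eq:ptp}, so the entire content of the lemma reduces to identifying $V'(s) = P_{\zeta,s,a}\,\nabla X(\zeta(s))\,\zeta'(s)$.

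The key step is therefore to prove the general identity
\[
\frac{d}{ds}\,P_{\zeta,s,a}\,W(s) \;=\; P_{\zeta,s,a}\,\nabla_{\zeta'(s)} W(s)
\]
for any $C^1$ vector field $W$ along $\zeta$. For this, I would use the semigroup property \eqref{eq:ptp}, which yields $P_{\zeta,s_0+h,a}=P_{\zeta,s_0,a}\circ P_{\zeta,s_0+h,s_0}$. Pulling the (fixed) linear isometry $P_{\zeta,s_0,a}$ out of the difference quotient gives
\[
\lim_{h\to 0}\frac{P_{\zeta,s_0+h,a}W(s_0+h)-P_{\zeta,s_0,a}W(s_0)}{h} \;=\; P_{\zeta,s_0,a}\,\lim_{h\to 0}\frac{P_{\zeta,s_0+h,s_0}W(s_0+h)-W(s_0)}{h},
\]
and the inner limit is, by the standard characterization of the Levi-Civita connection in terms of parallel transport, exactly $\nabla_{\zeta'(s_0)} W(s_0)$.

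To finish, apply the identity to $W(s):=X(\zeta(s))$. By the definition of the covariant derivative of $X$ as a linear map in \eqref{D:ddc}, $\nabla_{\zeta'(s)}(X\circ\zeta) = \nabla X(\zeta(s))\,\zeta'(s)$. Substituting into the Euclidean FTC for $V$ yields the claim. The main obstacle is precisely the middle step: justifying rigorously that differentiation commutes with parallel transport in the way asserted, which is where the isometry property of $P_{\zeta,\cdot,\cdot}$ and the composition rule \eqref{eq:ptp} are essential; once that lemma is in hand, everything else is bookkeeping.
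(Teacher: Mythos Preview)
Your proof is correct. The paper itself does not supply a proof of this lemma; it merely cites \cite{FerreiraSvaiter2002}, so there is no in-paper argument to compare against. Your approach---pulling everything back to the fixed tangent space $T_{\zeta(a)}\mathcal{M}$ via $V(t)=P_{\zeta,t,a}X(\zeta(t))$, applying the ordinary Fundamental Theorem of Calculus there, and identifying $V'(s)$ through the composition rule \eqref{eq:ptp} together with the parallel-transport characterization of the covariant derivative---is the standard argument for this identity and is precisely what one would expect the cited reference to contain.
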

\begin{proof}
See \cite{FerreiraSvaiter2002}.
\end{proof}
\begin{lemma}\label{le:TFC2}
Let $\Omega$ be an open subset of $\cal M$, $X$ a
$C^2$
vector field defined on $\Omega$ and  $\zeta:[a, b]\to {\Omega}$  a $C^1$ curve. Then for all $Y\in {\mathcal X}(\cal M)$ we have
\[
P_{\zeta,t,a} \nabla X(\zeta(t))\,Y(\zeta(t))= \nabla X(\zeta(a))Y(\zeta(a)) + \int_a^t
   P_{\zeta,s,a} \nabla^{2} X (\zeta(s)) \left(Y(\zeta(s)), \zeta'(s)\right) \,\rm{d}s,  \qquad t\in [a, b].
\]
\end{lemma}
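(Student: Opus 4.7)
The plan is to reduce Lemma~\ref{le:TFC2} to the already-established Lemma~\ref{le:TFC} by applying it to the auxiliary vector field
\[
Z := \nabla_Y X,
\]
which is defined on $\Omega$ and is of class $C^1$ because $X \in C^{2}(\Omega)$ and $Y$ is smooth. This is the natural move because Lemma~\ref{le:TFC} already packages the covariant fundamental theorem of calculus, so the work is just to recognize the integrand on the right-hand side as the second covariant derivative of $X$.

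First, I would invoke Lemma~\ref{le:TFC} with $X$ replaced by $Z$ and with the same curve $\zeta$, obtaining
\[
P_{\zeta,t,a}\, Z(\zeta(t)) \;=\; Z(\zeta(a)) \;+\; \int_a^t P_{\zeta,s,a}\, \nabla Z(\zeta(s))\,\zeta'(s)\, ds, \qquad t\in[a,b].
\]
By the pointwise definition \eqref{D:ddc}, the left-hand side is exactly $P_{\zeta,t,a}\,\nabla X(\zeta(t))\,Y(\zeta(t))$ and the first term on the right is $\nabla X(\zeta(a))\,Y(\zeta(a))$, which matches the two endpoint terms in the statement.

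The remaining step, which is the only nontrivial one, is to identify the integrand with $\nabla^{2} X(\zeta(s))(Y(\zeta(s)),\zeta'(s))$. Using \eqref{D:ddc} once more, for any vector field $W$ on $\cal M$ extending $\zeta'(s)$ at the point $\zeta(s)$ (such an extension exists locally, and the value of $\nabla_W Z$ at $\zeta(s)$ depends only on $W(\zeta(s))$ by tensoriality of the covariant derivative in the direction slot), I would write
\[
\nabla Z(\zeta(s))\,\zeta'(s) \;=\; \bigl(\nabla_{W}\nabla_{Y} X\bigr)(\zeta(s)).
\]
By Definition~\ref{d:scd}, $\nabla_W \nabla_Y X = \nabla^{2}_{\{Y,W\}}X$, and by Definition~\ref{d:scdp} this evaluates at $\zeta(s)$ to $\nabla^{2}X(\zeta(s))(Y(\zeta(s)),W(\zeta(s))) = \nabla^{2}X(\zeta(s))(Y(\zeta(s)),\zeta'(s))$. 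Plugging this identification into the integral above yields the claimed formula.

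The main obstacle, which is really only a bookkeeping issue, is handling the fact that $\zeta'(s)$ is a vector along the curve rather than a vector field on $\mathcal{M}$; this is resolved by a local extension argument together with the tensoriality of $\nabla$ in the direction variable, so that the definitions in Section~\ref{sec:aux} apply cleanly. Everything else is a direct translation between the $\nabla Z$ notation of Lemma~\ref{le:TFC} and the $\nabla^{2}X$ notation of Definitions~\ref{d:scd} and~\ref{d:scdp}.
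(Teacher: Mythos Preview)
Your argument is correct. Applying Lemma~\ref{le:TFC} to the $C^{1}$ field $Z=\nabla_Y X$ and then unwinding Definitions~\ref{d:scd}--\ref{d:scdp} is exactly the natural proof; the only point requiring care is the extension of $\zeta'(s)$ to a vector field $W$, and you handle this correctly by noting that the covariant derivative is tensorial in the \emph{outer} direction slot, so $\nabla_W(\nabla_Y X)$ at $\zeta(s)$ depends only on $W(\zeta(s))=\zeta'(s)$.

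As for the comparison: the paper does not give its own proof of this lemma but simply refers to \cite{LiWang2006}. Your write-up is therefore a genuine addition rather than a duplication---it supplies a short self-contained argument that stays entirely within the toolkit already assembled in Section~\ref{sec:aux} (Lemma~\ref{le:TFC} plus the definitions of higher covariant derivatives), so a reader need not chase the external reference.
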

\begin{proof}
See \cite{LiWang2006}.
\end{proof}
\begin{lemma}[Banach's Lemma] \label{lem:ban}
Let $B$ be a linear operator  and  let $I_p$ be the identity operator in  $T _p M$. If  $\|B-I_p\|<1$  then $B$ is nonsingular and
$
\|B^{-1}\|\leq 1/\left(1-  \|B-I_p\|\right).
$
\end{lemma}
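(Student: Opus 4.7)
The plan is to adapt the standard Banach lemma argument to the finite-dimensional setting of $T_pM$. Setting $A := I_p - B$, the hypothesis becomes $\|A\| < 1$, and the goal is to exhibit $B^{-1}$ with the claimed norm bound.

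First, I would establish nonsingularity of $B$. Because $T_pM$ is finite-dimensional, it suffices to show injectivity. If $v \in T_pM$ satisfies $Bv = 0$, then $v = v - Bv = Av$, so
\[
\|v\| = \|Av\| \le \|A\|\,\|v\| = \|I_p - B\|\,\|v\|.
\]
Since $\|I_p - B\| < 1$, this forces $\|v\| = 0$ and hence $v = 0$; thus $B$ is invertible.

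For the norm estimate, I would start from the identity $I_p = B \cdot B^{-1} = (I_p - A)B^{-1} = B^{-1} - AB^{-1}$, which rearranges to $B^{-1} = I_p + AB^{-1}$. Taking operator norms yields
\[
\|B^{-1}\| \le \|I_p\| + \|A\|\,\|B^{-1}\| = 1 + \|I_p - B\|\,\|B^{-1}\|,
\]
and rearranging gives $\|B^{-1}\|(1 - \|I_p - B\|) \le 1$, which is exactly the desired bound after dividing by the positive quantity $1 - \|I_p - B\|$.

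There is no real obstacle here: the only two ingredients are the submultiplicativity of the operator norm and the fact that $T_pM$ is finite-dimensional (so that injective linear maps are automatically invertible). An alternative route would be to construct $B^{-1}$ explicitly as the Neumann series $\sum_{k\ge 0} A^k$, whose convergence is immediate from $\|A^k\| \le \|A\|^k$ and the geometric series, and which gives the norm bound $\|B^{-1}\| \le \sum_{k\ge 0}\|A\|^k = 1/(1-\|A\|)$ in one stroke; I mention this because it has the virtue of being self-contained and not requiring finite-dimensionality, matching how the result is usually cited.
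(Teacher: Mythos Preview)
Your argument is correct: both the injectivity-plus-rearrangement route and the Neumann-series alternative are standard and valid proofs of Banach's lemma in this setting. The paper does not give its own proof at all---it simply cites \cite{Smale1986}---so there is nothing further to compare.
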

\begin{proof}
See, for example,  \cite{Smale1986}.
\end{proof}
We also need the following  elementary convex analysis result, see \cite{HiriartLemarecal1}:
\begin{proposition} \label{pr:conv.aux1}
Let $I\subset \mathbb{R}$ be an interval and $\varphi:I\to \mathbb{R}$ be convex. For any $s_0\in \mathrm{int}(I)$,   the left derivative there exist (in $\mathbb{R}$)
$$
D^- \varphi(s_0):={\lim}_{s\to s_0 ^-} \; \frac{\varphi(s_0)-\varphi(s)}{s_0-s}
={\sup}_{s<s_0} \;\frac{\varphi(s_0)-\varphi(s)}{s_0-s}. \\
$$
Moreover,  if $s,t,r\in I$, $s<r$, and $s\leqslant t\leqslant r$ then $\varphi(t)-\varphi(s) \leqslant \left[\varphi(r)-\varphi(s)\right][(t-s)/(r-s)].$
\end{proposition}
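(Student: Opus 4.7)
The plan is to derive everything from the classical three-slope (or ``chord'') inequality for convex functions: for any $a<b<c$ in $I$,
\[
\frac{\varphi(b)-\varphi(a)}{b-a} \;\leq\; \frac{\varphi(c)-\varphi(a)}{c-a} \;\leq\; \frac{\varphi(c)-\varphi(b)}{c-b}.
\]
I would prove this first, directly from the definition of convexity, by writing $b=(1-\lambda)a+\lambda c$ with $\lambda=(b-a)/(c-a)\in(0,1)$ and substituting into $\varphi(b)\leq(1-\lambda)\varphi(a)+\lambda\varphi(c)$, then rearranging twice (once by subtracting $\varphi(a)$ and once by subtracting $\varphi(c)$).

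Second, I would establish the existence and representation of $D^-\varphi(s_0)$. Fix $s_0\in\mathrm{int}(I)$ and define
\[
Q(s):=\frac{\varphi(s_0)-\varphi(s)}{s_0-s},\qquad s<s_0,\ s\in I.
\]
Applying the first inequality of the three-slope inequality to any $s_1<s_2<s_0$ shows $Q(s_1)\leq Q(s_2)$, so $Q$ is nondecreasing as $s\uparrow s_0$. Because $s_0\in\mathrm{int}(I)$, there exists $r\in I$ with $r>s_0$, and the second inequality of the three-slope inequality applied with $a=s$, $b=s_0$, $c=r$ yields $Q(s)\leq (\varphi(r)-\varphi(s_0))/(r-s_0)$ uniformly in $s<s_0$. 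Thus $Q$ is bounded above, so its left limit at $s_0$ exists in $\mathbb{R}$ and coincides with $\sup_{s<s_0}Q(s)$, which is exactly the claimed formula for $D^-\varphi(s_0)$.

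Third, the ``moreover'' part is a direct instance of convexity. Given $s<r$ in $I$ and $s\leq t\leq r$, set $\lambda=(t-s)/(r-s)\in[0,1]$ so that $t=(1-\lambda)s+\lambda r$. Convexity gives $\varphi(t)\leq(1-\lambda)\varphi(s)+\lambda\varphi(r)$; subtracting $\varphi(s)$ from both sides produces the stated chord inequality. The only genuinely nontrivial point in the whole argument is the finiteness of the left derivative, which is exactly where the assumption $s_0\in\mathrm{int}(I)$ (rather than $s_0\in I$) is used to supply the upper bound via a point $r>s_0$; everything else is rearrangement of the convexity inequality.
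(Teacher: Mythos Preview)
Your argument is correct and is the standard derivation of these facts from the three-slope inequality. The paper itself does not give a proof of this proposition; it simply cites \cite{HiriartLemarecal1} (Hiriart-Urruty and Lemar\'echal) for this elementary convex-analysis result, so there is nothing to compare against beyond noting that your self-contained treatment is exactly the classical one found in that reference.
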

\section{Robust Kantorovich's Theorem on Newton's Method} \label{SeC:RobKantTheo}
Our goal is to state and prove  a robust  affine invariant  version of Kantorovich's Theorem on Newton's Method for   finding  a zero of a vector field: \begin{equation} \label{eq:Inc}
X(p)=0,
\end{equation}
where $\cal M$ is a Riemannian manifold, $\Omega\subseteq {\cal M}$ an open set and $X:{\Omega}\to T{\cal M}$   a continuously differentiable  vector field.   The most important in our analysis is the robustness,  namely,   we give a  prescribed ball,  around the point  satisfying the Kantorovich's assumptions,  ensuring convergence of the method for any starting point in this ball. Moreover,  we  establish bounds for $Q$-quadratic convergence of  the method which depend  on the  majorant function.    Also, as in \cite{Alvarez2008} and  \cite{LiJinhua2008},  this analysis  allows us   establish existence and local uniqueness of the solution.  For state the theorem we need some definitions. We beginning with the following definition which was introduced in \cite{Alvarez2008}.
\begin{definition}\label{def:g2}
Let $R>0$, $n\in\mathbb{N}\backslash \{0\}$, $p_0\in {\cal M}$ and  ${\cal G}_n(p_0,R)$ be the class of all piecewise geodesic curves $\xi\ :[0,T] \to {\cal M}$  for some  $T>0$  which satisfy the following conditions:
\begin{enumerate}
\item $\xi (0)=p_0$ and the length of $\xi$ is no greater than $R$;
\item there exist $c_0, c_1,\ldots c_{n} \in [0,T]$ with $c_0=0 \leq c_1\leq \ldots \leq  c_n=T$ such that $\xi_{\mid_{[c_0,~c_1]}}$, \ldots $\xi_{\mid_{[c_{n-2}, ~c_{n-1}]}}$ are $n-1$ minimizing geodesics and $\xi_{\mid_{[c_{n-1},~c_n]}}$ is a geodesic.
\end{enumerate}
\end{definition}
\begin{remark}
Since  $\cal M$ is complete,  ${\cal G}_n(p_0,R)$ is nonempty.   Moreover, ${\cal G}_{n}(p_0,R) \subset {\cal G}_{n+1}(p_0,R)$ for all  $n\in\mathbb{N}\backslash \{0\}$.  Note that, in Definition~\ref{def:g2},    ${\cal G}_1(p_0,R)$ is the   class of  all  minimizing  geodesic curves $\xi :[0,T] \to {\cal M}$  with $\xi (0)=p_0$ and the length of $\xi$ is no greater than $R$.
\end{remark}
We also need the following definition  which was  equivalently  stated   in   (3.7) of    \cite{Alvarez2008}, for   ${\cal G}_2(p_0,R)$.
\begin{definition} \label{de:majcon}
Let   $\Omega\subseteq \cal M$ an open set and    $R>0$ a   scalar constanst.  A continuously differentiable $f:[0,\; R)\to \mathbb{R}$  is said to be a  majorant function   at a point  $p_0\in \Omega$ for   a   continuously   differentiable vector field $X:{\Omega}\to T{\cal M}$   with respect to ${\cal G}_n(p_0,R)$  if   $\nabla X(p_0)$  is nonsingular, $B(p_0,R) \subset \Omega$ and
\begin{equation} \label{eq:mc}
\left\|\nabla X(p_0)^{-1}\left[P_{\xi,b,0} \,\nabla X (\xi(b)) P_{\xi,a,b} -  \,P_{\xi,a,0}\nabla X (\xi(a))\right]\right\|
\leq f'\left(\ell[\xi, 0,b] \right)-f'\left(\ell[\xi,0,a]\right),
\end{equation}
 for all $\xi \in {\cal G}_n(p_0,R)$   with $a, b \in \mbox{dom}(\xi)$ and $0\leq a \leq b$. Moreover, $f$ satisfies the  following conditions:
\begin{itemize}
  \item[{\bf h1.}]  $f(0)>0$ and $f'(0)=-1$;
  \item[{\bf h2.}]  $f'$ is convex and strictly increasing;
  \item[{\bf h3.}] $f(t)=0$ for some $t \in (0,R)$.
\end{itemize}
\end{definition}
We also need of the following  condition on the majorant condition $f$  which will be considered to hold
only when explicitly stated
  \begin{itemize}
  \item[{\bf h4.}]  $f(t)<0$ for some $t\in (0,R)$.
  \end{itemize}
  \begin{remark}
  Since $f(0)>0$ and $f$ is continuous then   condition {\bf h4} implies condition {\bf h3}.
  \end{remark}
 The statement of our  main result  is:
\begin{theorem}\label{th:knt2}
Let $\cal M$ be a Riemannian manifold, $\Omega\subseteq {\cal M}$ an open set and   $\bar{\Omega}$ its  closure,  $X:\bar{\Omega} \to T{\cal M}$   a continuous vector field and continuously  differentiable on  $\Omega$,   $R>0$ a scalar constant and $ f:[0,R)\to \mathbb{R}$ a continuously differentiable function. Take $p_0 \in \Omega$.  Suppose that $\nabla X(p_0)$ is nonsingular  and     $f$ is a majorant function for $X$ at  $p_0$ with respect to ${\cal G}_3(p_0,R)$    satisfying  {\bf h4} and the inequality
 \begin{equation}  \label{KH.2}
   \left\|\nabla X(p_0)^{-1}X(p_0) \right\| \leq f(0).
  \end{equation}
Define   $\Gamma:=\sup\{ -f(t) ~:~ t\in[0,R)  \}$. Let $0\leq \rho<  \Gamma/2$  and   $g:[0,R-\rho)\to \mathbb{R}$,
\begin{equation} \label{eq:maj2}
  g(t):=\frac{1}{|f'(\rho)|}[f(t+\rho)+2\rho].
\end{equation}
Then $g$ has a smallest zero $t_{*,\rho}\in (0,R-\rho)$, the sequences  generated by Newton's Method for solving  the equation  $X(p)=0$ and  the equation  $g(t)=0$,  with starting  point $q_0$, for any $q_0\in  B[p_0,  \rho]$, and $t_0=0$, respectively,
  \begin{align}   \label{ns.KT2}
    q_{k+1}=\emph{exp}_{q_k}\left(- \nabla X(q_k)^{-1}X(q_k)\right),  \qquad \qquad    t_{k+1} ={t_k}- \frac{g(t_k)}{g'(t_k)}, \qquad  k=0,1,\ldots\,.
  \end{align}
   are well defined, $\{q_k\}$ is contained in $B(q_0,  t_{*,\rho})$, $\{t_k\}$ is strictly increasing, is contained in   $[0,t_{*,\rho})$ and  converges  to  $t_{*,\rho}$. Moreover,     $\{q_k\}$ and  $\{t_k\}$  satisfy  the inequalities
  \begin{equation}\label{eq:bdct2}
  d(q_{k}, q_{k+1})   \leq  t_{k+1}-t_{k}, \qquad k=0, 1, \ldots\, ,
  \end{equation}
  \begin{equation}\label{eq:qbdct2}
 d(q_{k}, q_{k+1})  \leq   \frac{t_{k+1}-t_{k}}{(t_{k}-t_{k-1})^2}  d(q_{k-1}, q_{k}) ^2\leq  \frac{D^-g'(t_{*,\rho})}{-2g'(t_{*,\rho})}  d(q_{k-1}, q_{k})^2,  \qquad k=1, 2, \ldots\,
  \end{equation}
and    $\{q_k\}$ converges  to $p_*\in B[q_0, t_{*,\rho}]$ such that  $X(p_*)=0$.   Furthermore,  $\{q_k\}$ and  $\{t_k\}$   satisfy the inequalities
\begin{equation}\label{eq:lcr2}
 d(q_{k}, p_*)   \leq  t_{*,\rho}-t_{k}, \qquad \qquad t_{*,\rho}-t_{k+1}\leq \frac{1}{2}( t_{*,\rho}-t_{k}), \qquad k=0,1, \ldots\,,
\end{equation}
 the convergence of $\{q_k\}$ and  $\{t_k\}$ to  $p_*$ and $t_{*,\rho}$, respectively,  are  $Q$-quadratic as follow
 \begin{equation}\label{eq:qcrt2}
\limsup_{k\to \infty}\frac{d(p_{k+1}, p_{*})} {d(q_{k}, p_*)^2}\leq   \frac{D^-g'(t_{*,\rho})}{-2g'(t_{*,\rho})},   \qquad t_{*,\rho}-t_{k+1} \leq \frac{D^-g'(t_{*,\rho})}{-2g'(t_{*,\rho})} ({t_{*,\rho}}-t_{k})^2,  \quad k=0,1, \ldots\,.
  \end{equation}
  and $p_*$ is the unique singularity  of $X$ in $B(p_0,{\bar \tau})$, where ${\bar \tau}\geq t_*$ is defined as
  \[ {\bar \tau}:=\sup\{ t\in [t_*, R)\;:\; f(t)\leq 0\}. \]
\end{theorem}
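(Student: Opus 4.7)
The approach is to reduce Theorem \ref{th:knt2} to the non-robust affine-invariant Kantorovich theorem from Section \ref{SeC:RobKantTheo}, applied at the arbitrary starting point $q_0\in B[p_0,\rho]$ with a shifted, rescaled majorant derived from $f$. The parameters $\rho<\Gamma/2$ and the explicit formula \eqref{eq:maj2} for $g$ are tuned precisely so that the majorant property at $p_0$ transports along a minimizing geodesic from $p_0$ to $q_0$ into a valid majorant at $q_0$, uniformly dominated by $g$ irrespective of the particular $q_0$ chosen.

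The first subtask is to verify that $g$ itself satisfies \textbf{h1}--\textbf{h4} and hence admits a smallest positive zero $t_{*,\rho}\in(0,R-\rho)$. Since $f'$ is convex, strictly increasing, and equals $-1$ at $0$, the elementary bound $f(t)\geq f(0)-t$ forces the minimizer $t_m$ of $f$ to satisfy $t_m\geq f(0)+\Gamma>\Gamma>2\rho$; consequently $f'(\rho)<0$ and $g'(0)=-1$, while the remaining structural properties of $g$ follow directly from those of $f$ and the solvability of $g(t)=0$ reduces to $2\rho<\Gamma$. Now fix $q_0\in B[p_0,\rho]$, set $\rho_0:=d(p_0,q_0)\leq\rho$, and let $\gamma:[0,\rho_0]\to\mathcal{M}$ be a normalized minimizing geodesic from $p_0$ to $q_0$. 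Applying \eqref{eq:mc} to $\gamma\in\mathcal{G}_3(p_0,R)$ with $a=0,b=\rho_0$ together with Banach's Lemma (Lemma \ref{lem:ban}) yields the invertibility of $\nabla X(q_0)$ and a bound of order $1/|f'(\rho_0)|$ on the pulled-back inverse $\nabla X(p_0)^{-1}P_{\gamma,\rho_0,0}\nabla X(q_0)P_{\gamma,0,\rho_0}$. Combining this with Lemma \ref{le:TFC} along $\gamma$, and using \eqref{eq:mc} to estimate the integrand via the identity $P_{\gamma,s,0}\gamma'(s)=\gamma'(0)$, gives the Kantorovich bound $\|\nabla X(q_0)^{-1}X(q_0)\|\leq [f(\rho_0)+2\rho_0]/|f'(\rho_0)|$. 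For any $\xi\in\mathcal{G}_2(q_0,R-\rho_0)$ the concatenation with $\gamma$ produces $\tilde\xi\in\mathcal{G}_3(p_0,R)$ of length $\leq R$; plugging $\tilde\xi$ into \eqref{eq:mc}, decomposing parallel transports via \eqref{eq:ptp}, and factoring through $\nabla X(p_0)^{-1}$ yields the majorant condition at $q_0$ for the natural shifted function $h_{\rho_0}(t):=[f(t+\rho_0)+2\rho_0]/|f'(\rho_0)|$.

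The final and technically most delicate step is to transfer the conclusions obtained via $h_{\rho_0}$ to the uniform majorant $g=h_\rho$ appearing in the statement. Exploiting $\rho_0\leq\rho$ together with the monotonicity and convexity of $f'$ (so that second-difference increments of $f'$ are non-decreasing in the shift), I would show that $t_{*,\rho_0}\leq t_{*,\rho}$ and that Newton's scalar sequence for $h_{\rho_0}$ is dominated termwise by the sequence $\{t_k\}$ generated by $g$; the non-robust Kantorovich theorem applied at $q_0$ with majorant $h_{\rho_0}$ then delivers $\{q_k\}\subset B(q_0,t_{*,\rho_0})\subset B(q_0,t_{*,\rho})$ together with the estimates \eqref{eq:bdct2}--\eqref{eq:qcrt2} rephrased in terms of $g$ and $\{t_k\}$. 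This comparison between the two scalar majorants is the principal obstacle: getting the right monotonicity of the Newton iterates under the shift parameter is what ultimately yields a uniform ball of convergence. Local uniqueness of $p_*$ in $B(p_0,\bar\tau)$ is finally obtained by joining $p_*$ to any other putative zero of $X$ in this ball by a minimizing geodesic and using \eqref{eq:mc} together with $f\leq 0$ throughout $[t_*,\bar\tau]$ to force the two zeros to coincide.
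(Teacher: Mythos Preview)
Your overall reduction strategy---transport the majorant condition from $p_0$ to an arbitrary $q_0\in B[p_0,\rho]$ via a minimizing geodesic $\gamma$, then invoke the non-robust Theorem~\ref{th:knt1}---is exactly the paper's plan. Where you diverge is in the choice of majorant at $q_0$. You pass through the sharp intermediate majorant $h_{\rho_0}(t)=[f(t+\rho_0)+2\rho_0]/|f'(\rho_0)|$, apply Theorem~\ref{th:knt1} with $h_{\rho_0}$, and then propose to compare the resulting scalar Newton iterates termwise with those of $g=h_\rho$. You correctly flag this comparison as the ``principal obstacle,'' and it is indeed delicate: you would need not merely $s_k\le t_k$ but the increment bound $s_{k+1}-s_k\le t_{k+1}-t_k$ in order to recover \eqref{eq:bdct2}, and the second-order ratios for \eqref{eq:qbdct2}. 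This monotonicity of Newton increments in the shift parameter is not obvious and you have not supplied an argument for it.

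The paper sidesteps this obstacle entirely. In Proposition~\ref{pr:ar2} it is shown that $g$ itself (with the fixed $\rho$, not $\rho_0$) is already a majorant for $X$ at $q_0$ with respect to $\mathcal{G}_2(q_0,R-\rho)$. The two ingredients are: (i) since $f'$ is convex, $s\mapsto f'(t+s)-f'(s)$ is nondecreasing, so the right-hand side of \eqref{eq:mc} with shift $\rho_0$ is dominated by the same expression with shift $\rho$; (ii) since $|f'|$ is decreasing on $[0,\bar t)$, the Banach bound $1/|f'(\rho_0)|$ may be relaxed to $1/|f'(\rho)|$. For the Kantorovich bound one observes (Proposition~\ref{cor:majorX} and the monotonicity of $t\mapsto f(t)+2t$, which follows from $f'\ge -1$) that $\|\nabla X(q_0)^{-1}X(q_0)\|\le [f(\rho_0)+2\rho_0]/|f'(\rho)|\le g(0)$. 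With $g$ established as a majorant at $q_0$ satisfying \eqref{KH.1} and \textbf{h4}, Theorem~\ref{th:knt1} applies directly at $q_0$ with majorant $g$, and \emph{all} the conclusions \eqref{eq:bdct2}--\eqref{eq:qcrt2} come out already phrased in terms of $g$ and $\{t_k\}$, with no scalar-sequence comparison needed. In short, your ``principal obstacle'' is an artefact of working with $h_{\rho_0}$; replacing it by $g$ at the outset removes it.
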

To prove the above theorem we need some previous results. First, in the next section, we prove a particular instance of this theorem, and then, in the Section~\ref{Sec:proof} we prove Theorem~\ref{th:knt2}.
\section{Kantorovich's Theorem on Newton's Method} \label{Sec:KantTheo}
In this section we will prove an affine invariant  version of Kantorovich's Theorem on Newton's Method, it is a particular instance of Theorem~\ref{th:knt2}, namely,  the case $ \rho=0$.   We will use this theorem for proving Theorem~\ref{th:knt2}. The main results of this section are the bounds, depending  on the majorant function,  for the  $Q$-quadratic convergence of the Newton's Method, which gives an additional contribution for improving the results of   Alvarez et al. in \cite{Alvarez2008}, Ferreira and Svaiter in \cite{FerreiraSvaiter2002} and  Li and Wang in \cite{LiJinhua2008}.
\begin{theorem}\label{th:knt1}
Let $\cal M$ be a Riemannian manifold, $\Omega\subseteq {\cal M}$ an open set and   $\bar{\Omega}$ its  closure,  $X:\bar{\Omega} \to T{\cal M}$   a continuous vector field and continuously  differentiable on  $\Omega$,   $R>0$ a scalar constant and $ f:[0,R)\to \mathbb{R}$ a continuously differentiable function. Take $p_0 \in \Omega$.  Suppose that $\nabla X(p_0)$ is nonsingular  and     $f$ is a majorant function for $X$ at  $p_0$ with respect to ${\cal G}_2(p_0,R)$  satisfying the inequality
 \begin{equation}  \label{KH.1}
   \left\|\nabla X(p_0)^{-1}X(p_0) \right\| \leq f(0).
  \end{equation}
Then $f$ has a smallest zero $t_{*}\in (0,R)$, the sequences  generated by Newton's Method for solving  the equations  $X(p)=0$ and    $f(t)=0$,  with starting  point $p_0$  and $t_0=0$, respectively,
  \begin{align}   \label{ns.KT1}
    p_{k+1}=\emph{exp}_{p_k}\left(- \nabla X(p_k)^{-1}X(p_k)\right),  \quad    t_{k+1} ={t_k}- \frac{f(t_k)}{f'(t_k)}, \qquad  k=0,1,\ldots\,.
  \end{align}   are well defined, $\{p_k\}$ is contained in $B(p_0,  t_*)$, $\{t_k\}$ is strictly increasing, is contained in   $[0,t_*)$ and  converge  to  $t_*$ and   satisfy  the inequalities
\begin{equation}\label{eq:bd11}
  d(p_{k+1}, p_{k})   \leq  t_{k+1}-t_{k} , \qquad \qquad d(p_{k+1}, p_{k})  \leq   \frac{t_{k+1}-t_{k}}{(t_{k}-t_{k-1})^2}  d(p_{k}, p_{k-1})^2,
  \end{equation}
 for all \( k=0, 1, \ldots\, , \) and \( k=1,2, \ldots\, \), respectively. Moreover,    $\{p_k\}$ converge  to  $p_*\in B[p_0, t_*]$ such that  $X(p_*)=0,$
\begin{equation}\label{eq:lc11}
 d(p_*, p_{k})   \leq  t_*-t_{k}, \qquad \qquad t_*-t_{k+1}\leq \frac{1}{2}( t_*-t_{k}), \qquad k=0,1, \ldots\,
\end{equation}
and, therefore,    $\{t_{k}\}$  converges $Q$-linearly to $t_*$ and   $\{p_k\}$   converge $R$-linearly to $p_*$. If, additionally, $f$ satisfies {\bf h4} then the following inequalities hold:
\begin{equation}\label{eq:qcct1}
 d(p_{k+1}, p_{k}) \leq    \frac{D^-f'(t_*)}{-2f'(t_*)}   d(p_{k}, p_{k-1}) ^2,  \qquad t_{k+1}-t_{k} \leq \frac{D^-f'(t_*)}{-2f'(t_*)} ({t_k}-t_{k-1})^2, \qquad k=1,2, \ldots\,,
  \end{equation}
and, as  a consequence,    $\{p_k\}$ and  $\{t_k\}$  converge $Q$-quadratically
 to $p_*$ and $t_*$, respectively, as follow
 \begin{equation}\label{eq:qcs1}
\limsup_{k\to \infty}\frac{ d(p_*, p_{k+1})  } { d(p_*, p_{k})^2}\leq   \frac{D^-f'(t_*)}{-2f'(t_*)}, \qquad  \qquad t_{*}-t_{k+1} \leq \frac{D^-f'(t_*)}{-2f'(t_*)} ({t_*}-t_{k})^2,  \quad k=0,1, \ldots\,,
  \end{equation}
  and $p_*$ is the unique singularity  of $X$ in $B(p_0,{\bar \tau})$, where ${\bar \tau}\geq t_*$ is defined as
  \[ {\bar \tau}:=\sup\{ t\in [t_*, R)\;:\; f(t)\leq 0\}. \]
\end{theorem}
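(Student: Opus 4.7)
The plan is to adapt the majorant-function technique of Ferreira--Svaiter, which reduces the analysis of Newton's method for $X$ on $\mathcal{M}$ to the analysis of the scalar Newton iteration on $f$ and makes $\{t_k\}$ majorize $\{p_k\}$ step by step. I would begin by extracting the structural properties of $f$ from \textbf{h1}--\textbf{h3}: $f(0)>0$, $f'(0)=-1<0$, and $f'$ strictly increasing force $f$ to be strictly convex and strictly decreasing on an initial interval until its smallest root $t_{\ast}\in(0,R)$, with $f'(t)\in(-1,0)$ for every $t\in(0,t_{\ast})$. The scalar Newton iteration $t_{k+1}=t_k-f(t_k)/f'(t_k)$ starting at $t_0=0$ is therefore well defined and, by standard convex analysis arguments, strictly increasing, contained in $[0,t_{\ast})$, and convergent to $t_{\ast}$; convexity of $f$ then gives the halving inequality $t_{\ast}-t_{k+1}\le(t_{\ast}-t_k)/2$.

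The central ingredient is a \emph{one-step bound} linking the Newton operator on $\mathcal{M}$ to the scalar iteration. For any $p$ reached from $p_0$ by a minimizing geodesic $\xi\in\mathcal{G}_1(p_0,R)$ of length $t<t_{\ast}$, I would apply \eqref{eq:mc} with $a=0$, $b=t$ to obtain $\|\nabla X(p_0)^{-1}P_{\xi,t,0}\nabla X(p)P_{\xi,0,t}-I\|\le f'(t)+1<1$, and then invoke Lemma~\ref{lem:ban} to conclude that $\nabla X(p)$ is invertible with the affine-invariant bound $\|\nabla X(p)^{-1}P_{\xi,0,t}\nabla X(p_0)\|\le -1/f'(t)$. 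To bound the length of the Newton step at $p_k$ I would combine Lemma~\ref{le:TFC} along the previous Newton-step geodesic from $p_{k-1}$ to $p_k$ with \eqref{eq:mc} applied to the piecewise geodesic in $\mathcal{G}_2(p_0,R)$ formed by a minimizing geodesic from $p_0$ to $p_{k-1}$ followed by that Newton step. This expresses $\|\nabla X(p_0)^{-1}X(p_k)\|$ in terms of the majorant residual $f(t_k)-f(t_{k-1})-f'(t_{k-1})(t_k-t_{k-1})$, and composing with the invertibility bound yields $d(p_k,p_{k+1})\le t_{k+1}-t_k$. Using convexity of $f'$ to control the same residual by $\bigl[f'(t_k)-f'(t_{k-1})\bigr](t_k-t_{k-1})/2$ and recalling that $t_{k+1}-t_k=-f(t_k)/f'(t_k)$ gives the sharper quadratic estimate $d(p_k,p_{k+1})\le\frac{t_{k+1}-t_k}{(t_k-t_{k-1})^2}\,d(p_{k-1},p_k)^2$.

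An induction on $k$ propagates these two inequalities simultaneously with the invariant $d(p_0,p_k)\le t_k<t_{\ast}$, which keeps every piecewise geodesic used along the way inside the majorant domain and of class $\mathcal{G}_2(p_0,R)$. Summability of $t_{k+1}-t_k$ then forces $\{p_k\}$ to be Cauchy; completeness of $\mathcal{M}$ gives $p_k\to p_{\ast}\in B[p_0,t_{\ast}]$, and passing to the limit in \eqref{ns.KT1} via the uniform bound on $\|\nabla X(p_k)^{-1}\|$ and the continuity of $X$ produces $X(p_{\ast})=0$; letting $j\to\infty$ in $d(p_k,p_{k+j})\le t_{k+j}-t_k$ yields \eqref{eq:lc11}, from which the $R$-linear rate follows. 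Under \textbf{h4} I would first verify that $f'(t_{\ast})<0$ strictly, since otherwise convexity of $f$ would force $f\ge 0$ past $t_{\ast}$, contradicting \textbf{h4}; the constant $D^-f'(t_{\ast})/(-2f'(t_{\ast}))$ then arises from Proposition~\ref{pr:conv.aux1} applied to the scalar iteration and is transferred to $\{p_k\}$ through the one-step bound. Uniqueness of $p_{\ast}$ in $B(p_0,\bar\tau)$ is obtained by applying Lemma~\ref{le:TFC} along a minimizing geodesic joining $p_{\ast}$ to a hypothetical second zero and deriving a contradiction with the sign of $f$ on $[t_{\ast},\bar\tau)$ via \eqref{eq:mc}. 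I expect the main obstacle to be producing the sharper quadratic one-step bound cleanly from the parallel-transport form of \eqref{eq:mc} and checking that every piecewise geodesic used in the induction genuinely belongs to $\mathcal{G}_2(p_0,R)$; once that is in place, all stated estimates follow uniformly from the scalar analysis of $f$.
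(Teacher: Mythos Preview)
Your overall strategy matches the paper's: analyze the scalar Newton iteration on $f$, obtain invertibility of $\nabla X$ via Banach's lemma and the majorant inequality, bound the linearization error along the piecewise geodesic in $\mathcal{G}_2(p_0,R)$ built from a minimizing segment $p_0\to p_{k-1}$ followed by the Newton step, and then run an induction. The paper packages the induction through the invariant sets $K(t)=\{p:d(p_0,p)\le t,\ \|\nabla X(p)^{-1}X(p)\|\le -f(t)/f'(t)\}$ and the inclusion $N_X(K(t))\subset K(n_f(t))$, but this is just a cleaner way to record the same invariant you carry.

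There are, however, two genuine gaps.

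\emph{Showing $X(p_*)=0$.} You propose to use a uniform bound on $\|\nabla X(p_k)^{-1}\|$. That is the wrong direction: what you actually control is $\|\nabla X(p_k)^{-1}X(p_k)\|\le t_{k+1}-t_k\to 0$, and to turn this into $X(p_k)\to 0$ you need an upper bound on $\|\nabla X(p_k)\|$, not on its inverse. Moreover, the only bound you have on the inverse is $\|\nabla X(p_k)^{-1}P\nabla X(p_0)\|\le -1/f'(t_k)$, which blows up as $t_k\to t_*$ when \textbf{h4} fails and $f'(t_*)=0$. The paper fills this with a separate estimate (its Proposition~\ref{l:norm-X'(p)}): from the majorant inequality along a minimizing geodesic $p_0\to p$ one gets $\|\nabla X(p_0)^{-1}P\nabla X(p)\|\le 2+f'(d(p_0,p))$, hence $\|\nabla X(p_k)\|\le \|\nabla X(p_0)\|(2+f'(t_k))$, which is bounded and yields $\|X(p_k)\|\to 0$.

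\emph{Uniqueness.} Your plan is to apply Lemma~\ref{le:TFC} along a minimizing geodesic from $p_*$ to a second zero and contradict the sign of $f$ on $[t_*,\bar\tau)$. This does not fit the framework: the majorant condition \eqref{eq:mc} is anchored at $p_0$ and is stated for curves in $\mathcal{G}_2(p_0,R)$, not for curves starting at $p_*$. The paper's argument instead joins $p_0$ to the hypothetical zero $q$ and proves (its Lemma~\ref{l:usp}) the lower bound $-f(d(p_0,q))\le \|\nabla X(p_0)^{-1}P_{\xi,1,0}X(q)\|$; setting $X(q)=0$ forces $f(d(p_0,q))\ge 0$, which excludes zeros with $t_*<d(p_0,q)<\bar\tau$. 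But this sign argument says nothing about a possible second zero in $B[p_0,t_*]$, where $f\ge 0$. Ruling those out is the delicate part: the paper spends Lemmas~\ref{le:iterteta}--\ref{le:iguald} and Corollary~\ref{cor:iguald} showing that any zero $q_*\in B[p_0,t_*]$ forces a rigid relation $t_k+d(p_k,q_*)=t_*$ along the Newton sequence, from which $d(p_0,q_*)=t_*$ and ultimately $q_*=p_*$ follow. You will need an argument of this kind; the one-line contradiction you sketch is not enough.
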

Henceforward  we assume that all assumptions in above theorem hold.
In this section, we will prove all the statements in Theorem~\ref{th:knt1} regarding to the majorant function and  the real sequence $\{t_k\}$ associated. The main relationships between the majorant function and the  vector field  will be also established.
\subsection{The majorant function}
In this subsection we will study the majorant function $f$ and prove all
results regarding only the real sequence $\{t_k\}$ defined by Newton's method applied to the majorant function $f$. Define
\begin{equation} \label{eq:def.bart}
  \bar{t}:=\sup \left\{t\in [0,R): f'(t)<0 \right\}\;.
\end{equation}
\begin{proposition} \label{pr:maj.f} The majorant function $f$  has a smallest root $t_*\in  (0,R)$, is strictly convex and
\begin{equation}  \label{eq:n.f}
   f(t)>0, \quad f'(t)<0, \qquad t<t-f(t)/f'(t)< t_*,  \qquad\qquad  \forall ~ t\in [0,t^*) .
\end{equation}
Moreover, $f'(t_*)\leqslant 0$ and
  \begin{equation}  \label{eq:pr.1.b}
     f'(t_*)<0\iff \exists\; t\in (t_*,R); \;f(t)< 0 .
  \end{equation}
  If, additionally, $f$ satisfies condition  {\bf h4} then the following statements  hold:
   \begin{itemize}
  \item[i)] $f'(t)<0$ for any $t\in [0,\bar{t}\,)$;
  \item[ii)] $0< t_* < \bar t\leq R$;
  \item[iii)] $0< \Gamma <\bar t$, where $\Gamma:=-\lim_{t\to \bar t_{-}} f(t)$.
    \item[iv)]  If  $0\leq \rho<\Gamma/2$  then $\rho<\bar t/2 <\bar t$ and  $f'(\rho)<0$.
\end{itemize}
\end{proposition}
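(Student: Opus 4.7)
The plan is to derive every clause of the proposition from the defining properties \textbf{h1}--\textbf{h3} (with \textbf{h4} engaged only for the last four items), exploiting that $f'$ is strictly increasing (hence $f$ strictly convex) and that $f'(0)=-1$. Strict convexity is immediate from \textbf{h2}. For the smallest zero: by \textbf{h3} the set $\{t\in(0,R):f(t)=0\}$ is nonempty; since $f(0)>0$ and $f$ is continuous, this set is closed and bounded away from $0$, so its infimum $t_*$ lies in $(0,R)$, is attained, and $f>0$ on $[0,t_*)$ by the intermediate value theorem.

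For the inequalities in \eqref{eq:n.f}, fix $t\in[0,t_*)$ and apply the mean value theorem on $[t,t_*]$ to produce $c\in(t,t_*)$ with $f'(c)=-f(t)/(t_*-t)<0$; monotonicity of $f'$ then gives $f'(t)\le f'(c)<0$. The inequality $t<t-f(t)/f'(t)$ is then immediate from the signs of $f(t)$ and $f'(t)$, and for the upper bound I would use strict convexity of $f$: the tangent line at $t$ lies strictly below the graph of $f$ at $t_*$, whence $0=f(t_*)>f(t)+f'(t)(t_*-t)$; dividing by $f'(t)<0$ reverses the inequality to $t_*-t>-f(t)/f'(t)$. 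Taking the limit $t\to t_*^-$ in $f'(t)<0$ yields $f'(t_*)\le 0$. For the equivalence \eqref{eq:pr.1.b}, the implication $(\Leftarrow)$ follows by applying the mean value theorem on $[t_*,t]$ whenever $f(t)<0$ and then invoking monotonicity of $f'$; the implication $(\Rightarrow)$ comes from a first-order Taylor expansion at $t_*$, since $f'(t_*)<0$ forces $f$ to be negative immediately to the right of $t_*$.

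For the assertions under \textbf{h4}: (i) follows from the strict monotonicity of $f'$, which guarantees that $\{t:f'(t)<0\}$ is an initial segment of $[0,R)$, so its supremum $\bar t$ witnesses $f'(t)<0$ on $[0,\bar t)$; (ii) combines (i) with the equivalence just proved, since \textbf{h4} supplies some $t>t_*$ with $f(t)<0$ and hence $f'(t_*)<0$, placing $t_*<\bar t$. For (iii), first observe that $f$ is strictly decreasing on $[0,\bar t)$ by (i), so $\lim_{t\to\bar t^-}f(t)$ exists in $\mathbb{R}\cup\{-\infty\}$; the tangent-line bound at $0$, $f(t)\ge f(0)+f'(0)t=f(0)-t$, shows that this limit is at least $f(0)-\bar t$, hence finite. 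Taking negatives gives $\Gamma\le\bar t-f(0)<\bar t$ because $f(0)>0$, while $\Gamma>0$ follows because by (ii) any $t\in(t_*,\bar t)$ satisfies $f(t)<0$, and the decreasing behavior yields $\Gamma\ge-f(t)>0$. Finally (iv) is a direct corollary of (iii) and (i): $\rho<\Gamma/2<\bar t/2<\bar t$, so $f'(\rho)<0$.

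I expect the main delicate point to be (iii), where one must both verify that the one-sided limit is finite (which is where the convexity tangent bound at $0$ enters, a place it is not needed elsewhere) and extract positivity of $\Gamma$ by chaining together (i), (ii) and the monotonicity of $f$ on $[0,\bar t)$. Everything else is a mechanical assembly of monotonicity of $f'$, the mean value theorem, and strict convexity.
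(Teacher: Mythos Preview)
Your proof is correct. The paper itself does not give a direct argument here; it simply refers the reader to Propositions~2.3 and~5.2 of \cite{FerreiraSvaiter2012} and Proposition~3 of \cite{FerreiraSvaiter2002}. Your self-contained derivation from \textbf{h1}--\textbf{h4} is the standard one and is essentially what those references contain: strict convexity from \textbf{h2}; existence of $t_*$ and $f>0$ on $[0,t_*)$ via the intermediate value theorem; the sign of $f'$ on $[0,t_*)$ via the mean value theorem together with monotonicity of $f'$; the Newton-step upper bound $t-f(t)/f'(t)<t_*$ via the strict tangent-line inequality; and the equivalence \eqref{eq:pr.1.b} by MVT in one direction and a first-order expansion in the other. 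Your treatment of~(iii) is the one place requiring a little care, and you handle it correctly: the tangent line at $0$ gives $f(t)\ge f(0)-t$, hence $\Gamma\le \bar t - f(0)<\bar t$, while $\Gamma>0$ follows from $t_*<\bar t$ (part~(ii)) and the strict decrease of $f$ on $[0,\bar t)$. Item~(iv) then drops out immediately. Nothing is missing.
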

\begin{proof}
See    Propositions 2.3 and 5.2 of \cite{FerreiraSvaiter2012} and Proposition 3 of \cite{FerreiraSvaiter2002}.
\end{proof}
In view of the second inequality in (\ref{eq:n.f}), Newton iteration is  well defined in $[0,t_*)$. Let us call it  $n_f:[0,t_*)\to \mathbb{R}$,
\begin{equation} \label{eq:n.f.2}
n_f(t):=t-f(t)/f'(t).
\end{equation}
\begin{proposition} \label{pr:2}
  Newton iteration  $n_f$ maps $[0,t^*)$ into $[0,t^*)$ and there hold:
  \begin{equation}
  t < n_f(t), \qquad t_*-n_f(t) \leq \frac{1}{2}(t_*-t),\qquad \forall ~ t \in [0,t_*).
  \end{equation}
  If $f$ also satisfies {\bf(h4)}, i.e., $f'(t_*) < 0$, then
  \begin{equation}
  t_*-n_f(t) \leq  \frac{D^-f'(t_*)}{-2f'(t_*)}(t_*-t)^2, \qquad \forall ~t \in [0,t^*).
  \end{equation}
\end{proposition}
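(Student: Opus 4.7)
The plan is to base everything on one clean identity for $t_{*}-n_{f}(t)$ obtained from the Fundamental Theorem of Calculus, and then apply the convexity of $f'$ (hypothesis \textbf{h2}) via Proposition~\ref{pr:conv.aux1}. The first assertions, namely $n_{f}([0,t_{*}))\subset[0,t_{*})$ and $t<n_{f}(t)$, are immediate from \eqref{eq:n.f} in Proposition~\ref{pr:maj.f}, so only the two quantitative bounds need work.

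The key step is the identity
\[
(-f'(t))\bigl(t_{*}-n_{f}(t)\bigr)=\int_{t}^{t_{*}}\bigl(f'(s)-f'(t)\bigr)\,\mathrm{d}s,\qquad t\in[0,t_{*}).
\]
I would derive this by multiplying $n_{f}(t)=t-f(t)/f'(t)$ through by $-f'(t)>0$ to obtain $(-f'(t))(t_{*}-n_{f}(t))=-f'(t)(t_{*}-t)-f(t)$, and then observing that $f(t_{*})=0$ together with the Fundamental Theorem of Calculus gives $f(t)+f'(t)(t_{*}-t)=-\int_{t}^{t_{*}}(f'(s)-f'(t))\,\mathrm{d}s$.

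For the factor $1/2$ bound, I would apply Proposition~\ref{pr:conv.aux1} with $\varphi=f'$ on the triple $t\leq s\leq t_{*}$, obtaining $f'(s)-f'(t)\leq[f'(t_{*})-f'(t)](s-t)/(t_{*}-t)$, and integrate in $s$ to get
\[
\int_{t}^{t_{*}}\bigl(f'(s)-f'(t)\bigr)\,\mathrm{d}s\leq\frac{\bigl(f'(t_{*})-f'(t)\bigr)(t_{*}-t)}{2}.
\]
Dividing the key identity by $-f'(t)$ and using that $f'(t_{*})\leq 0$ and $f'(t)<0$ imply $[f'(t_{*})-f'(t)]/[-f'(t)]=1-f'(t_{*})/f'(t)\leq 1$ then yields $t_{*}-n_{f}(t)\leq(t_{*}-t)/2$.

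For the quadratic bound under \textbf{h4}, I would use the same integral estimate but then invoke the characterization of the left derivative in Proposition~\ref{pr:conv.aux1}, which gives $f'(t_{*})-f'(t)\leq D^{-}f'(t_{*})(t_{*}-t)$ for every $t<t_{*}$. Substituting produces $t_{*}-n_{f}(t)\leq D^{-}f'(t_{*})(t_{*}-t)^{2}/(-2f'(t))$, and, since \textbf{h4} together with Proposition~\ref{pr:maj.f}\,\eqref{eq:pr.1.b} yields $f'(t_{*})<0$ while $f'$ is increasing so $-f'(t)\geq-f'(t_{*})>0$, replacing $f'(t)$ by $f'(t_{*})$ in the denominator gives the claimed bound. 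No step looks serious; the only subtle point is recognizing the correct one-sided-derivative estimate at the very end to convert the integral bound into a term involving $D^{-}f'(t_{*})$ rather than $f''$, since $f$ is only assumed $C^{1}$ with $f'$ convex.
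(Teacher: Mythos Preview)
Your argument is correct: the integral identity $(-f'(t))(t_{*}-n_{f}(t))=\int_{t}^{t_{*}}(f'(s)-f'(t))\,\mathrm{d}s$, the convexity estimate from Proposition~\ref{pr:conv.aux1}, and the monotonicity $-f'(t)\geq -f'(t_{*})>0$ under \textbf{h4} combine exactly as you describe to give both bounds. The paper itself does not supply a proof here but simply refers to Proposition~4 of \cite{FerreiraSvaiter2009}, so your write-up in fact fills in what the paper leaves to the literature; the argument in that reference proceeds along the same lines you outline.
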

\begin{proof}  See    Proposition 4 of \cite{FerreiraSvaiter2009}.
\end{proof}
The next two results follow from above proposition.
\begin{corollary} \label{cr:kanttauk}  Take any $\tau_0 \in [0,t_*)$ and define, inductively, $\tau_{k+1}=n_f(\tau_k)$, $k=0,1,... .$ The sequence $\{\tau_k\}$ is well defined, is strictly increasing, is contained in $[0,t_*)$ and converges $Q$-linearly to $t_*$  as follows
 \begin{equation*}
 t_*-\tau_{k+1}\leq \frac{1}{2}( t_*-\tau_{k}), \qquad k=0,1, \ldots\,
\end{equation*}
\end{corollary}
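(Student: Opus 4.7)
The plan is a straightforward induction that leverages Proposition~\ref{pr:2}, since all of the hard analytic work (the invariance of $[0,t_*)$ under $n_f$, the strict increase, and the contraction estimate toward $t_*$) has already been packaged there.

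First I would check that $\{\tau_k\}$ is well defined and stays in $[0,t_*)$ by induction on $k$. The base case $\tau_0\in[0,t_*)$ is an assumption. For the inductive step, if $\tau_k\in[0,t_*)$, then Proposition~\ref{pr:2} tells me that $n_f$ maps $[0,t_*)$ into itself, so $\tau_{k+1}=n_f(\tau_k)\in[0,t_*)$; in particular the formula $\tau_{k+1}=\tau_k-f(\tau_k)/f'(\tau_k)$ makes sense (by \eqref{eq:n.f} we have $f'(\tau_k)<0$). The same proposition gives $\tau_k<n_f(\tau_k)=\tau_{k+1}$, so the sequence is strictly increasing.

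Next I would read off the $Q$-linear estimate $t_*-\tau_{k+1}\leq \tfrac12(t_*-\tau_k)$ directly from the second inequality of Proposition~\ref{pr:2} applied at $t=\tau_k\in[0,t_*)$. Iterating this inequality yields $0\leq t_*-\tau_k\leq 2^{-k}(t_*-\tau_0)$, which forces $\tau_k\to t_*$ and simultaneously establishes the claimed rate.

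There is really no obstacle here: the corollary is an immediate corollary of Proposition~\ref{pr:2}, and the only thing to be slightly careful about is invoking the ``$n_f$ maps $[0,t_*)$ into $[0,t_*)$'' clause at each inductive step so that the next Newton iterate is legal. No use of hypothesis \textbf{h4} is required, since the stated conclusion only involves the $Q$-linear (not $Q$-quadratic) bound.
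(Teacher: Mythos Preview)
Your proposal is correct and matches the paper's approach exactly: the paper presents this corollary without a separate proof, stating only that it follows from Proposition~\ref{pr:2}, and your write-up is precisely the routine induction that unpacks that implication.
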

In particular,  the definition \eqref{ns.KT1} of $\{t_k\}$ in Theorem~\ref{th:knt1} is equivalent to the following one
\begin{equation}\label{eq:tknk}
  t_0=0,\quad t_{k+1}=n_f(t_k), \qquad k=0,1,\ldots\, .
\end{equation}
and there holds
\begin{corollary} \label{cr:kanttk}  The sequence $\{t_k\}$  is well defined, is strictly increasing, is contained in $[0,t_*)$ and converges $Q$-linearly to $t_*$  as follows
 \begin{equation*}
 t_*-t_{k+1}\leq \frac{1}{2}( t_*-t_{k}), \qquad k=0,1, \ldots\,
\end{equation*}
If $f$ also satisfies  {\bf h4}, then   the following  inequality  holds
\begin{equation}\label{eq:qcct}
 t_{k+1}-t_{k} \leq \frac{D^-f'(t_*)}{-2f'(t_*)} ({t_k}-t_{k-1})^2, \qquad k=1,2, \ldots\,,
  \end{equation}
and, as  a consequence,   $\{t_k\}$ converges  $Q$-quadratically to $t_*$   as follow
 \begin{equation}\label{eq:qcsk}
 t_{*}-t_{k+1} \leq \frac{D^-f'(t_*)}{-2f'(t_*)} ({t_*}-t_{k})^2,  \quad k=0,1, \ldots\,,
  \end{equation}
\end{corollary}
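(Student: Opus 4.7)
The plan is to derive Corollary~\ref{cr:kanttk} almost entirely from Corollary~\ref{cr:kanttauk} and Proposition~\ref{pr:2}, with a short convexity calculation supplying the consecutive-difference bound \eqref{eq:qcct}. Since {\bf h1} gives $f(0) > 0$ and $t_* > 0$ is the smallest zero of $f$, we have $0 \in [0, t_*)$, and the recursion \eqref{eq:tknk} defining $\{t_k\}$ matches the one in Corollary~\ref{cr:kanttauk} with $\tau_0 = 0$. I would therefore invoke that corollary directly to obtain well-definedness, strict monotonicity, containment in $[0, t_*)$, convergence to $t_*$, and the $Q$-linear rate $t_* - t_{k+1} \leq (t_* - t_k)/2$. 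Under the additional assumption {\bf h4}, the $Q$-quadratic estimate \eqref{eq:qcsk} is then immediate from the second bound in Proposition~\ref{pr:2} applied at $t = t_k$.

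For the remaining inequality \eqref{eq:qcct}, the plan is to exploit the Newton identity $f(t_{k-1}) + f'(t_{k-1})(t_k - t_{k-1}) = 0$ to write
\[
f(t_k) \;=\; f(t_k) - f(t_{k-1}) - f'(t_{k-1})(t_k - t_{k-1}) \;=\; \int_{t_{k-1}}^{t_k} \bigl[f'(s) - f'(t_{k-1})\bigr]\, ds.
\]
Applying the chord inequality for the convex function $f'$ on the interval $[t_{k-1}, t_k]$ gives $f'(s) - f'(t_{k-1}) \leq \tfrac{s - t_{k-1}}{t_k - t_{k-1}}[f'(t_k) - f'(t_{k-1})]$, and integration yields $f(t_k) \leq \tfrac{1}{2}(t_k - t_{k-1})[f'(t_k) - f'(t_{k-1})]$. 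The secant-slope monotonicity given by the last clause of Proposition~\ref{pr:conv.aux1}, applied to $\varphi = f'$ at the points $t_{k-1} < t_k \leq t_*$, bounds $[f'(t_k) - f'(t_{k-1})]/(t_k - t_{k-1})$ by $D^- f'(t_*)$; meanwhile, strict monotonicity of $f'$ combined with {\bf h4} forces $-f'(t_k) \geq -f'(t_*) > 0$. Dividing by $-f'(t_k)$ and using the identity $t_{k+1} - t_k = -f(t_k)/f'(t_k)$ then produces exactly \eqref{eq:qcct}.

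The main obstacle, though a minor one, is the convexity bookkeeping in this final step: combining the chord inequality \emph{inside} the integral with the secant-slope monotonicity that passes the slope up to the limit point $t_*$, so that the constant in the bound is precisely $D^- f'(t_*)/[-2 f'(t_*)]$. Everything else is a direct reading of the earlier results, since both the $Q$-linear statement and the $Q$-quadratic statement for $t_* - t_{k+1}$ are already encoded in Proposition~\ref{pr:2} and Corollary~\ref{cr:kanttauk}.
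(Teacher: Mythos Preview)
Your proposal is correct and follows the paper's intended approach: the paper states that Corollary~\ref{cr:kanttk} follows from Proposition~\ref{pr:2} (together with the observation \eqref{eq:tknk} that $\{t_k\}$ is the special case $\tau_0=0$ of Corollary~\ref{cr:kanttauk}), and that is precisely how you obtain well-definedness, strict monotonicity, containment in $[0,t_*)$, the $Q$-linear rate, and the $Q$-quadratic bound \eqref{eq:qcsk}.

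The one place where you go beyond the paper is the consecutive-difference bound \eqref{eq:qcct}. The paper asserts that this too follows from Proposition~\ref{pr:2}, but that proposition as stated only controls $t_*-n_f(t)$, not $n_f(t_k)-t_k$, so some extra argument is genuinely required. Your convexity calculation---writing $f(t_k)$ as the Newton linearization error, bounding it via the chord inequality for $f'$ on $[t_{k-1},t_k]$, and then passing the resulting secant slope up to $D^-f'(t_*)$ using Proposition~\ref{pr:conv.aux1}---is exactly the standard way to obtain this bound and is correct as written. So your proof supplies explicitly what the paper leaves implicit, with the same overall strategy.
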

\subsection{Relationship between the majorant function and the vector field} \label{sec:rbmno}
In this subsection we will establish the main relationship between the majorant function and the vector field necessaries to prove Theorem~\ref{th:knt1}.
\begin{proposition}\label{ineq:inv}
Let $\xi \in {\cal G}_2(p_0,R)$. If $\ell[\xi,0,s]\leq t< \bar{t}$   then $\nabla X(\xi(s))$ is nonsingular and the following inequality holds
$$
\|\nabla X(\xi(s))^{-1}P_{\xi,0,s}\nabla X(p_0)\| \leq \frac{1}{\left |f'\left(\ell[\xi, 0,s] \right)\right|}  \leq \frac{1}{\left |f'\left(t\right)\right|}.
$$
\end{proposition}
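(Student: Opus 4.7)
The plan is to reduce the statement to a direct application of Banach's Lemma (Lemma~\ref{lem:ban}). First I would introduce the auxiliary linear operator on $T_{p_0}\mathcal{M}$ defined by
\[
A \;:=\; \nabla X(p_0)^{-1}\,P_{\xi,s,0}\,\nabla X(\xi(s))\,P_{\xi,0,s}.
\]
Because parallel transport is an isometry and $\nabla X(p_0)$ is nonsingular by assumption, $A$ is nonsingular if and only if $\nabla X(\xi(s))$ is, and in that case
\[
\|A^{-1}\| \;=\; \|P_{\xi,s,0}\,\nabla X(\xi(s))^{-1}P_{\xi,0,s}\,\nabla X(p_0)\|
\;=\; \|\nabla X(\xi(s))^{-1}P_{\xi,0,s}\,\nabla X(p_0)\|.
\]
Hence bounding the quantity in the statement is equivalent to bounding $\|A^{-1}\|$ via Banach's Lemma.

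Next I would specialize the majorant inequality \eqref{eq:mc} with $a=0$ and $b=s$. Using $\xi(0)=p_0$, $P_{\xi,0,0}=I_{p_0}$ and the normalization $f'(0)=-1$ from \textbf{h1}, this gives
\[
\|A - I_{p_0}\| \;\leq\; f'\bigl(\ell[\xi,0,s]\bigr) - f'(0) \;=\; f'\bigl(\ell[\xi,0,s]\bigr) + 1.
\]
The hypothesis $\ell[\xi,0,s]\leq t<\bar t$, together with the definition \eqref{eq:def.bart} of $\bar t$ and the fact that $f'$ is strictly increasing (\textbf{h2}) with $f'(0)=-1$, forces $f'(\ell[\xi,0,s])<0$; indeed the sublevel set $\{r:f'(r)<0\}$ is an interval containing $0$ whose supremum is exactly $\bar t$. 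Thus
\[
\|A - I_{p_0}\| \;\leq\; 1 - \bigl|f'(\ell[\xi,0,s])\bigr| \;<\; 1,
\]
so Lemma~\ref{lem:ban} applies: $A$ is nonsingular (whence $\nabla X(\xi(s))$ is too) and $\|A^{-1}\|\leq 1/\bigl|f'(\ell[\xi,0,s])\bigr|$, which is the first inequality. The second inequality follows by the same monotonicity argument: since $|f'|$ is decreasing on $[0,\bar t)$ and $\ell[\xi,0,s]\leq t<\bar t$, we get $|f'(\ell[\xi,0,s])|\geq |f'(t)|$, hence $1/|f'(\ell[\xi,0,s])|\leq 1/|f'(t)|$.

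No step is genuinely difficult; the one requiring care is the bookkeeping when recognizing $A-I_{p_0}$ as the exact quantity controlled by \eqref{eq:mc}, and the sign management when passing between $f'$ (which is negative) and its absolute value. Once these identifications are made, the proof is a one-shot application of Banach's Lemma plus monotonicity of $f'$.
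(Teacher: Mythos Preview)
Your proposal is correct and follows essentially the same approach as the paper, which simply points to Definition~\ref{de:majcon} and Lemma~\ref{lem:ban} and defers the details to \cite{FerreiraSvaiter2009} and \cite{Alvarez2008}. Your write-up makes explicit precisely the computation those references contain: identifying $A-I_{p_0}$ with the expression controlled by the majorant inequality at $a=0$, $b=s$, then invoking Banach's Lemma and the monotonicity of $f'$ on $[0,\bar t)$.
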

\begin{proof}
Using Definition~\ref{de:majcon} and Lemma~\ref{lem:ban}, the proof  follows  the  same pattern of   Proposition 3.4 of \cite{FerreiraSvaiter2009}, see also  Lemma 4.2. of \cite{Alvarez2008}.
\end{proof}
Newton iteration at a point happens to be a zero of the linearization  at such a point.  Therefore, we study the linearization error of the vector field and the associated majorant function. The  formal definitions of these erros are:
\begin{definition} \label{de:errorf}
Let  $f:[0,\; R)\to \mathbb{R}$ be a  continuously differentiable function.  The linearization error of $f$ is defined by
\begin{equation} \label{eq:errorf}
e(a,b):=f(b)-\left[f(t)+f'(a)(b-a)\right], \qquad \forall \; a, b\in [0, R).
\end{equation}
\end{definition}
\begin{definition}  \label{de:errorX}
Let $\cal M$ be a Riemannian manifold, $\Omega\subseteq {\cal M}$ an open set, $X:{\Omega}\to T{\cal M}$   a continuously differentiable  vector field and $a, b\in [0, R)$. The  linearization error of $X$ on a geodesic  $\zeta:[a, b] \to \Omega$  is defined by
\begin{equation}\label{eq:errorX}
E\left(\zeta(a),\zeta(b)\right):=X(\zeta(b))-P_{\zeta,a,b}\left[X(\zeta(a))+(b-a)\nabla X(\zeta(a))\zeta'(a)\right].
\end{equation}
\end{definition}
In the next result we compare linearization error of the vector field with the linearization error of the majorant function associated.
\begin{lemma}\label{ineq:errors}
Let $\xi \in {\cal G}_2(p_0,R)$ be a curve passing through $p=\xi(a)$ and $q=\xi(b)$ such that   $\xi_{\mid_{[a,b]}}$ is a geodesic and  $0 \leq a \leq b$. Take  $0 \leq t < x< R$.  If $\ell[\xi,0,a]\leq t$ and $\ell[\xi, a,b] \leq x-t$, then
 $$
\left\|\nabla X(p_0)^{-1}P_{\xi,b,0} E(p,q) \right \| \leq e(t,x)\frac{\ell[\xi, a,b]^2 }{(x-t)^2}.
 $$
 As a consequence, the following inequality holds: $\left\|\nabla X(p_0)^{-1}P_{\xi,b,0} E(p,q) \right \| \leq e(t,x).$
\begin{proof}
Definition~\ref{de:errorX} with $\zeta=\xi_{\mid_{[a,b]}}$ and properties of parallel transport in \eqref{eq:ptp} imply
$$
E(p,q) = P_{\xi,a,b}\left[P_{\xi,b,a}X(q) - X(p) -(b-a)\nabla X(p)\xi'(a)\right].
$$
Hence, using Lemma~\ref{le:TFC} and that $\xi'(s)= P_{\xi,a,s} \xi'(a)$, the  last equality becomes
$$
E(p,q)  = P_{\xi,a,b} \int_{a}^{b}{\left[P_{\xi,s,a}\nabla X(\xi(s))P_{\xi,a,s} - \nabla X(p)\right]\xi'(a)\rm{d}s},
$$
which is equivalent to
$$
\nabla X(p_0)^{-1}P_{\xi,b,0}E(p,q)  = \int_{a}^{b}{ \nabla X(p_0)^{-1}\left[P_{\xi,s,0}\nabla X(\xi(s))P_{\xi,a,s} -  P_{\xi,a,0} \nabla X(p)\right]\xi'(a) \rm{d}s}.
$$
Since $\xi : [a,b] \to {\cal M}$ is a  geodesic  joining  $p$ and $q$ we have $\left\| \xi'(a)\right\|=\ell[\xi, a,b]/(b-a)$. Thus last equality implies
\begin{multline} \label{eq:errb1}
\left\|\nabla X(p_0)^{-1}P_{\xi,b,0}E(p,q) \right\|\leq  \\\int_{a}^{b} {\left\| \nabla X(p_0)^{-1}\left[P_{\xi,s,0}\nabla X(\xi(s))P_{\xi,a,s} -  P_{\xi,a,0} \nabla X(p)\right]\right\| \frac{\ell[\xi, a,b]}{b-a}\rm{d}s}.
\end{multline}
Because $a\leq s\leq b$, using   the assumptions $\ell[\xi,0,a]< t$ and $\ell[\xi, a,b] \leq x-t$ we have
$$
\ell[\xi, 0,s]\leq \ell[\xi, 0,a]+\ell[\xi, a,b]\leq x<R,
$$
and as   $\xi : [0, s] \to {\cal M}$   is  a   piecewise  geodesic curves joining the points $p_0$ to $\xi(s)$  through  $p$, i.~e.,  $\xi \in {\cal G}_2(p_0,R)$, we may use the  majorant condition in Definition~\ref{de:majcon} with  $b=s$ and $q=\xi(s)$ together with inequality in \eqref{eq:errb1} to conclude that
$$
\left\|\nabla X(p_0)^{-1}P_{\xi,b,0}E(p,q) \right\|\leq   \int_{a}^{b}\left[ f'\left(\ell[\xi, 0,s] \right)-f'\left(\ell[\xi,0,a]\right)\right]\frac{\ell[\xi, a,b]}{b-a}\rm{d}s.
$$
Using convexity of $f'$,   $\ell[\xi,0,a]\leq t$, $\ell[\xi, a,b]\leq x-t$, $x<R$ and  Proposition~\ref{pr:conv.aux1} we have
\begin{align*}
 f'\left(\ell[\xi, 0,s] \right)-f'\left(\ell[\xi,0,a]\right)
 &= f'\left(\ell[\xi, 0,a]+\ell[\xi,a,s] \right)-f'\left(\ell[\xi,0,a]\right)\\
 &\leq   f'\left(t+\ell[\xi, a,s] \right)-f'\left(t\right) \\
 &=  f'\left(t+\frac{s-a}{b-a}\ell[\xi, a,b]\right)-f'(t)\\
 &\leq  \left[f'\left(t+\frac{s-a}{b-a}(x-t)\right)-f'(t)\right]\frac{\ell[\xi, a,b]}{x-t}.
\end{align*}
Therefore, combining two last  inequality we obtain that
$$
\left\|\nabla X(p_0)^{-1}P_{\xi,b,0}E(p,q) \right\| \leq \int_{a}^{b}\left[f'\left(t+\frac{s-a}{b-a}(x-t)\right)-f'(t)\right]\frac{\ell[\xi, a,b]^2}{(x-t)(b-a)}\rm{d}s.
$$
After performing the integral and some algebraic manipulations the above inequality becomes
$$
\left\|\nabla X(p_0)^{-1}P_{\xi,b,0}E(p,q) \right\| \leq \left[f(x)-f(t)-f'(t)(x-t) \right]\frac{\ell[\xi, a,b]^2 }{(x-t)^2},
$$
which,  Definition~\ref{de:errorf},  implies the desired inequality.
\end{proof}
\end{lemma}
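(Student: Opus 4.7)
The plan is to write the linearization error $E(p,q)$ as an integral of differences of covariant derivatives along the geodesic piece $\xi_{|_{[a,b]}}$, dominate that integrand via the majorant condition, and then reshape the resulting bound into $e(t,x)\,\ell[\xi,a,b]^2/(x-t)^2$ by exploiting convexity of $f'$. Starting from Definition~\ref{de:errorX} and the parallel transport identities \eqref{eq:ptp}, I would first rewrite
$$
E(p,q)=P_{\xi,a,b}\bigl[P_{\xi,b,a}X(q)-X(p)-(b-a)\nabla X(p)\xi'(a)\bigr].
$$
Since the tangent field of a geodesic is parallel, $\xi'(s)=P_{\xi,a,s}\xi'(a)$, so applying Lemma~\ref{le:TFC} to $X$ on $\xi_{|_{[a,b]}}$ and absorbing the linear term into the integral yields
$$
E(p,q)=P_{\xi,a,b}\int_{a}^{b}\bigl[P_{\xi,s,a}\nabla X(\xi(s))P_{\xi,a,s}-\nabla X(p)\bigr]\xi'(a)\,ds.
$$

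Composing with $\nabla X(p_0)^{-1}P_{\xi,b,0}$ and using $P_{\xi,b,0}\circ P_{\xi,a,b}=P_{\xi,a,0}$ along with $\|\xi'(a)\|=\ell[\xi,a,b]/(b-a)$, taking norms inside the integral reduces the task to estimating, for each $s\in[a,b]$,
$$
\bigl\|\nabla X(p_0)^{-1}\bigl[P_{\xi,s,0}\nabla X(\xi(s))P_{\xi,a,s}-P_{\xi,a,0}\nabla X(p)\bigr]\bigr\|.
$$
The restriction of $\xi$ to $[0,s]$ is the original first geodesic piece followed by a geodesic segment from $a$ to $s$, so it still belongs to ${\cal G}_2(p_0,R)$, and the majorant condition \eqref{eq:mc} applies with arc-length arguments $\ell[\xi,0,a]$ and $\ell[\xi,0,s]$, yielding the bound $f'(\ell[\xi,0,s])-f'(\ell[\xi,0,a])$.

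For the final step, I would use $\ell[\xi,0,a]\leq t$ together with the constant-speed parametrization $\ell[\xi,a,s]=(s-a)\ell[\xi,a,b]/(b-a)$ and $\ell[\xi,a,b]\leq x-t$. The convexity of $f'$ (increasing increments) first gives $f'(\ell[\xi,0,s])-f'(\ell[\xi,0,a])\leq f'(t+\ell[\xi,a,s])-f'(t)$, and then the secant inequality from Proposition~\ref{pr:conv.aux1} on $[t,\,t+\tfrac{s-a}{b-a}(x-t)]$ contributes an extra factor $\ell[\xi,a,b]/(x-t)$. Integrating the resulting bound over $[a,b]$ and substituting $r=t+\tfrac{s-a}{b-a}(x-t)$ collapses the integral to $f(x)-f(t)-f'(t)(x-t)=e(t,x)$, producing exactly $e(t,x)\,\ell[\xi,a,b]^2/(x-t)^2$. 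The consequence then follows from $\ell[\xi,a,b]\leq x-t$.

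The main obstacle will be the convexity step: one must deploy two distinct consequences of convexity of $f'$—increasing increments to replace $\ell[\xi,0,a]$ by $t$, and the secant inequality to replace $\ell[\xi,a,b]$ by $x-t$—in order to produce the squared ratio. A secondary care point is verifying that each truncation $\xi_{|_{[0,s]}}$ genuinely qualifies as an element of ${\cal G}_2(p_0,R)$ so that the majorant condition can be invoked legitimately.
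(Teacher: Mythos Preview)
Your proposal is correct and follows essentially the same route as the paper: rewrite $E(p,q)$ via Lemma~\ref{le:TFC} as an integral of covariant-derivative differences along $\xi_{|_{[a,b]}}$, apply the majorant condition \eqref{eq:mc} pointwise, then use convexity of $f'$ twice (increasing increments to pass from $\ell[\xi,0,a]$ to $t$, and Proposition~\ref{pr:conv.aux1} to pass from $\ell[\xi,a,b]$ to $x-t$), and integrate to obtain $e(t,x)$. Your identification of the two separate convexity steps and the care point about $\xi_{|_{[0,s]}}\in{\cal G}_2(p_0,R)$ match the paper's argument exactly.
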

Proposition~\ref{ineq:inv} guarantees, in particular,  that   $\nabla X(p)$ is nonsingular  at $p\in B(p_0,t_*)$  and, consequently, the {\it Newton's iteration} is well defined in $ B(p_0,t_*)$.   Let us call it  $N_{X}: B(p_0,t_*)  \to \cal M$,
\begin{equation} \label{NF} \begin{array}{rcl}
    N_{X}(p):= \mbox{exp}_p(-\nabla X(p)^{-1}X(p)).
  \end{array}
\end{equation}
One can apply a \emph{single} Newton's iteration on any $p\in  B(p_0,t_*)$ to obtain the point $N_{X}(p)$
which may not is contained  to $ B(p_0,t_*)$, or even may not in the domain of $X$.
Hence,  this is enough to guarantee the well-definedness of only one iteration.   To ensure that Newtonian
iteration   may be repeated indefinitely, we need some additional definitions and results. First, we  define some subsets of $B(p_0, t_*)$ in which, as we shall
prove, Newton iteration \eqref{NF} is ``well behaved'':
\begin{align}\label{E:K}
K(t)&:=\left\{ p\in   \Omega ~:~ d(p_0, p)\leq t\, ,  ~ \; ~
   \left\|\nabla X(p)^{-1}X(p)\right\| \leqslant -\frac{f(t)}{f'(t)}\right\},\qquad
   t\in [0,t_*)\,.\\
  \label{eq:def.K}
 K&:=\bigcup_{t\in[0,t_*)} K(t),
\end{align}
 In  \eqref{E:K}, $0\leqslant t<t_*\leq \bar t$,  hence using  Proposition~\ref{pr:maj.f}  and  Proposition~\ref{ineq:inv}  we conclude that $f'(t)\neq 0$ and $\nabla X(p)$ is nonsingular in $B[p_0,t]\subset B[p_0,t_*)$, respectively. Therefore the above definitions are consistent. It is worth point out  that the above sets appeared for the first time in  \cite{FerreiraSvaiter2002}; see also  \cite{FerreiraSvaiter2009}.
\begin{lemma} \label{l:iset1}
For each $t\in [0, t_*)$ and each $p\in K(t)$  there hold:
\begin{itemize}
 \item[{\bf i)}]  $ \displaystyle \|\nabla X(p)^{-1}X(p)\| \leq  -\frac{f(t)}{f'(t)}$;
 \item[{\bf ii)}]  $d(p_0, p)+  \|\nabla X(p)^{-1}X(p)\| \leq n_{f}(t)<t_*$. As a consequence, $d(p_0,  N_{X}(p)) \leq n_{f}(t)<t_*.$
 \item[{\bf iii)}]  $ \displaystyle  \left\| \nabla X( N_{X}(p))^{-1}X( N_{X}(p))\right\| \leq  - \frac{f(n_f(t))}{f'(n_f(t))}\left[ \frac{\left\|\nabla X(p)^{-1}X(p)\right\|}{- f(t)/f'(t)}\right]^2$.
\end{itemize}
\end{lemma}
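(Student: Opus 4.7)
The first two items essentially unwind definitions, so the real content is in (iii). My plan is to dispatch (i) and (ii) quickly and then build item (iii) around the observation that one Newton iterate makes the vector field equal its own linearization error.

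For (i), this is tautological: item (i) is just the inequality appearing in the definition \eqref{E:K} of $K(t)$. For (ii), I would combine (i) with the other inequality in \eqref{E:K}, namely $d(p_0,p)\leq t$, to obtain
\[
 d(p_0,p)+\|\nabla X(p)^{-1}X(p)\|\;\leq\; t-\frac{f(t)}{f'(t)}\;=\;n_f(t),
\]
and then invoke Proposition~\ref{pr:2} to conclude $n_f(t)<t_*$. The consequence $d(p_0,N_X(p))\leq n_f(t)$ then follows because $N_X(p)=\exp_p(v)$ with $v=-\nabla X(p)^{-1}X(p)$, so the geodesic $s\mapsto\exp_p(sv)$ has length $\|v\|$, whence $d(p,N_X(p))\leq \|\nabla X(p)^{-1}X(p)\|$ and the triangle inequality finishes the job.

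For (iii), I would set $v:=-\nabla X(p)^{-1}X(p)$ and form a piecewise geodesic $\xi\in{\cal G}_2(p_0,R)$ by concatenating a minimizing geodesic from $p_0$ to $p$ (parametrized on $[0,a]$ with $a=d(p_0,p)$) with the geodesic $s\mapsto\exp_p((s-a)v/\|v\|)$ from $p$ to $N_X(p)$ (parametrized on $[a,b]$ with $b-a=\|v\|$). By construction $\ell[\xi,0,b]\leq n_f(t)<t_*\leq \bar t$, so Proposition~\ref{ineq:inv} applies and gives
\[
 \|\nabla X(N_X(p))^{-1}P_{\xi,0,b}\,\nabla X(p_0)\|\;\leq\;\frac{1}{|f'(n_f(t))|}.
\]
The key identity is that since $\nabla X(p)v=-X(p)$, the linearization error from Definition~\ref{de:errorX} collapses to $E(p,N_X(p))=X(N_X(p))$. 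Hence
\[
 \nabla X(N_X(p))^{-1}X(N_X(p))\;=\;\bigl[\nabla X(N_X(p))^{-1}P_{\xi,0,b}\,\nabla X(p_0)\bigr]\,\bigl[\nabla X(p_0)^{-1}P_{\xi,b,0}\,E(p,N_X(p))\bigr].
\]

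The second factor is what Lemma~\ref{ineq:errors} was built to estimate. I apply it with the parameters $t$ (the one in the hypothesis) and $x=n_f(t)$: the hypotheses $\ell[\xi,0,a]=d(p_0,p)\leq t$ and $\ell[\xi,a,b]=\|v\|\leq -f(t)/f'(t)=n_f(t)-t=x-t$ are exactly what (i) and $K(t)$ provide. The lemma then yields
\[
 \|\nabla X(p_0)^{-1}P_{\xi,b,0}E(p,N_X(p))\|\;\leq\;e(t,n_f(t))\,\frac{\|v\|^2}{(n_f(t)-t)^2}.
\]
A small computation using the definition $n_f(t)=t-f(t)/f'(t)$ collapses $e(t,n_f(t))=f(n_f(t))-f(t)-f'(t)(n_f(t)-t)=f(n_f(t))$, and likewise $(n_f(t)-t)^2=(f(t)/f'(t))^2$. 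Multiplying the two bounds and recognizing that $f'(n_f(t))<0$ (so $1/|f'(n_f(t))|=-1/f'(n_f(t))$) yields precisely
\[
 \|\nabla X(N_X(p))^{-1}X(N_X(p))\|\;\leq\;-\frac{f(n_f(t))}{f'(n_f(t))}\left[\frac{\|\nabla X(p)^{-1}X(p)\|}{-f(t)/f'(t)}\right]^2,
\]
which is the claim of (iii).

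The only nontrivial step is the identification $E(p,N_X(p))=X(N_X(p))$, which turns a quantity defined on the target manifold into something one can estimate by the majorant-function machinery; after that, the proof is essentially bookkeeping. A secondary care-point is verifying that the concatenated $\xi$ is indeed in ${\cal G}_2(p_0,R)$ — this reduces to checking $\ell[\xi,0,b]\leq n_f(t)<t_*\leq R$, which is provided by (ii) together with Proposition~\ref{pr:maj.f}.
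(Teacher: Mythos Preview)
Your proposal is correct and follows essentially the same approach as the paper: items (i)--(ii) are unwound from the definition of $K(t)$ and Proposition~\ref{pr:2}, and item (iii) is obtained by concatenating a minimizing geodesic from $p_0$ to $p$ with the Newton geodesic from $p$ to $N_X(p)$, then combining Proposition~\ref{ineq:inv}, the identity $E(p,N_X(p))=X(N_X(p))$, and Lemma~\ref{ineq:errors} with $x=n_f(t)$, followed by the simplifications $e(t,n_f(t))=f(n_f(t))$ and $n_f(t)-t=-f(t)/f'(t)$. The only cosmetic differences are your arc-length parametrization (the paper parametrizes on $[0,1]\cup[1,2]$) and the order in which you state the key identity versus the error estimate; you may wish to note that the degenerate case $v=0$ is trivial so that your normalization $v/\|v\|$ is harmless.
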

\begin{proof}
Let $t\in[0,t_*)$, $p\in K(t)$.  Using  definition of the set $K(t)$ in  \eqref{E:K}   the item {\bf i} follows.

Using  Proposition~\ref{pr:2} and definition of $K(t)$ in  \eqref{E:K}  to obtain that  $d(p_0, p)\leq t\, $  and $n_f(t)<t_*$, respectively. Hence,  the proof of the  first part of item {\bf ii} follows by combination of  two  last  inequalities  with      item {\bf i} and definition of $n_f$ in \eqref{eq:n.f.2}.   For proving the second part  of   item {\bf ii} use  triangular inequality to obtain  $d(p_0,  N_{X}(p))\leq d(p_0, p) + d(p,  N_{X}(p))$, definition in \eqref{NF} and  then first part.

We are going to prove item {\bf iii}. Let   $\xi:[0,2] \to {\cal M}$ a piecewise geodesic curve obtained by concatenation of a minimizing geodesic $\xi_{\mid_{[0,1]}}$ joining $p_0$ and $p$ and the geodesic curve   $\xi_{\mid_{[1,2]}}$ defined by
\begin{equation} \label{eq:newgeo}
 \xi(t)=exp_p\left((1-t)\nabla X(p)^{-1}X(p)\right).
\end{equation}
Note that $\xi\in {\cal G}_2(p_0,R)$. From  definition of the piecewise geodesic curve $\xi$ and definitions   in   \eqref{NF} and   \eqref{eq:newgeo}   we have
$$
\ell[\xi, 0,2]= d(p_0, p) +\left\|\nabla X(p)^{-1}X(p)\right\|.
$$
Since $\xi(2)=N_X(p)$, using last equality,   first inequality in  item {\bf ii}  and Proposition~\ref{ineq:inv},  by taking into account that   the derivative $f'$ is increasing and negative in $[0,  \bar t)$, we conclude that  $\nabla X(N_{X}(p) )$ is nonsingular and there holds
\begin{equation} \label{eq:blmainth}
\|\nabla X(N_{X}(p) )^{-1}P_{\xi,0,2}\nabla X(p_0)\| \leq \frac{1}{|f'(d(p_0, p)+\left\|\nabla X(p)^{-1}X(p)\right\|)|} \leq  \frac{1}{|f'(n_{f}(t))|}.
\end{equation}
On the other hand, as $\ell[\xi, 1, 2]=\left\|\nabla X(p)^{-1}X(p)\right\|$, combining item  {\bf i} with  definition of $n_f$ in \eqref{eq:n.f.2} we obtain $\ell[\xi, 1, 2]\leq n_f(t)-t$.  Since second part in item  {\bf ii}  imples  $d\left(p_0, N_{X}(p)\right) \leq n_{f}(t)<t_*$. Thus,  we may apply Lemma~\ref{ineq:errors} with $x=n_{f}(t)$ and  $q= N_{X}(p)$ to conclude that
\begin{equation} \label{eq:blmainth2}
\left\|\nabla X(p_0)^{-1}P_{\xi,2,0} E(p,N_{X}(p)) \right \| \leq e(t,n_{f}(t))\frac{\left\|\nabla X(p)^{-1}X(p)\right\|^2 }{(n_{f}(t)-t)^2}.
\end{equation}
We know that  $ N_{X}(p)$ belongs to the domain of  $X$. Hence,  Newton's iterations in  \eqref{NF},   linearization error in Definition~\ref{de:errorX}  with $\zeta=\xi_{\mid_{[1,2]}}$ and \eqref{eq:newgeo} yield\begin{equation*}
E(p,N_{X}(p)) = X(N_{X}(p))-P_{\xi,1,2}\left[X(p)+\nabla X(p)\left(-\nabla X(p)^{-1}X(p)\right)\right],
\end{equation*}
which is equivalent to
$
E(p,N_{X}(p)) = X(N_{X}(p)).
$
Thus, using this equality we obtain after  simples algebraic manipulation that
\begin{equation*}
\nabla X( N_{X}(p))^{-1}X( N_{X}(p))= \nabla X(N_{X}(p))^{-1}P_{\xi,0,2}\nabla X(p_0) \nabla X(p_0)^{-1}P_{\xi,2,0} E(p,N_{X}(p)).
\end{equation*}
Taking norm is last equality and using the inequalities \eqref{eq:blmainth} and \eqref{eq:blmainth2} we easily conclude  that
$$
\left\| \nabla X( N_{X}(p))^{-1}X( N_{X}(p))\right\| \leq \frac{e(t,n_{f}(t))}{|f'(n_{f}(t))|}\frac{\left\|\nabla X(p)^{-1}X(p)\right\|^2}{(n_{f}(t)-t)^2}.
$$
Finally, since  $n_f(t)$ belongs to the domain of  $f$, using the definitions of Newton iterations on \eqref{eq:n.f.2} and definition of the linearization error in \eqref{eq:errorf},  we obtain $ f(n_f(t))=e(t,n_f(t))$ which combined with $n_{f}(t)-t=f(t)/f'(t)$ and  last inequality implies the desired result. Therefore, the proof of the lemma is concluded.
\end{proof}

\begin{lemma} \label{NfNF}
For each $t\in [0, t_*)$ the following inclusions  hold: $K(t)\subset B(p_0,t_*)$ and
$$N_{X}\left( K(t) \right)\subset K\left( n_f(t) \right).$$
As a consequence,
$K\subset B(p_0,t_*)$ and $N_{X}(K)\subset K.$
\end{lemma}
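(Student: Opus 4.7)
The plan is to extract everything from the previous lemma (Lemma~\ref{l:iset1}), which already does the heavy lifting. The present statement is essentially a repackaging of items (ii) and (iii) there, together with the observation that Newton iterations $n_f$ stay in $[0,t_*)$.

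First I would verify $K(t)\subset B(p_0,t_*)$: this is immediate from the definition in \eqref{E:K}, since any $p\in K(t)$ satisfies $d(p_0,p)\leq t<t_*$. Next, to prove $N_X(K(t))\subset K(n_f(t))$, fix $t\in[0,t_*)$ and $p\in K(t)$. I would first check that $N_X(p)$ lies in $\Omega$: by item (ii) of Lemma~\ref{l:iset1}, $d(p_0,N_X(p))\leq n_f(t)<t_*<R$, so $N_X(p)\in B(p_0,R)\subset\Omega$ using the inclusion built into Definition~\ref{de:majcon}. This also gives the distance condition required for membership in $K(n_f(t))$. For the second defining inequality of $K(n_f(t))$, I would use item (iii) of Lemma~\ref{l:iset1}: since $p\in K(t)$ means $\|\nabla X(p)^{-1}X(p)\|\leq -f(t)/f'(t)$, the square-bracketed factor in item (iii) is at most $1$, and what remains is exactly
\[
\left\|\nabla X(N_X(p))^{-1}X(N_X(p))\right\|\leq -\frac{f(n_f(t))}{f'(n_f(t))},
\]
so $N_X(p)\in K(n_f(t))$.

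Finally, the two consequences: $K\subset B(p_0,t_*)$ is immediate by taking the union of $K(t)\subset B(p_0,t_*)$ over $t\in[0,t_*)$, and $N_X(K)\subset K$ follows by combining $N_X(K(t))\subset K(n_f(t))$ with the fact that $n_f$ maps $[0,t_*)$ into $[0,t_*)$ (Proposition~\ref{pr:2}), so $K(n_f(t))\subset K$ by definition \eqref{eq:def.K}.

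No real obstacle is expected here; the whole lemma is a bookkeeping consequence of the previous one. The only subtle point to be careful about is ensuring that $N_X(p)$ is in the domain of $X$ (so that $\nabla X(N_X(p))$ and $X(N_X(p))$ are defined and item (iii) of Lemma~\ref{l:iset1} can even be invoked), which is handled as above via $B(p_0,t_*)\subset\Omega$.
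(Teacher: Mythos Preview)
Your proposal is correct and follows essentially the same approach as the paper: both arguments read the first inclusion directly off the definition of $K(t)$, obtain $N_X(K(t))\subset K(n_f(t))$ by combining items~(ii) and~(iii) of Lemma~\ref{l:iset1} (using item~(i) to bound the bracketed ratio by~$1$), and deduce the consequences by taking unions together with the fact that $n_f$ maps $[0,t_*)$ into itself. Your extra care in verifying $N_X(p)\in\Omega$ before invoking item~(iii) is a welcome clarification that the paper leaves implicit.
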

\begin{proof}
The first inclusion follows trivially from the definition of $K(t)$ in \eqref{E:K}. Combining items {\bf i} and {\bf iii} of Lemma~\ref{l:iset1}  we have
 $$
\left\| \nabla X( N_{X}(p))^{-1}X( N_{X}(p))\right\| \leq  \frac{f(n_f(t))}{\left|f'(n_f(t))\right|}.
 $$
Therefore,  the second  inclusion of the lemma follows  from  combination of  last inequality in item {\bf ii} of Lemma~\ref{l:iset1},  last inequality and definition of $K(t)$. The first  inclusion on the second sentence follows trivially
from definitions \eqref{E:K} and \eqref{eq:def.K}.  To verify the last inclusion, take $p\in K$.  Then $p\in K(t)$ for some $t\in [0,t_*)$. Using the  first part of the lemma, we conclude that $N_{X}(p)\subseteq K(n_f(t))$. To end the proof, note that $n_f(t)\in [0,t_*)$ and use the definition of $K$ in \eqref{eq:def.K}.
\end{proof}
We  end this session  limiting the derivative of the vector field by the derivative of the majorant function.
\begin{proposition}\label{l:norm-X'(p)}
If  $d(p_0,p)\leq t < R$ then $\left\|\nabla X(p)\right\| \leq \|\nabla X(p_0)\|(2+f'(t))$.
\end{proposition}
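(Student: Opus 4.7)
The plan is to reduce the estimate to a direct application of the majorant condition along a minimizing geodesic joining $p_0$ to $p$. Since $\mathcal{M}$ is complete and $d(p_0,p)\leq t<R$, the point $p$ lies in $B(p_0,R)\subset\Omega$, and by Hopf–Rinow there is a minimizing normalized geodesic $\xi\colon[0,b]\to\mathcal{M}$ with $\xi(0)=p_0$, $\xi(b)=p$ and $b=d(p_0,p)\leq t$. This single minimizing segment belongs to $\mathcal{G}_1(p_0,R)\subset\mathcal{G}_n(p_0,R)$, so the majorant condition \eqref{eq:mc} applies to it.

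Applying \eqref{eq:mc} with $a=0$ and the above $b$, and noting $\ell[\xi,0,0]=0$, $\ell[\xi,0,b]=b$, together with $P_{\xi,0,0}=I_{p_0}$, yields
\[
\left\|\nabla X(p_0)^{-1}\bigl[P_{\xi,b,0}\,\nabla X(p)\,P_{\xi,0,b}-\nabla X(p_0)\bigr]\right\|\leq f'(b)-f'(0).
\]
By condition \textbf{h1}, $f'(0)=-1$, and by \textbf{h2}, $f'$ is increasing, so $f'(b)\leq f'(t)$. Hence the right-hand side is bounded by $f'(t)+1$.

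Next I would write
\[
P_{\xi,b,0}\,\nabla X(p)\,P_{\xi,0,b}=\nabla X(p_0)+\nabla X(p_0)\cdot E,
\]
where $\|E\|\leq f'(t)+1$. Taking norms and using the fact that parallel transport is an isometry (so $\|P_{\xi,b,0}\,\nabla X(p)\,P_{\xi,0,b}\|=\|\nabla X(p)\|$) gives
\[
\|\nabla X(p)\|\leq \|\nabla X(p_0)\|\bigl(1+f'(t)+1\bigr)=\|\nabla X(p_0)\|\bigl(2+f'(t)\bigr),
\]
which is the desired inequality.

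The argument is essentially a one-liner once the right geodesic is chosen; there is no real obstacle. The only point worth checking carefully is that $\xi\in\mathcal{G}_n(p_0,R)$ — this is immediate from Definition~\ref{def:g2} since a single minimizing geodesic trivially satisfies both bullets with $T=b\leq t<R$. Everything else is isometry of parallel transport, hypothesis \textbf{h1}, and monotonicity of $f'$.
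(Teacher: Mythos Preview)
Your proof is correct and follows essentially the same route as the paper: pick a minimizing geodesic from $p_0$ to $p$, apply the majorant inequality \eqref{eq:mc} with $a=0$ to bound $\|\nabla X(p_0)^{-1}[P_{\xi,b,0}\nabla X(p)P_{\xi,0,b}-\nabla X(p_0)]\|$ by $f'(d(p_0,p))-f'(0)=f'(d(p_0,p))+1\leq f'(t)+1$, then convert this into a bound on $\|\nabla X(p)\|$ via isometry of parallel transport and the factorization through $\nabla X(p_0)$. The only cosmetic difference is that the paper parametrizes the geodesic on $[0,1]$ and writes the final step as $\|\nabla X(p)\|\leq\|\nabla X(p_0)\|\,\|\nabla X(p_0)^{-1}P_{\xi,1,0}\nabla X(p)\|$, whereas you phrase it as $\nabla X(p_0)(I_{p_0}+E)$; these are the same computation.
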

\begin{proof}
Let $\xi:[0,1] \to {\cal M}$ is a minimizing geodesic joining $p_0$ to $p$. After some algebraic  manipulations we have
\begin{eqnarray*}
\left \| \nabla X(p_0)^{-1}P_{\xi,1,0}\nabla X(p)\right\| &=& \left \| \nabla X(p_0)^{-1}\left[P_{\xi,1,0} \nabla X(p)P_{\xi,0,1}-\nabla X(p_0) + \nabla X(p_0)\right]\right \|\\
& \leq & \left\| \nabla X(p_0)^{-1}\left[P_{\xi,1,0}\nabla X(p)P_{\xi,0,1}-\nabla X(p_0)\right]\right\|+ \left\| I_{p_0} \right \|.
\end{eqnarray*}
Since  $\xi$ is a minimizing geodesic joining $p_0$ to $p$ we have $\ell[\xi, 0,1]=d(p_0,p)$. Thus, using that  $f$ is  a majorant function at a point $p_0$ for the vector field $X$,  above inequality yelds
$$
\left \| \nabla X(p_0)^{-1}P_{\xi,1,0}\nabla X(p)\right\| \leq f'(d(p_0,p))-f'(0)+1 \leq  2 + f'(t),
$$
because  $d(p_0,p)\leq t $ and $f'$ is a increasing function.  Finally,  using last inequality and taking into account that
$$
\left\|\nabla X(p)\right\| \leq  \|\nabla X(p_0)\|\left \| \nabla X(p_0)^{-1}P_{\xi,1,0}\nabla X(p)\right\|,
$$
 the desired inequality follows.
\end{proof}
\subsection{Convergence}

In this  section  we establish  all the convergence results stated in Theorem~\ref{th:knt1} related to  $\{p_k\}$,   the sequence  generated by Newton's Method, namely, the  convergence of  $\{p_k\}$ to a zero of $X$, the bounds   in  \eqref{eq:bd11},  \eqref{eq:lc11},  \eqref{eq:qcct1}  and    \eqref{eq:qcs1}.   For establish these results we will combine conveniently the results of the previous section.   We begin with the following result:
\begin{proposition} \label{pr:qc}
Let $\{z_k\}$ be a sequence in $\cal M$ and  $C >0$. If  $\{z_k\}$ converges to $z_*$ and  satisfies
\begin{equation} \label{eq:qcfr}
d(z_k,z_{k+1})\leq C  d(z_{k-1},z_k)^2, \qquad k=1,2, \ldots.
\end{equation}
then $\{z_k\}$ converges $Q$-quadratically to $z_*$ as follows
$$
\limsup_{k\to \infty } \frac{d(z_{k+1},z_{*})}{d(z_{k},z_{*})^2}\leq C.
$$
\end{proposition}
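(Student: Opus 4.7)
The plan is to convert the quadratic one-step estimate $d(z_k,z_{k+1}) \leq C\,d(z_{k-1},z_k)^2$ into a quadratic estimate on the tail distances $b_k := d(z_k,z_*)$, using that the displacements $a_k := d(z_k,z_{k+1})$ decay super-exponentially once they are small. Since $z_k \to z_*$, the triangle inequality gives $a_k \leq b_k + b_{k+1} \to 0$, so the estimates below may assume $Ca_k$ is as small as needed.

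The first step is to iterate the hypothesis: an easy induction shows that for $j \geq k$,
\[
a_j \;\leq\; \frac{1}{C}(Ca_k)^{2^{j-k}}.
\]
Summing and using $b_{k+1} \leq \sum_{j \geq k+1} a_j$, together with the elementary inequality $(Ca_k)^{2^m} \leq (Ca_k)^{m+1}$ for $m \geq 1$ (valid as soon as $Ca_k \leq 1$), the resulting ``tower'' series collapses to an ordinary geometric one, yielding
\[
b_{k+1} \;\leq\; \frac{Ca_k^{\,2}}{1-Ca_k}.
\]

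The second step is to bring $b_k$ into the picture. The triangle inequality in the form $b_k \geq a_k - b_{k+1}$, combined with the displayed bound on $b_{k+1}$, gives $b_k \geq a_k(1-2Ca_k)/(1-Ca_k)$, i.e.\ $a_k \leq b_k(1-Ca_k)/(1-2Ca_k)$. Substituting this back produces $b_{k+1} \leq C b_k^{\,2}(1-Ca_k)/(1-2Ca_k)^2$, and as $k\to\infty$ the coefficient multiplying $b_k^{\,2}$ tends to $C$, giving the desired $\limsup_{k\to\infty} b_{k+1}/b_k^{\,2} \leq C$.

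The step I expect to require the most care is the second one: a direct substitution $a_k \leq b_k + b_{k+1}$ into the tail bound $b_{k+1} \leq Ca_k^{\,2}/(1-Ca_k)$ produces a self-referential inequality for $b_{k+1}$ that is awkward to manipulate. Using the triangle inequality the other way round---to \emph{lower}-bound $b_k$ in terms of $a_k$ and the already-controlled $b_{k+1}$---avoids this circularity cleanly. The degenerate case $b_k = 0$ for some $k$ forces, via the hypothesis, the sequence to be eventually constant equal to $z_*$, and may be dismissed separately.
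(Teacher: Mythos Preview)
Your argument is correct. The paper itself does not actually prove this proposition: its entire proof reads ``The proof follows the same pattern as the proof of Proposition~1.2 of \cite{FG2013}.'' So there is no in-paper argument to compare against; you have supplied what the authors omitted. The two-step scheme you use---first controlling the tail $b_{k+1}$ by the doubly-exponential decay of the steps $a_j$, then lower-bounding $b_k$ via $b_k\ge a_k-b_{k+1}$ to close the loop---is the standard route and almost certainly what the cited reference does as well.

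One small correction to your closing remark on the degenerate case. A single occurrence of $b_k=0$ does \emph{not}, via the hypothesis alone, force eventual constancy: $d(z_k,z_{k+1})\le C\,d(z_{k-1},z_k)^2$ with $z_k=z_*$ only bounds $b_{k+1}$ by $C\,b_{k-1}^2$, not by zero. What does rule the bad case out is your own tail estimate. If $b_k=0$ and $b_{k+1}>0$ then $a_k=b_{k+1}$, and your bound $b_{k+1}\le Ca_k^2/(1-Ca_k)$ becomes $b_{k+1}(1-Cb_{k+1})\le C\,b_{k+1}^2$, i.e.\ $b_{k+1}\ge 1/(2C)$. Since $b_k\to 0$, this is impossible for large $k$, so eventually either $b_k>0$ for all $k$ or the sequence is constant at $z_*$; in both cases your main estimate applies and gives the claimed $\limsup$. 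This is a cosmetic fix---the substance of your proof stands.
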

\begin{proof}
The proof follows the same pattern as the proof of Proposition 1.2  of \cite{FG2013}.
\end{proof}
Using   equality in \eqref{ns.KT1} and \eqref{NF},  the sequence $\{p_k\}$    generated by Newton's Method satisfies
\begin{equation} \label{NFS}
 p_{k+1}= N_{X}(p_k),\qquad k=0,1,\ldots \,.
\end{equation}
This equivalent  definition of the Newton's sequence $\{p_k\}$ allow us to  use the results of the previous section to establishes its properties of  convergence.
\begin{corollary}\label{cor:knt1}
The sequence $\{p_k\}$  is well defined, is contained in $B(p_0,  t_*)$ and   satisfies  the inequalities in \eqref{eq:bd11}. Moreover,   $\{p_k\}$ converges   to  a point $p_*\in B[p_0, t_*]$  satisfying   $X(p_*)=0$ and its  convergence rate is  $R$-linear   as  in \eqref{eq:lc11}.  If, additionally, $f$ satisfies {\bf h4} then the inequality \eqref{eq:qcct1} holds and, consequently,    $\{p_k\}$ converges $Q$-quadratically  to $p_*$  as in  \eqref{eq:qcs1}.
\end{corollary}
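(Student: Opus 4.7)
The plan is to push the scalar Newton iteration studied in Corollary~\ref{cr:kanttk} onto the manifold via the invariance $N_X(K(t)) \subset K(n_f(t))$ proved in Lemma~\ref{NfNF}, using the quantitative estimates in Lemma~\ref{l:iset1} as the engine for the step-by-step bounds.

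First I would observe that $p_0 \in K(0)$: since $f'(0)=-1$, hypothesis~\eqref{KH.1} reads $\|\nabla X(p_0)^{-1}X(p_0)\| \leq f(0) = -f(0)/f'(0)$, and $d(p_0,p_0)=0$ is trivial. Induction using Lemma~\ref{NfNF} together with $p_{k+1}=N_X(p_k)$ from \eqref{NFS} and $t_{k+1}=n_f(t_k)$ from \eqref{eq:tknk} then gives $p_k \in K(t_k) \subset B[p_0,t_k] \subset B(p_0,t_*)$ for every $k$, which simultaneously establishes well-definedness and the claimed containment. The first inequality of \eqref{eq:bd11} follows from $d(p_k,p_{k+1}) \leq \|\nabla X(p_k)^{-1}X(p_k)\|$ (the Newton step is a geodesic of that length) combined with item~(i) of Lemma~\ref{l:iset1}, which bounds the right-hand side by $-f(t_k)/f'(t_k)=t_{k+1}-t_k$. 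The second inequality of \eqref{eq:bd11} comes from item~(iii) of Lemma~\ref{l:iset1} applied at $(p_{k-1},t_{k-1})$, after rewriting the scalar ratios as $t_k-t_{k-1}$ and $t_{k+1}-t_k$.

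For convergence, the bound $d(p_k,p_{k+1}) \leq t_{k+1}-t_k$ together with the Cauchyness of $\{t_k\}$ (Corollary~\ref{cr:kanttk}) shows that $\{p_k\}$ is Cauchy in the complete manifold $\mathcal M$; hence it converges to some $p_* \in B[p_0,t_*]$ with $d(p_k,p_*) \leq t_*-t_k$, giving the first estimate of \eqref{eq:lc11} (the second is inherited directly from Corollary~\ref{cr:kanttk}). To identify $p_*$ as a zero of $X$, I would estimate $\|X(p_k)\| \leq \|\nabla X(p_k)\| \cdot \|\nabla X(p_k)^{-1} X(p_k)\|$, bounding the first factor uniformly via Proposition~\ref{l:norm-X'(p)} and the second by $-f(t_k)/f'(t_k) \to 0$, and then pass to the limit using continuity of $X$ on $\bar\Omega$.

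Under the additional hypothesis {\bf h4}, Corollary~\ref{cr:kanttk} supplies the scalar quadratic bound \eqref{eq:qcct}; combining this with item~(iii) of Lemma~\ref{l:iset1} yields \eqref{eq:qcct1}, and Proposition~\ref{pr:qc} converts this one-step quadratic recursion into the limiting $Q$-quadratic rate~\eqref{eq:qcs1} via the already established convergence $p_k \to p_*$. The main delicacy I anticipate is rewriting the quadratic recursion of Lemma~\ref{l:iset1}(iii), which is naturally phrased in terms of Newton-step norms $\|\nabla X(p_{k-1})^{-1}X(p_{k-1})\|$, into the distance-based form~\eqref{eq:qcct1}, and then uniformly replacing the scalar ratio $(t_{k+1}-t_k)/(t_k-t_{k-1})^2$ by the constant $D^-f'(t_*)/[-2f'(t_*)]$ supplied by Corollary~\ref{cr:kanttk}.
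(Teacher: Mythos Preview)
Your proposal is correct and follows essentially the same route as the paper: start from $p_0\in K(0)$, propagate $p_k\in K(t_k)$ by Lemma~\ref{NfNF}, read off the step bounds from Lemma~\ref{l:iset1}~(i) and~(iii), pass to a Cauchy limit, certify $X(p_*)=0$ via Proposition~\ref{l:norm-X'(p)}, and upgrade to $Q$-quadratic convergence under {\bf h4} using Corollary~\ref{cr:kanttk} and Proposition~\ref{pr:qc}. The only cosmetic difference is that the paper writes $d(p_k,p_{k+1})=\|\nabla X(p_k)^{-1}X(p_k)\|$ where you (more cautiously) write $\leq$; your remark about the ``main delicacy'' in passing from Newton-step norms to distances is exactly the point where that equality is used.
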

\begin{proof}
We are going to prove that  the sequence $\{p_k\}$  is well defined.    First note that,   combining \eqref{E:K}, \eqref{KH.1} and {\bf h1} we have
\begin{equation} \label{eq:poinko}
p_{0} \in K(0)\subset K,
\end{equation}
where the second inclusion follows trivially from  \eqref{eq:def.K}.  Using the above inclusion,   the inclusion $N_{X}(K)\subset K$ in Lemma \ref{NfNF} and \eqref{NFS} we conclude that  $\{p_k\}$ is well defined and rests in $K $.  From the first inclusion on second part of the Lemma \ref{NfNF}
we have trivially that $\{p_k\}$ is contained in $B(p_0, t_*)$.

 Now we are going to prove the inequalities in \eqref{eq:bd11}. First we will prove, by induction that
\begin{equation} \label{eq:xktk}
        p_k\in K(t_k), \qquad k=0,1,\ldots \,.
\end{equation}
The above inclusion for  $k=0$  follows from   \eqref{eq:poinko}.
Assume now that $p_k\in K(t_k)$. Thus, using  Lemma~\ref{NfNF}, \eqref{NFS} and \eqref{eq:tknk},  we obtain that $p_{k+1}\in K(t_{k+1}),$ which completes the induction  proof of \eqref{eq:xktk}.  Using definition of $\{t_k\}$   in  \eqref{ns.KT1} , we have $- f(t_k)/f'(t_k)= t_{k+1}-t_{k}$. Hence combining   definition of $\{p_k\}$   in  \eqref{ns.KT1} with    \eqref{eq:xktk} and item~{\bf i} of Lemma~\ref{l:iset1}, we obtain
\begin{equation} \label{eq:eicm1}
d(p_k, p_{k+1})=\|\nabla X(p_k)^{-1}X(p_k)\| \leq t_{k+1}-t_k,  \qquad k=0,1,\ldots.
\end{equation}
which is   first inequality in    \eqref{eq:bd11}.   In order to prove  the second inequality in \eqref{eq:bd11}, first note that $  p_{k-1}\in K(t_{k-1})$, $p_{k}= N_{X}(p_{k-1})$  and $t_{k}=n_f(t_{k-1})$, for all $k=0,1,\ldots .$
Thus,  apply item~{\bf iii} of  Lemma~\ref{l:iset1}  with $p=p_{k-1}$   and $t=t_{k-1}$
to obtain
$$
d(p_k, p_{k+1})\leq -\frac{f(t_k)}{f'(t_k)}\left[ \frac{d(p_{k-1}, p_k)}{t_{k}-t_{k-1}}\right]^2,
$$
which using second inequality in \eqref{ns.KT1}  yields the desired  inequality.

To prove that   $\{p_k\}$ converges  to  $p_*\in B[p_0, t_*]$ with $X(p_*)=0$ and \eqref{eq:bd11} holds, first note that as $\{t_k\}$ converges to $t_*$, the first  inequality \eqref{eq:bd11} implies
\begin{equation} \label{eq:ch}
   \sum_{k=k_0}^\infty d(p_{k+1}, p_{k})   \leqslant
   \sum_{k=k_0}^\infty t_{k+1}-t_k =t_*-t_{k_0}<+\infty,
\end{equation}
for any $k_0\in\mathbb{N}$. Hence, $\{p_k\}$ is a Cauchy sequence in
$B(p_0, t_*)$ and ,thus, converges to some $p_*\in B[p_0,t_*]$.
Therefore,   first  inequality \eqref{eq:bd11} also implies  that $d(p_*, p_k)\leq t_*-t_k$ for any $k$. Hence, the inequality \eqref{eq:bd11} holds and,  as   $\{t_k\}$ converges $Q$-linearly to $t_*$,   $\{p_k\}$ converges $R$-linearly to $p_*$.   For  proving  that $X(p_*)=0$, note that first inequality in    \eqref{eq:bd11} implies that $d(p_0, p_k)\leq t_k-t_0=t_k$. Thus using Proposition~\ref{l:norm-X'(p)} we have
$$
 \left\|\nabla X(p_k)\right\| \leq \|\nabla X(p_0)\|(2+f'(t_k)) ,\qquad k=0,1,\ldots\,,
$$
which combining inclusion \eqref{eq:xktk} and second   inequality in \eqref{eq:eicm1} yields
$$
\|X(p_k)\|  \leq  \|\nabla X(p_k)\|\|\nabla X(p_k)^{-1}X(p_k)\|   \leq  \|\nabla X(p_0)\|(2+f'(t_k))(t_{k+1}-t_k),\quad k=0,1,\ldots\,.
$$
Since $X$ is  continuous on $\bar{\Omega}$, $\{p_k\}\subset B(p_0, t_*)  \subset \bar{\Omega}$, $\{p_k\}$  converges to $p_*\in \bar{\Omega}$, the result follows by taking limit as $k$ goes to infinite  in   above inequality.

Now,  we assume that {\bf h4} holds. Thus, combining  second inequality in \eqref{eq:bd11} with  \eqref{eq:qcct}, we obtain the  inequality in \eqref{eq:qcct1}. To establish the  inequality in  \eqref{eq:qcs1}, use inequality in \eqref{eq:qcct1} and  Proposition~\ref{pr:qc} with $z_k=p_k$ and $C=D^-f'(t_*)/(-2 f'(t_*))$. Therefore, the proof is concluded.

\end{proof}
\subsection{ Uniqueness}
In this section we prove the last statement in Theorem~\ref{th:knt1}, namely, the uniqueness of the singularity of the vector field in consideration.  The results  of this section  generalize   \cite[ Section~ 3.2 ]{FerreiraSvaiter2009} for a general majorant function, see also  \cite[ Section~ 4.2 ]{Alvarez2008}.
\begin{corollary}\label{cor:sing}
Take $0 \leq t < t_*$ and $q \in K(t)$. Define $$\tau_0=t, \qquad \tau_{k+1}=\tau_k-f(\tau_k)/f'(\tau_k),\qquad k=0,1,... .$$
The sequence $\{q_k\}$ generated by Newton's method with starting point $q_0=q$ is well defined and satisfies $q_k \in K(\tau_k)$,  for all k. Furthermore,   $\{\tau_k\}$ converges to $t_*$, $\{q_k\}$ converges to some $q_* \in B[p_0,t_*]$ a singular point of $X$ and $d(q_k,q_*) \leq t_*-\tau_k$,  for all $k$.
\end{corollary}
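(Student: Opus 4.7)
The plan is to mirror the proof of Corollary~\ref{cor:knt1}, replacing the starting pair $(p_0, t_0 = 0)$ by the pair $(q, \tau_0 = t)$, and to reuse the machinery already established: the invariance Lemma~\ref{NfNF}, the one-step bounds in Lemma~\ref{l:iset1}, the scalar convergence Corollary~\ref{cr:kanttauk}, and the derivative bound Proposition~\ref{l:norm-X'(p)}. The hypothesis $q \in K(t)$ with $t \in [0, t_*)$ is precisely what is needed to play the role of $p_0 \in K(0)$.

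First I would prove by induction that $\{q_k\}$ is well defined and $q_k \in K(\tau_k)$ for every $k$. The base case $k=0$ is the assumption $q_0 = q \in K(t) = K(\tau_0)$. For the inductive step, assuming $q_k \in K(\tau_k)$ with $\tau_k \in [0,t_*)$ (which holds for all $k$ by Corollary~\ref{cr:kanttauk}), the inclusion $K(\tau_k) \subset B(p_0,t_*)$ from Lemma~\ref{NfNF} makes $\nabla X(q_k)$ nonsingular by Proposition~\ref{ineq:inv}, so that $q_{k+1} = N_X(q_k)$ is well defined, and the second inclusion of Lemma~\ref{NfNF} gives $q_{k+1} \in K(n_f(\tau_k)) = K(\tau_{k+1})$.

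Next I would derive the step bound $d(q_k, q_{k+1}) \leq \tau_{k+1}-\tau_k$. From $q_{k+1} = \mathrm{exp}_{q_k}(-\nabla X(q_k)^{-1}X(q_k))$ we have $d(q_k, q_{k+1}) \leq \|\nabla X(q_k)^{-1}X(q_k)\|$, and item~\textbf{i} of Lemma~\ref{l:iset1} applied with $p = q_k$, $t = \tau_k$ gives $\|\nabla X(q_k)^{-1}X(q_k)\| \leq -f(\tau_k)/f'(\tau_k) = \tau_{k+1}-\tau_k$. Since $\{\tau_k\}$ converges to $t_*$ by Corollary~\ref{cr:kanttauk}, telescoping yields
\[
\sum_{k=0}^{\infty} d(q_k, q_{k+1}) \leq \sum_{k=0}^{\infty}(\tau_{k+1}-\tau_k) = t_* - t < \infty,
\]
so $\{q_k\}$ is Cauchy in the complete metric space $(\mathcal{M}, d)$ and converges to some $q_* \in \mathcal{M}$. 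The residual sum bound gives $d(q_k, q_*) \leq t_* - \tau_k$ for every $k$; letting $k=0$ and combining with $d(p_0, q_0) \leq t$ (from $q_0 \in K(t)$) yields $d(p_0, q_*) \leq t_*$, so $q_* \in B[p_0, t_*]$.

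Finally, to show $X(q_*) = 0$, I would apply Proposition~\ref{l:norm-X'(p)} to $q_k$ (using $d(p_0, q_k) \leq \tau_k < t_* \leq R$) together with the bound $\|\nabla X(q_k)^{-1}X(q_k)\| \leq \tau_{k+1}-\tau_k$ to obtain
\[
\|X(q_k)\| \leq \|\nabla X(q_k)\|\,\|\nabla X(q_k)^{-1}X(q_k)\| \leq \|\nabla X(p_0)\|(2 + f'(\tau_k))(\tau_{k+1}-\tau_k).
\]
Since $\tau_{k+1}-\tau_k \to 0$ and $f'(\tau_k)$ stays bounded as $\tau_k \to t_* < R$, the right side tends to $0$; continuity of $X$ on $\bar{\Omega}$ and $q_k \to q_* \in \bar{\Omega}$ then give $X(q_*) = 0$. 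The main (mild) obstacle is purely bookkeeping: verifying that each invocation of Lemma~\ref{NfNF} and Lemma~\ref{l:iset1} remains legal when we start the induction at $\tau_0 = t > 0$ instead of $0$, which reduces to checking $\tau_k \in [0,t_*)$ for all $k$, guaranteed by Corollary~\ref{cr:kanttauk}.
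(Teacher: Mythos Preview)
Your proposal is correct and follows essentially the same approach as the paper: the paper's proof is a one-line sketch saying the result follows by combining Lemma~\ref{NfNF}, Corollary~\ref{cr:kanttauk}, and Proposition~\ref{l:norm-X'(p)} along the pattern of Corollary~\ref{cor:knt1}, and you have simply written out those details in full.
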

\begin{proof} The proof is a convenient combination of  Lemma \ref{NfNF},    Corollary~\ref{cr:kanttauk} and   Proposition~\ref{l:norm-X'(p)}, following the  same pattern of   Corollary 3.6 of  \cite{FerreiraSvaiter2009}.
\end{proof}
The next  two lemmas are most important results we need to  prove the uniqueness of solution.  The idea of its proofs are similar to the corresponding results of  \cite {FerreiraSvaiter2009}, see also \cite{Alvarez2008}. In  this more general approach, some technical details related to the parallel transport and the majorant function (possibly  non-quadractic) should be used.
\begin{lemma}\label{le:iterteta}Take $0 \leq t < t_*$ and $p \in K(t)$. Define for $\theta \in \mathbb{R}$
$$
\zeta(\theta)=exp_p(-\theta \nabla X(p)^{-1}X(p)), \qquad \quad  \tau(\theta) = t - \theta\dfrac{f(t)}{f'(t)}.
$$
Then for $\theta \in [0,1]$   we have $t \leq \tau(\theta) < t_*$ and $\zeta(\theta) \in K(\tau(\theta)).$
\begin{proof}
The proof follows  the  same pattern of   \cite[Lemma 3.7] {FerreiraSvaiter2009}, see also  \cite[Lemma 4.4]{Alvarez2008}.
\end{proof}
\end{lemma}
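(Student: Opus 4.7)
The plan is to verify the two claims separately. For the interval bound $t\le\tau(\theta)<t_*$, observe that $\tau$ is affine in $\theta$ with $\tau(0)=t$ and $\tau(1)=n_f(t)$, and that its slope $-f(t)/f'(t)\ge 0$ is nonnegative by Proposition~\ref{pr:maj.f}; Proposition~\ref{pr:2} then delivers $t\le\tau(\theta)\le n_f(t)<t_*$. For the inclusion $\zeta(\theta)\in K(\tau(\theta))$ I will verify the two defining conditions of \eqref{E:K} in turn. The distance inequality $d(p_0,\zeta(\theta))\le\tau(\theta)$ follows from the triangle inequality together with $d(p_0,p)\le t$ and the fact that the geodesic $\zeta|_{[0,\theta]}$ has length exactly $\theta\|\nabla X(p)^{-1}X(p)\|\le\theta(-f(t)/f'(t))$ (using $p\in K(t)$), so the two summands combine to $\tau(\theta)$.

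The principal task is the Newton-step estimate $\|\nabla X(\zeta(\theta))^{-1}X(\zeta(\theta))\|\le -f(\tau(\theta))/f'(\tau(\theta))$, and I will mirror the strategy of item~\textbf{iii} of Lemma~\ref{l:iset1}. Build $\xi\in{\cal G}_2(p_0,R)$ by concatenating a minimizing geodesic from $p_0$ to $p$ on $[0,1]$ with $\xi(1+s)=\exp_p(-s\theta\nabla X(p)^{-1}X(p))$ on $[1,2]$, so that $\xi(2)=\zeta(\theta)$ and $\ell[\xi,0,2]\le\tau(\theta)<t_*$. Proposition~\ref{ineq:inv} then furnishes $\|\nabla X(\zeta(\theta))^{-1}P_{\xi,0,2}\nabla X(p_0)\|\le 1/|f'(\tau(\theta))|$, and the target inequality reduces to proving $\|\nabla X(p_0)^{-1}P_{\xi,2,0}X(\zeta(\theta))\|\le f(\tau(\theta))$. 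The linearization-error identity applied to $\xi|_{[1,2]}$, combined with $\nabla X(p)\xi'(1)=-\theta X(p)$, produces
\[
X(\zeta(\theta))=(1-\theta)\,P_{\xi,1,2}X(p)+E(p,\zeta(\theta)),
\]
which is to be matched against the parallel Taylor identity on the majorant side, $f(\tau(\theta))=(1-\theta)f(t)+e(t,\tau(\theta))$, itself immediate from $\tau(\theta)-t=-\theta f(t)/f'(t)$ and the definition of $e$ in \eqref{eq:errorf}.

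The main obstacle is the residual $(1-\theta)X(p)$, which would vanish in the Newton case $\theta=1$ but survives here. The linearization-error term is easily handled by Lemma~\ref{ineq:errors} with $a=1$, $b=2$, $x=\tau(\theta)$ and $\ell[\xi,1,2]\le\tau(\theta)-t$, yielding $\|\nabla X(p_0)^{-1}P_{\xi,2,0}E(p,\zeta(\theta))\|\le e(t,\tau(\theta))$. What remains is matching the $(1-\theta)P_{\xi,1,0}X(p)$ contribution with $(1-\theta)f(t)$; I would extract the required bound $\|\nabla X(p_0)^{-1}P_{\xi,1,0}X(p)\|\le f(t)$ from $p\in K(t)$ by applying the Fundamental Theorem of Calculus (Lemma~\ref{le:TFC}) to $X$ along the minimizing geodesic $\xi|_{[0,1]}$, inserting $\pm\nabla X(p_0)$ inside the integrand, and combining the majorant condition \eqref{eq:mc}, the initial inequality \eqref{KH.1}, and the convexity of $f'$ via Proposition~\ref{pr:conv.aux1}, following the template of \cite[Lemma~3.7]{FerreiraSvaiter2009} and \cite[Lemma~4.4]{Alvarez2008}. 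Summing the two contributions gives $\|\nabla X(p_0)^{-1}P_{\xi,2,0}X(\zeta(\theta))\|\le(1-\theta)f(t)+e(t,\tau(\theta))=f(\tau(\theta))$, which together with the bound from Proposition~\ref{ineq:inv} closes the proof.
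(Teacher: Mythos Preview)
Your overall architecture is right, and the parts concerning $\tau(\theta)$, the distance bound $d(p_0,\zeta(\theta))\le\tau(\theta)$, the identity $X(\zeta(\theta))=(1-\theta)P_{\xi,1,2}X(p)+E(p,\zeta(\theta))$, the scalar identity $f(\tau(\theta))=(1-\theta)f(t)+e(t,\tau(\theta))$, and the control of $E(p,\zeta(\theta))$ via Lemma~\ref{ineq:errors} are all correct.

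The gap is in the treatment of the residual $(1-\theta)P_{\xi,1,2}X(p)$. You assert that
\[
\left\|\nabla X(p_0)^{-1}P_{\xi,1,0}X(p)\right\|\le f(t)
\]
can be obtained from $p\in K(t)$ by applying Lemma~\ref{le:TFC} along $\xi|_{[0,1]}$, inserting $\pm\nabla X(p_0)$, and using \eqref{eq:mc}, \eqref{KH.1} and convexity of $f'$. But that computation is exactly the one carried out in Proposition~\ref{cor:majorX}, and it yields only
\[
\left\|\nabla X(p_0)^{-1}P_{\xi,1,0}X(p)\right\|\le f(d(p_0,p))+2\,d(p_0,p),
\]
which for $d(p_0,p)$ close to $t>0$ is strictly larger than $f(t)$ (since $s\mapsto f(s)+2s$ is increasing and $f(t)+2t>f(t)$). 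Trying instead to pass through $\nabla X(p)$ and use $\|\nabla X(p)^{-1}X(p)\|\le f(t)/|f'(t)|$ is no better: the available bound $\|\nabla X(p_0)^{-1}P_{\xi,1,0}\nabla X(p)\|\le f'(t)+2$ gives the product $(f'(t)+2)\,f(t)/|f'(t)|\ge f(t)$, with equality only at $t=0$. So the inequality you need at this step does not follow from the ingredients you list.

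The route actually taken in \cite[Lemma~3.7]{FerreiraSvaiter2009} and \cite[Lemma~4.4]{Alvarez2008} avoids bounding $\|\nabla X(p_0)^{-1}P_{\xi,1,0}X(p)\|$ altogether: the residual is factored through $\nabla X(p)$, not $\nabla X(p_0)$. One shows, via a Banach--lemma argument applied to $B=\nabla X(p)^{-1}P_{\xi,2,1}\nabla X(\zeta(\theta))P_{\xi,1,2}$ (using Proposition~\ref{ineq:inv} at $p$ together with the majorant condition \eqref{eq:mc} on $\xi|_{[1,2]}$ and the convexity of $f'$), that
\[
\left\|\nabla X(\zeta(\theta))^{-1}P_{\xi,1,2}\nabla X(p)\right\|\le\frac{|f'(t)|}{|f'(\tau(\theta))|}.
\]
Combined with $\|\nabla X(p)^{-1}X(p)\|\le f(t)/|f'(t)|$ this yields $(1-\theta)\|\nabla X(\zeta(\theta))^{-1}P_{\xi,1,2}X(p)\|\le (1-\theta)f(t)/|f'(\tau(\theta))|$, and together with your error estimate one obtains $\|\nabla X(\zeta(\theta))^{-1}X(\zeta(\theta))\|\le f(\tau(\theta))/|f'(\tau(\theta))|$, as required.
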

\begin{lemma}\label{le:iguald}
Take $0 \leq t < t_*$ and $p \in K(t)$. Suppose that $q_* \in B[p_0,t_*]$ is a singular point of $X$ and $t+d(p,q_*)=t_*.$ Then $d(p_0,p)=t.$ Furthermore,   $t <  n_f(t) < t_*$, $ N_{X}(p) \in K( n_f(t))$ and $n_f(t) + d(N_{X}(p), q_*)=t_*.$
\begin{proof}
The proof follows  the  same pattern of   \cite[Lemma 3.8] {FerreiraSvaiter2009}, see also  \cite[Lemma 4.5]{Alvarez2008}.
\end{proof}
\end{lemma}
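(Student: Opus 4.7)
Plan: The proof follows the pattern of \cite[Lemma~3.8]{FerreiraSvaiter2009} and \cite[Lemma~4.5]{Alvarez2008}, adapted to the majorant-function framework of the present paper. Set $v := -\nabla X(p)^{-1} X(p)$, so that $N_X(p) = \exp_p(v)$. From $p \in K(t)$ and Lemma~\ref{l:iset1}, we already have $d(p_0,p) \leq t$, $\|v\| \leq n_f(t)-t$, and $d(p_0,p) + \|v\| \leq n_f(t)$, while the inequality $t < n_f(t) < t_*$ is immediate from Proposition~\ref{pr:2}.

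The core step is a linearization-error estimate applied to the piecewise minimizing geodesic $\xi \in \mathcal{G}_2(p_0, R)$ from $p_0$ to $p$ to $q_*$, parametrized by arc length with $\xi(a)=p$, $\xi(b)=q_*$, $a=d(p_0,p)$, $b-a=d(p,q_*)=t_*-t$. Let $w \in T_pM$ be the initial tangent vector of $\xi_{\mid_{[a,b]}}$ scaled so that $\exp_p(w)=q_*$, giving $\|w\|=t_*-t$. Apply Lemma~\ref{ineq:errors} with parameters $(t, t_*)$ (permissible since $t_* < R$): using $X(q_*)=0$ in Definition~\ref{de:errorX} yields $E(p,q_*) = -P_{\xi,a,b}\nabla X(p)(w-v)$, and inverting through Proposition~\ref{ineq:inv} while simplifying via $f(t_*)=0$ and $n_f(t)-t = -f(t)/f'(t)$ gives the key tangent-space bound
\[
\|w - v\| \;\leq\; \frac{e(t,t_*)}{|f'(t)|} \;=\; t_* - n_f(t).
\]
The reverse triangle inequality $\|v\| \geq \|w\| - \|w - v\|$ then yields $\|v\| \geq n_f(t)-t$, which combined with the opposite bound forces $\|v\| = n_f(t)-t$.

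The equality $d(p_0,p) = t$ is obtained by reapplying Lemma~\ref{ineq:errors} with the sharper parameters $(a,\, t_*-t+a)$, combining the resulting bound with the lower bound $\|w-v\| \geq t_* - n_f(t)$ (from the reverse triangle inequality now that $\|v\|$ is pinned down); a monotonicity argument using the convexity and strict monotonicity of $f'$ in condition \textbf{h2} forces $a \geq t$, and hence $a = t$. The remaining claim $N_X(p) \in K(n_f(t))$ follows from Lemma~\ref{l:iset1} item~(iii) together with $\|v\| = n_f(t)-t$. For the final equality $n_f(t) + d(N_X(p), q_*) = t_*$, the lower bound $d(N_X(p), q_*) \geq t_* - n_f(t)$ comes from the triangle inequality $d(N_X(p),q_*) \geq d(p,q_*) - d(p,N_X(p)) \geq (t_*-t) - \|v\|$, while the matching upper bound is obtained by exhibiting a curve in $M$ from $N_X(p)=\exp_p(v)$ to $q_*=\exp_p(w)$ of length at most $\|w-v\|$ (for instance, the image under $\exp_p$ of the straight segment between $v$ and $w$ in $T_pM$).

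The main obstacle lies precisely in this last step: converting the tangent-space bound $\|w-v\| \leq t_* - n_f(t)$ into the manifold-distance bound $d(\exp_p(v), \exp_p(w)) \leq t_* - n_f(t)$ on a general Riemannian manifold without curvature assumptions. One must verify that the segment between $v$ and $w$ lies in a subset of $T_pM$ on which $\exp_p$ behaves well (which is guaranteed here by the norm controls $\|v\|, \|w\| \leq t_*-t$ inherited from $p \in K(t)$ and Proposition~\ref{ineq:inv}); this delicate geometric comparison is the heart of the argument and is handled exactly as in \cite[Lemma~3.8]{FerreiraSvaiter2009}.
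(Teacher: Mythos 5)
Your decomposition into sub-claims and the derivation of the tangent-space estimate $\|w - v\| \leq t_* - n_f(t)$ --- via Lemma~\ref{ineq:errors} on the piecewise geodesic through $p_0$, $p$, $q_*$, the identity $E(p,q_*) = -P_{\xi,a,b}\nabla X(p)(w-v)$ (using $X(q_*)=0$ and $X(p)=-\nabla X(p)v$), and the inversion supplied by Proposition~\ref{ineq:inv} --- are correct, as are the consequences $\|v\| = n_f(t)-t$ and $d(p_0,p)=t$; the monotonicity argument you invoke for the latter does go through, since $s\mapsto e(s,s+c)$ is non-decreasing (convexity of $f'$) while $s\mapsto 1/|f'(s)|$ is strictly increasing on $[0,\bar t)$. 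The gap is exactly where you flag it: the upper bound $d(N_X(p), q_*) \leq t_* - n_f(t)$. Your proposed fix --- push the segment from $v$ to $w$ in $T_pM$ forward under $\exp_p$ and claim its image has length at most $\|w-v\|$ --- is false on a general Riemannian manifold: $\exp_p$ is not $1$-Lipschitz without a lower curvature bound, and the norm controls $\|v\|,\|w\| \leq t_*-t$ say nothing about curvature. Nor is this ``handled exactly as in \cite[Lemma~3.8]{FerreiraSvaiter2009}'': that reference is set in a Banach space, where $\exp_p$ is translation by $p$ and the issue does not arise at all.

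The missing idea is that your chain of estimates is forced to be a chain of equalities, and equality propagates geometric rigidity. From $t_* - n_f(t) \geq \|w - v\| \geq \|w\| - \|v\| \geq (t_*-t) - (n_f(t)-t) = t_* - n_f(t)$ every link is an equality; in particular $\|w\| = \|w-v\| + \|v\|$. Because $T_pM$ carries an \emph{inner-product} norm and $v \neq 0$ (as $\|v\| = n_f(t)-t > 0$), equality in the triangle inequality forces $w - v$ to be a non-negative multiple of $v$, hence $w = \lambda v$ with $\lambda = (t_*-t)/(n_f(t)-t) > 1$. Consequently $N_X(p) = \exp_p(v) = \exp_p(w/\lambda)$ lies on the chosen minimizing geodesic $\theta \mapsto \exp_p(\theta w)$, $\theta\in[0,1]$, from $p$ to $q_*$. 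A subsegment of a minimizing geodesic is minimizing, so $d(N_X(p),q_*)$ equals the arc length from parameter $1/\lambda$ to $1$, namely $(1-1/\lambda)(t_*-t)=t_*-n_f(t)$. This gives the upper (indeed exact) bound and, combined with $N_X(p)\in K(n_f(t))$ from Lemma~\ref{NfNF}, closes the proof without any curvature comparison or assumption on the behavior of $\exp_p$.
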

The  proof of the next two results  can be obtained by a  simple adaptation of some arguments of   \cite[Corollary 3.9]{FerreiraSvaiter2009} and  \cite[Lemma 3.10]{FerreiraSvaiter2009}, see also  \cite[Lemma 4.5]{Alvarez2008} and  \cite[Section 4.2.2]{Alvarez2008},   we also omit their proofs.
\begin{corollary}\label{cor:iguald}
Suppose that $\tilde q_* \in B[p_0,t_*]$ is a singular point of $X$. If for some $\tilde{t},\tilde{q}$ $$0 \leq \tilde{t} < t_*, \,\,\,\tilde{q}\in K(\tilde{t}),$$
and $\tilde{t}+d(\tilde{q},\tilde q_*)=t_*,$ then $d(p_0, \tilde q_*)=t_*.$
\end{corollary}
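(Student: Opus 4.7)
The plan is to iterate the Newton map $N_X$ starting at $(\tilde t,\tilde q)$ and use Lemma~\ref{le:iguald} to propagate the three key properties at each step, then pass to the limit. Concretely, I would define the sequences
\[
\tilde q_0:=\tilde q,\quad \tilde q_{k+1}:=N_X(\tilde q_k),\qquad
\tilde\tau_0:=\tilde t,\quad \tilde\tau_{k+1}:=n_f(\tilde\tau_k),
\]
and prove by induction on $k$ that $\tilde\tau_k\in[0,t_*)$, $\tilde q_k\in K(\tilde\tau_k)$, $d(p_0,\tilde q_k)=\tilde\tau_k$, and $\tilde\tau_k+d(\tilde q_k,\tilde q_*)=t_*$. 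The base case $k=0$ is exactly the hypothesis (noting that Lemma~\ref{le:iguald} applied at $k=0$ itself yields $d(p_0,\tilde q)=\tilde t$). The inductive step is a direct invocation of Lemma~\ref{le:iguald} with $t=\tilde\tau_k$, $p=\tilde q_k$, which delivers all four properties at level $k+1$.

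Next, I would argue that $\{\tilde\tau_k\}$ and $\{\tilde q_k\}$ both converge. For the scalar sequence, Corollary~\ref{cr:kanttauk} (applied with starting point $\tilde\tau_0\in[0,t_*)$) gives $\tilde\tau_k\nearrow t_*$. For the Riemannian sequence, Corollary~\ref{cor:sing} (with $t=\tilde t$, $q=\tilde q$) guarantees that $\{\tilde q_k\}$ is well defined and converges to some $q_\star\in B[p_0,t_*]$ that is a singular point of $X$, with $d(\tilde q_k,q_\star)\le t_*-\tilde\tau_k\to 0$.

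To finish, I would identify $q_\star$ with $\tilde q_*$. The induction relation $d(\tilde q_k,\tilde q_*)=t_*-\tilde\tau_k$ forces $\tilde q_k\to\tilde q_*$, so by uniqueness of limits $q_\star=\tilde q_*$. Finally, letting $k\to\infty$ in the identity $d(p_0,\tilde q_k)=\tilde\tau_k$ and using continuity of the Riemannian distance yields
\[
d(p_0,\tilde q_*)=\lim_{k\to\infty}d(p_0,\tilde q_k)=\lim_{k\to\infty}\tilde\tau_k=t_*,
\]
which is the desired conclusion.

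The only nontrivial obstacle is making sure the inductive step actually applies: one must verify that the hypotheses of Lemma~\ref{le:iguald} are satisfied at each level, in particular that $\tilde\tau_k<t_*$ (handed over by $n_f([0,t_*))\subset[0,t_*)$ via Proposition~\ref{pr:2}) and that $\tilde q_*\in B[p_0,t_*]$ remains a singular point throughout (a hypothesis that is preserved trivially). Everything else is a clean combination of the previously established propositions.
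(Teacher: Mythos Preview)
Your argument is correct and follows the same strategy the paper points to (the proof is omitted there, with a reference to Corollary~3.9 of \cite{FerreiraSvaiter2009}): iterate Lemma~\ref{le:iguald} to propagate the relation $\tilde\tau_k + d(\tilde q_k,\tilde q_*) = t_*$ along the Newton sequence and then pass to the limit using $\tilde\tau_k\to t_*$. One small simplification: the detour through Corollary~\ref{cor:sing} and the identification $q_\star=\tilde q_*$ is unnecessary, since your own induction relation $d(\tilde q_k,\tilde q_*)=t_*-\tilde\tau_k\to 0$ already gives $\tilde q_k\to\tilde q_*$ directly.
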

\begin{lemma} \label{l:sbte}
The sequence $\{p_k\}$ has limit $p_*$ as the unique singular point of $X$ in $B[p_0,t_*]$.
\end{lemma}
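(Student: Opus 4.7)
The plan is to prove the uniqueness statement, since Corollary~\ref{cor:knt1} has already produced $p_* = \lim p_k \in B[p_0, t_*]$ with $X(p_*) = 0$. Thus it suffices to show that any other singular point $\tilde p_* \in B[p_0, t_*]$ of $X$ coincides with $p_*$.

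The key is the inductive distance bound
\[
d(p_k, \tilde p_*) \leq t_* - t_k, \qquad k = 0, 1, 2, \ldots,
\]
which, upon letting $k \to \infty$ (and using $t_k \to t_*$ and $p_k \to p_*$), forces $d(p_*, \tilde p_*) \leq 0$ and hence $\tilde p_* = p_*$. The base case $k = 0$ is immediate since $\tilde p_* \in B[p_0, t_*]$ and $t_0 = 0$.

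For the inductive step I adapt the architecture of the proof of Lemma~\ref{le:iguald}. Assume $d(p_k, \tilde p_*) \leq t_* - t_k$ and recall $p_k \in K(t_k)$ from \eqref{eq:xktk}. Let $\gamma$ be a minimizing geodesic from $p_k$ to $\tilde p_*$; concatenating it with a minimizing geodesic from $p_0$ to $p_k$ yields $\xi \in {\cal G}_2(p_0, R)$ of total length at most $t_k + (t_* - t_k) = t_* < R$. Using $X(\tilde p_*) = 0$, Lemma~\ref{le:TFC} along $\gamma$ expresses $X(p_k)$ as an integral of $\nabla X(\gamma(s))\,\gamma'(s)$; comparing this integral with the Newton step $-\nabla X(p_k)^{-1}X(p_k)$ and bounding the resulting integrand via Proposition~\ref{ineq:inv} and the majorant inequality of Definition~\ref{de:majcon} along $\xi$, one arrives at $d(p_{k+1}, \tilde p_*) \leq t_* - t_{k+1}$. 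This is precisely the non-extremal analogue of the final equality in Lemma~\ref{le:iguald}; the computation is identical, only with the extremal equality $t_k + d(p_k, \tilde p_*) = t_*$ relaxed to the inductive inequality $t_k + d(p_k, \tilde p_*) \leq t_*$ throughout.

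The main obstacle is this inductive step. Unlike the Banach space situation, where one has the clean identity $x_{k+1} - \tilde x_* = F'(x_k)^{-1} E(x_k, \tilde x_*)$, in the Riemannian setting the distance $d(N_X(p_k), \tilde p_*)$ cannot be read off from a tangent-space difference at $p_k$. The two-piece curve $\xi \in {\cal G}_2(p_0, R)$ is precisely what allows the majorant bounds to be applied along the relevant segment, and the algebra of the estimate mirrors that of Lemma~\ref{le:iguald} so that no additional ingredients beyond those already developed in the section are required.
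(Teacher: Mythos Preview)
Your inductive step has a genuine gap that you yourself flag but do not close. Writing $u=-\nabla X(p_k)^{-1}X(p_k)$ and $v=\gamma'(0)$ for the initial velocity of a minimizing geodesic $\gamma$ from $p_k$ to $\tilde p_*$, the ingredients you list (Lemma~\ref{le:TFC}, Proposition~\ref{ineq:inv}, and the majorant inequality of Definition~\ref{de:majcon} along the $\mathcal G_2$--curve $\xi$) indeed give
\[
\|u-v\|\;=\;\bigl\|\nabla X(p_k)^{-1}P_{\gamma,1,0}E(p_k,\tilde p_*)\bigr\|\;\le\;\frac{e(t_k,t_*)}{|f'(t_k)|}\;=\;t_*-t_{k+1}.
\]
But this is a tangent-space bound at $p_k$, and what you need is the manifold inequality $d\!\left(\exp_{p_k}u,\exp_{p_k}v\right)\le\|u-v\|$. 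Without curvature assumptions---and the paper makes none---this comparison between $d(p_{k+1},\tilde p_*)$ and $\|u-v\|$ simply fails in general. Your assertion that ``the algebra of the estimate mirrors that of Lemma~\ref{le:iguald}'' does not bridge this; Lemma~\ref{le:iguald} treats the \emph{extremal} situation $t+d(p,q_*)=t_*$, where additional rigidity (in particular $d(p_0,p)=t$) is available, and there is no reason the same computation survives when the equality is relaxed to a strict inequality.

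The route the paper follows (via \cite{FerreiraSvaiter2009} and \cite{Alvarez2008}) sidesteps this exponential-map issue altogether with a continuity argument: Lemma~\ref{le:iterteta} produces the continuous interpolation $\theta\mapsto(\zeta(\theta),\tau(\theta))$ between $(p_k,t_k)$ and $(p_{k+1},t_{k+1})$ inside $K(\tau(\theta))$, and one tracks the continuous function $\psi(\theta)=\tau(\theta)+d(\zeta(\theta),\tilde p_*)$. If $d(p_0,\tilde p_*)<t_*$ then $\psi(0)<t_*$; should $\psi$ ever hit $t_*$, Corollary~\ref{cor:iguald} would force $d(p_0,\tilde p_*)=t_*$, a contradiction. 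Hence $\psi(1)=t_{k+1}+d(p_{k+1},\tilde p_*)<t_*$, completing the induction and giving $d(p_*,\tilde p_*)=0$ in the limit; the boundary case $d(p_0,\tilde p_*)=t_*$ is handled by iterating Lemma~\ref{le:iguald} directly. Lemma~\ref{le:iterteta} and Corollary~\ref{cor:iguald} are precisely the ``additional ingredients'' your proposal claims are unnecessary.
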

\begin{lemma}\label{l:usp}
Let    $q \in B(p_0,R)$ and   $\xi: [0, 1]\to {\cal M}$  a minimizing geodesic  in  $ {\cal G}_1(p_0,R)$ joinning $p_0$ to $q$. Then the following inequality holds:
  $$
 -f(d(p_0,q)) \leq \|\nabla X(p_0)^{-1} P_{\xi,1,0}X(q)\|.
  $$
  As a consequence,   $p_*$ is the unique singularity  of $X$ in $B(p_0,{\bar \tau})$, where \( {\bar \tau}:=\sup\{ t\in [t_*, R)\;:\; f(t)\leq 0\} \).
\begin{proof}
 Applying second part of Lemma~\ref{ineq:errors} with $p=p_0$, $a=0$, $b=1$, $t=0$ and  $x=d(p_0, q)$ we have
$$
e(0,d(p_0, q)) \geq \left\|\nabla X(p_0)^{-1} P_{\xi,1,0}E(p_0,q)\right\|.
$$
From Definition~\ref{de:errorX}, last inequality becomes
$$
e(0,d(p_0, q)) \geq \left\|\nabla X(p_0)^{-1}P_{\xi,1,0}X(q)-\nabla X(p_0)^{-1}X(p_0)-\xi '(0)\right\|.
$$
Using  triangular inequality in the right hand side of last inequality, it is easy to see that
$$
e(0,d(p_0, q)) \geq \left \| \xi '(0)\right \| - \left \| \nabla X(p_0)^{-1}X(p_0)\right \| - \left \| \nabla X(p_0)^{-1}P_{\xi,1,0}X(q)\right \|.
$$
Combining  Definition~\ref{de:errorf} with assumption \eqref{KH.1} and taking into account that  $\left\| \xi '(0)\right\|=d(p_0, q)$  and $f'(0)=-1$, we obtain from last inequality that
\begin{equation*}
f(d(p_0,q))-\left[f(0)+f'(0)d(p_0, q)\right] \geq
d(p_0, q)-f(0)-\left \| \nabla X(p_0)^{-1} P_{\xi,1,0}X(q)\right \|,
\end{equation*}
with is equivalent to the inequality of the lemma. Hence  the first  of the lemma is proved.

For the second part, first  note that in the interval $(t_*, \bar \tau)$ the sign of $f$ is negative. Hence, first part of the lemma implies that there is no singularity of $X$ in  $B(p_0,{\bar \tau})\backslash B[p_0,t_*]$. Therefore, from Lemma~\ref{l:sbte}, the unique singularity of $X$ in $B(p_0,{\bar \tau})$ is  $p_*\in  B[p_0, t_*]$.
\end{proof}
\end{lemma}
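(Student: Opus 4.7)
The plan is to derive the inequality by specializing Lemma~\ref{ineq:errors} to the particular geodesic $\xi$ at the parameters $a=0$, $b=1$, $t=0$, $x=d(p_0,q)$, and then unpack the definition of the linearization error $E$ to expose the quantity $\nabla X(p_0)^{-1}P_{\xi,1,0}X(q)$. Since $\xi \in {\cal G}_1(p_0,R) \subset {\cal G}_2(p_0,R)$ is a minimizing geodesic joining $p_0$ to $q$, we have $\ell[\xi,0,0]=0 \le t$ and $\ell[\xi,0,1]=d(p_0,q) \le x-t$, so the hypotheses of Lemma~\ref{ineq:errors} are satisfied and it yields $\|\nabla X(p_0)^{-1}P_{\xi,1,0}E(p_0,q)\| \le e(0,d(p_0,q))$.

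Next I would expand $E(p_0,q)$ using Definition~\ref{de:errorX}, applying $\nabla X(p_0)^{-1}$ and using the properties of parallel transport in \eqref{eq:ptp} to cancel $P_{\xi,1,0}P_{\xi,0,1}=I_{p_0}$. This rewrites the bound as
\[
e(0,d(p_0,q)) \ge \bigl\|\nabla X(p_0)^{-1}P_{\xi,1,0}X(q) - \nabla X(p_0)^{-1}X(p_0) - \xi'(0)\bigr\|.
\]
A reverse triangle inequality then produces $e(0,d(p_0,q)) \ge \|\xi'(0)\| - \|\nabla X(p_0)^{-1}X(p_0)\| - \|\nabla X(p_0)^{-1}P_{\xi,1,0}X(q)\|$. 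Using $\|\xi'(0)\| = d(p_0,q)$ (since $\xi$ is a unit-speed minimizing geodesic on $[0,1]$ parametrized to have length $d(p_0,q)$), the bound $\|\nabla X(p_0)^{-1}X(p_0)\| \le f(0)$ from hypothesis \eqref{KH.1}, and the identity $e(0,d(p_0,q)) = f(d(p_0,q)) - f(0) + d(p_0,q)$ obtained from Definition~\ref{de:errorf} together with $f'(0)=-1$, the $d(p_0,q)$ and $f(0)$ terms cancel and what remains is exactly $-f(d(p_0,q)) \le \|\nabla X(p_0)^{-1}P_{\xi,1,0}X(q)\|$.

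For the uniqueness consequence I would argue by contradiction: suppose $q \in B(p_0,\bar\tau)$ is a singularity of $X$ with $q \notin B[p_0,t_*]$, so $d(p_0,q) \in (t_*,\bar\tau)$. By the strict convexity of $f$ guaranteed by Proposition~\ref{pr:maj.f} together with $f(t_*)=0$ and the defining property of $\bar\tau$ as the supremum of points where $f\le 0$ beyond $t_*$, one has $f(t) < 0$ strictly for all $t \in (t_*,\bar\tau)$. Hence $-f(d(p_0,q)) > 0$, while the inequality just proved forces $\|\nabla X(p_0)^{-1}P_{\xi,1,0}X(q)\| \ge -f(d(p_0,q)) > 0$, contradicting $X(q)=0$. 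Thus every singularity in $B(p_0,\bar\tau)$ actually lies in $B[p_0,t_*]$, and Lemma~\ref{l:sbte} identifies $p_*$ as the unique such singularity.

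The only delicate point I expect is the bookkeeping in the first paragraph: making sure the parallel transports align so that $P_{\xi,1,0}P_{\xi,0,1}$ collapses cleanly, and then applying the triangle inequality in the correct direction so that $\|\nabla X(p_0)^{-1}P_{\xi,1,0}X(q)\|$ is isolated on the large-side of the final estimate. Everything else is a direct invocation of earlier results.
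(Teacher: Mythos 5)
Your proposal reproduces the paper's proof essentially verbatim: the same application of Lemma~\ref{ineq:errors} with $p=p_0$, $a=0$, $b=1$, $t=0$, $x=d(p_0,q)$, the same unpacking of $E(p_0,q)$ and reverse triangle inequality, the same cancellation using \eqref{KH.1}, $\|\xi'(0)\|=d(p_0,q)$ and $f'(0)=-1$, and the same negativity-of-$f$-on-$(t_*,\bar\tau)$ argument combined with Lemma~\ref{l:sbte}. The only cosmetic difference is that you phrase the uniqueness step as an explicit contradiction and invoke strict convexity to justify $f<0$ on $(t_*,\bar\tau)$, but the substance is identical.
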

\subsection{Proof of Theorem~\ref{th:knt1}} \label{Sec:proof}
The proof of Theorem~\ref{th:knt1} follow by direct  combination of  Corollary~\ref{cr:kanttk},   Corollary~\ref{cor:knt1} with Lemma~\ref{l:usp}.
\section{On the proof of the main theorem}  \label{pr:mr}
In this section Theorem~\ref{Sec:KantTheo} will be used  to  prove a robust semi-local affine invariant theorem for Newton's method for finding  a singularity of the vector field $X$, namely, Theorem~\ref{th:knt2}.  The following  result will be needed.
\begin{proposition}  \label{pr:ar2}
Let $R>0$ and $ f:[0,R)\to \mathbb{R}$ a continuously differentiable function.   Suppose that $p_0\in \Omega$,   $f$ is a majorant function for $X$ at  $p_0$  with respect to ${\cal G}_3(p_0,R)$  and satisfies {\bf h4}. If  $0\leq \rho<  \Gamma/2$, where   $\Gamma:=\sup\{ -f(t) ~:~ t\in[0,R)  \}$,  then for any $q_0\in  B[p_0,  \rho]$  the derivative    $\nabla X(q_0)$ is nonsingular. Moreover, the scalar  function    $g:[0,R-\rho)\to \mathbb{R}$,
\begin{equation*}
  g(t)=\frac{1}{|f'(\rho)|}[f(t+\rho)+2\rho],
\end{equation*}
is a majorant function for $X$ at $q_0$   with respect to ${\cal G}_2(q_0,R-\rho)$  and also satisfies condition {\bf h4}.
\end{proposition}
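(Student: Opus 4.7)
The plan is to verify each requirement of Definition~\ref{de:majcon} for $g$ at $q_0$; the only nontrivial step is the majorant inequality \eqref{eq:mc}, which I will reduce to the analogous inequality for $f$ by a concatenation of geodesics. First, Proposition~\ref{pr:maj.f}(iv) guarantees $\rho<\bar t$ and $f'(\rho)<0$. Let $\xi\colon[0,1]\to{\cal M}$ be a minimizing geodesic from $p_0$ to $q_0$; then $\xi\in{\cal G}_1(p_0,R)\subset{\cal G}_2(p_0,R)$ with $\ell[\xi,0,1]=d(p_0,q_0)\leq\rho<\bar t$, so Proposition~\ref{ineq:inv} immediately yields both the nonsingularity of $\nabla X(q_0)$ and the bound $\|\nabla X(q_0)^{-1}P_{\xi,0,1}\nabla X(p_0)\|\leq 1/|f'(\rho)|$. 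The inclusion $B(q_0,R-\rho)\subset\Omega$ follows from $B(p_0,R)\subset\Omega$ by the triangle inequality.

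For conditions \textbf{h1}, \textbf{h2}, \textbf{h4} on $g$: from $g'(t)=f'(t+\rho)/|f'(\rho)|$ one reads off $g'(0)=-1$ and the convexity/strict monotonicity of $g'$ (inherited from $f'$). The estimate $f(\rho)=f(0)+\int_0^{\rho}f'(s)\,ds\geq f(0)-\rho$, together with $f(0)>0$, gives $g(0)=(f(\rho)+2\rho)/|f'(\rho)|>0$; the same estimate combined with monotonicity of $-f$ on $[0,\bar t]$ shows $-f(t)<\rho<2\rho$ for all $t\in[0,\rho]$. Since $\Gamma>2\rho$ there exists $t_0\in[0,R)$ with $-f(t_0)>2\rho$, and the preceding observation forces $t_0>\rho$, so $t:=t_0-\rho\in(0,R-\rho)$ witnesses $g(t)<0$, which is \textbf{h4}.

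The heart of the proof is \eqref{eq:mc} for $g$. Given $\eta\in{\cal G}_2(q_0,R-\rho)$ and $0\leq a\leq b$ in $\mathrm{dom}(\eta)$, concatenate $\xi$ with $\eta$ to produce a piecewise geodesic $\bar\eta\in{\cal G}_3(p_0,R)$ (two minimizing pieces, namely $\xi$ and the first segment of $\eta$, followed by the last geodesic piece of $\eta$, of total length at most $\rho+(R-\rho)=R$). Setting $A:=P_{\eta,b,0}\nabla X(\eta(b))P_{\eta,a,b}-P_{\eta,a,0}\nabla X(\eta(a))$ and inserting $\nabla X(p_0)\nabla X(p_0)^{-1}=I_{p_0}$ together with $P_{\xi,0,1}P_{\xi,1,0}=I_{q_0}$ yields the factorization
\[
\nabla X(q_0)^{-1}A=\bigl[\nabla X(q_0)^{-1}P_{\xi,0,1}\nabla X(p_0)\bigr]\,\bigl[\nabla X(p_0)^{-1}P_{\xi,1,0}A\bigr].
\]
The first factor is already bounded by $1/|f'(\rho)|$. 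Expressing the second factor in terms of transports along $\bar\eta$ and applying \eqref{eq:mc} for $f$ at parameters $1+a$ and $1+b$ bounds it by $f'(\ell[\bar\eta,0,1+b])-f'(\ell[\bar\eta,0,1+a])$, where $\ell[\bar\eta,0,1+s]=d(p_0,q_0)+\ell[\eta,0,s]$. Since $d(p_0,q_0)\leq\rho$ and $f'$ is convex (so its divided increments are monotone in the base point), this last quantity is at most $f'(\ell[\eta,0,b]+\rho)-f'(\ell[\eta,0,a]+\rho)=|f'(\rho)|\bigl[g'(\ell[\eta,0,b])-g'(\ell[\eta,0,a])\bigr]$. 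The factors $|f'(\rho)|$ cancel when multiplied with the first bracket, leaving exactly \eqref{eq:mc} for $g$. The main obstacle is precisely this concatenation and shift bookkeeping: ensuring $\bar\eta\in{\cal G}_3(p_0,R)$, getting the directions of parallel transport right so the inserted identities collapse, and invoking convexity of $f'$ to replace the nonuniform shift $d(p_0,q_0)$ by the uniform shift $\rho$.
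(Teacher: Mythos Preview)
Your proposal is correct and follows essentially the same approach as the paper: invoke Proposition~\ref{pr:maj.f}(iv) and Proposition~\ref{ineq:inv} for nonsingularity and the bound $1/|f'(\rho)|$, verify \textbf{h1}, \textbf{h2}, \textbf{h4} for $g$ directly, and establish \eqref{eq:mc} for $g$ by concatenating a minimizing geodesic $p_0\to q_0$ with a given curve in ${\cal G}_2(q_0,R-\rho)$ to land in ${\cal G}_3(p_0,R)$, then factor through $\nabla X(p_0)$, apply \eqref{eq:mc} for $f$, and use convexity of $f'$ to replace the shift $d(p_0,q_0)$ by $\rho$. The only cosmetic difference is your \textbf{h4} verification (you exhibit an explicit $t_0$ with $-f(t_0)>2\rho$ and argue $t_0>\rho$ via $-f(t)<t$, whereas the paper passes to the limit $t\to\bar t-\rho$); both arguments are equally short.
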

\begin{proof}
Since the domain of $f$ is  $[0, R)$  and $f'(\rho)< 0$ (see Proposition~\ref{pr:maj.f} item iv ), we conclude that $g$ is well defined.   First we will prove that function $g$ satisfies conditions {\bf h1, h2, h3} and {\bf h4}. Definition of $g$ and $f'(\rho)< 0$  trivially imply  $g'(0)=-1$.  Since $f$ is convex and $f'(0)=-1$ we have $f(t)+t\geq f(0)>0$,  for all  $0\leq t<R$, which,   by using    Proposition~\ref{pr:maj.f} item {\it iv} and that $0\leq \rho$, yields  $g(0)=  [f(\rho)+2\rho]/|f'(\rho)|>0$, hence $g$ satisfies {\bf h1}. Using that $f$ satisfies {\bf h2},  we easily conclude that $g$  also satisfies {\bf h2}.  Now,  as $\rho<  \Gamma/2$,  using Proposition~\ref{pr:maj.f}  item  {\it iii},   we have
\[
\lim_{t\to \bar t-\rho}g(t)=\frac{1}{|f'(\rho)|}(2\rho-\Gamma)<0\;,
\]
which implies that  $g$ satisfies {\bf h4} and, as $g$ is continuous and $g(0)>0$,  it also satisfies  {\bf h3}.

To complete the proof, it remains to prove that $g$ satisfies \eqref{eq:mc}. First of all,   for any   $q_0\in  B[p_0,  \rho]$, from   Proposition~\ref{pr:maj.f}   item {\it iv},   we have  $d(q_0, p_0)\leq \rho<\bar t$.  Let  $\eta:[0,1] \to {\cal M}$ be the minimizing geodesic joining $p_0$ to $q_0$. Since    $\eta \in {\cal G}_1(p_0,R)\subset {\cal G}_2(p_0,R)$  and  $d(p_0, q_0)=\ell[\eta,0,1]\leq \rho< \bar{t}$
 we can apply   Proposition~\ref{ineq:inv} to obtain  that  $\nabla X(q_0)$ is nonsingular and
\begin{equation} \label{eq:blnm}
\|\nabla X(q_0)^{-1}P_{\eta,0,1}\nabla X(p_0)\| \leq  \frac{1}{\left |f'\left(\rho\right)\right|}.
\end{equation}
Because $ B(p_0,  R) \subseteq \Omega$,  for any  $q_0\in  B[p_0,  \rho]$, we trivially  have   $B(q_0,R-\rho)\subset \Omega$.   Let
$\mu: [0, T] \to \cal{M}$ such that  $\mu \in {\cal G}_2(q_0,R-\rho)$ and  $c_0, c_1,c_{2} \in [0,T]$ with $c_0=0 \leq c_1 \leq  c_2=T$ such that $\mu_{\mid_{[c_0,~c_1]}}$  is  a minimizing geodesic and $\mu_{\mid_{[c_1,~c_2]}}$ is a geodesic. Take  $a, b \in [0, T]$ with  $0\leq a \leq b$. Thus
\begin{equation*}
\mu(a),~ \mu(b)\in B(q_0,R-\rho),\qquad \ell[\mu,0,a]+\ell[\mu,a,b]<R-\rho, \qquad d(q_0, \mu(a))=\ell[\mu,0,a].
\end{equation*}
Using definitions of the curves $\eta$ and $\mu$,  properties of the  parallel transport, property of the norm and simple manipulation, we conclude that
\begin{multline} \label{eq:finm}
\left\|\nabla X(q_0)^{-1}\left[P_{\mu,b,0} \,\nabla X (\mu(b)) P_{\mu,a,b} -  \,P_{\mu,a,0}\nabla X (\mu(a))\right]\right\|  \leq   \\
\left\|\nabla X(q_0)^{-1}P_{\eta,0,1}\nabla X(p_0)\right\|  \left\|\nabla X(p_0)^{-1}P_{\eta,1,0}\left[P_{\mu,b,0} \,\nabla X (\mu(b)) P_{\mu,a,b} -  \,P_{\mu,a,0}\nabla X (\mu(a))\right]\right\|.
 \end{multline}
     Now we are going to estimate the second  norm of the right hand side of above inequality. First,  we define  $\xi:[0,\hat{T}] \to \cal{M}$  a piecewise geodesic curve   in  $ {\cal G}_3(p_0,R)$ as concatenation between the curves $\eta$ and $\mu$, i.e., take $\hat{c}_0=0<\hat{c}_1<\hat{c}_2<\hat{c}_3=\hat{T}$ such that
\begin{equation} \label{eq:ccnm}
\xi_{\mid_{[\hat{c}_0,~\hat{c}_1]}}= \eta_{\mid_{[0,~1]}}, \qquad \xi_{\mid_{[\hat{c}_1,~\hat{c}_2]}}= \mu_{\mid_{[0,~c_1]}} , \qquad \xi_{\mid_{[\hat{c}_2,~\hat{c}_3]}}= \mu_{\mid_{[c_1,~c_2]}}.
\end{equation}
Definition of $\xi$ in \eqref{eq:ccnm} and  definition of curve $\mu$  imply that there exist        $\hat{a}, \hat{b} \in \mbox{dom}(\xi)$ with  $0\leq \hat{a} \leq \hat{b}$ such that $\xi(\hat{a})=\mu(a)$ and $\xi(\hat{b})=\mu(b)$.  Therefore,  properties of parallel transport  yield $P_{\eta,1,0} P_{\mu,b,0}= P_{\xi,\hat{b},0}$. Hence,
\begin{multline*}
 \left\|\nabla X(p_0)^{-1}P_{\eta,1,0}\left[P_{\mu,b,0} \,\nabla X (\mu(b)) P_{\mu,a,b} -  \,P_{\mu,a,0}\nabla X (\mu(a))\right]\right\|= \\ \left\|\nabla X(p_0)^{-1}\left[P_{\xi,\hat{b},0} \,\nabla X (\xi(\hat{b})) P_{\xi,\hat{a},\hat{b}} -  \,P_{\xi,\hat{a},0}\nabla X (\xi(\hat{a}))\right]\right\|.
 \end{multline*}
Since $\xi \in {\cal G}_3(p_0,R)$ and $f$ is a majorant function for $X$ at  $p_0$  with respect to ${\cal G}_3(p_0,R)$, applying  Definition~\ref{de:majcon} with $a=\hat{a}$ and $b=\hat{b}$,  last equality becomes
\begin{equation} \label{eq:mcnm}
 \left\|\nabla X(p_0)^{-1}P_{\eta,1,0}\left[P_{\mu,b,0} \,\nabla X (\mu(b)) P_{\mu,a,b} -  \,P_{\mu,a,0}\nabla X (\mu(a))\right]\right\|
\leq f'\left(\ell[\xi, 0,\hat{b}] \right)-f'\left(\ell[\xi,0,\hat{a}]\right).
\end{equation}
Combining last inequality with \eqref{eq:blnm}, \eqref{eq:finm} and \eqref{eq:mcnm} we obtain
\begin{multline} \label{eq:mcnmf}
\left\|\nabla X(q_0)^{-1}\left[P_{\mu,b,0} \,\nabla X (\mu(b)) P_{\mu,a,b} -  \,P_{\mu,a,0}\nabla X (\mu(a))\right]\right\|  \leq \\ \frac{1}{\left |f'\left(\rho\right)\right|} \left[ f'\left(\ell[\xi, 0,\hat{b}] \right)-f'\left(\ell[\xi,0,\hat{a}]\right) \right].
 \end{multline}
 Since $f'$ is convex, the function $s\mapsto f'(t+s)-f'(s)$ is increasing for $t\geq 0$. Hence taking into account that  definitions of $\xi$ in \eqref{eq:ccnm} and $\mu$   imply $\ell[\xi, 0,\hat{a}]= \ell[\xi, 0,\hat{c}_1]+ \ell[\xi, \hat{c}_1, \hat{a}] \leq \rho + \ell[\mu, 0, a]$ and   $\ell[\xi, 0, \hat{b}]= \ell[\xi, 0, \hat{a}]+ \ell[\xi, \hat{a}, \hat{b}] \leq \rho + \ell[\mu, 0, a] + \ell[\mu, a , b]$,   we conclude that
$$
f'\left(\ell[\xi, 0, \hat{b}] \right)-f'\left(\ell[\xi,0,\hat{a}] \right)\leq   f'(\rho + \ell[\mu, 0, a] + \ell[\mu, a , b])-f'(\rho + \ell[\mu, 0, a]).
$$
Since $\ell[\mu, 0, b]=\ell[\mu, 0, a] + \ell[\mu, a , b]$,  combining inequality in \eqref{eq:mcnmf} and  last  inequality with the definition of the function  $g$ we have
\[
\left\|\nabla X(q_0)^{-1}\left[P_{\mu,b,0} \,\nabla X (\mu(b)) P_{\mu,a,b} -  \,P_{\mu,a,0}\nabla X (\mu(a))\right]\right\|  \leq   g'\left(\ell[\mu, 0,b] \right)-g'\left(\ell[\mu,0,a]\right),
\]
implying that the function  $g$ satisfies \eqref{de:majcon}, which complete the proof of the proposition.
\end{proof}
\begin{proposition} \label{cor:majorX}
Let    $q \in B(p_0,R)$ and   $\xi: [0, 1]\to {\cal M}$  a minimizing geodesic   joinning $p_0$ to $q$. Then the following inequality holds:
\begin{equation}
\|\nabla X(p_0)^{-1} P_{\xi,1,0}X(q)\|\leq f(d(p_0,q))+2d(p_0,q).
\end{equation}
\end{proposition}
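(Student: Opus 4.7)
The plan is to mirror the argument in Lemma~\ref{l:usp}, but with the triangle inequality applied in the opposite direction to obtain an upper bound on $\|\nabla X(p_0)^{-1}P_{\xi,1,0}X(q)\|$ rather than a lower one. Since $\xi$ is a minimizing geodesic joining $p_0$ to $q$ on $[0,1]$, we have $\xi\in {\cal G}_1(p_0,R)\subset {\cal G}_2(p_0,R)$ with $\ell[\xi,0,1]=d(p_0,q)$, so Lemma~\ref{ineq:errors} is available with $p=p_0$, $a=0$, $b=1$, $t=0$, $x=d(p_0,q)$, yielding
\[
\left\|\nabla X(p_0)^{-1} P_{\xi,1,0} E(p_0,q)\right\|\leq e(0,d(p_0,q)).
\]

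Next I would expand the linearization error by Definition~\ref{de:errorX}:
\[
E(p_0,q)=X(q)-P_{\xi,0,1}\left[X(p_0)+\nabla X(p_0)\xi'(0)\right].
\]
Applying $\nabla X(p_0)^{-1}P_{\xi,1,0}$ on both sides and using $P_{\xi,1,0}P_{\xi,0,1}=I_{p_0}$ from \eqref{eq:ptp}, I would rewrite
\[
\nabla X(p_0)^{-1} P_{\xi,1,0}E(p_0,q)=\nabla X(p_0)^{-1} P_{\xi,1,0}X(q)-\nabla X(p_0)^{-1}X(p_0)-\xi'(0).
\]

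Then the triangle inequality gives
\[
\left\|\nabla X(p_0)^{-1} P_{\xi,1,0}X(q)\right\|\leq \left\|\nabla X(p_0)^{-1} P_{\xi,1,0}E(p_0,q)\right\|+\left\|\nabla X(p_0)^{-1}X(p_0)\right\|+\left\|\xi'(0)\right\|.
\]
I would bound each term: the first by $e(0,d(p_0,q))$ from the step above, the second by $f(0)$ from hypothesis \eqref{KH.1}, and the third by $d(p_0,q)$ since $\xi$ is minimizing on $[0,1]$. Combined with Definition~\ref{de:errorf} and $f'(0)=-1$ from {\bf h1}, which gives $e(0,d(p_0,q))=f(d(p_0,q))-f(0)+d(p_0,q)$, the three contributions telescope to exactly $f(d(p_0,q))+2d(p_0,q)$, which is the desired bound.

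There is no real obstacle here: everything is a direct reassembly of the identities already used in the proof of Lemma~\ref{l:usp}, only the inequality is used in the reverse sense. The only detail to keep in mind is that we need $q\in B(p_0,R)$ so that $d(p_0,q)$ lies in the domain of $f$ and Lemma~\ref{ineq:errors} applies with $x=d(p_0,q)<R$.
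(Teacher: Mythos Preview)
Your proposal is correct and follows essentially the same route as the paper: bound the linearization error via Lemma~\ref{ineq:errors} with $p=p_0$, $a=0$, $b=1$, $t=0$, $x=d(p_0,q)$, expand $E(p_0,q)$ via Definition~\ref{de:errorX}, apply the triangle inequality, and finish using \eqref{KH.1}, $\|\xi'(0)\|=d(p_0,q)$, and $f'(0)=-1$. The only cosmetic difference is that the paper writes the triangle inequality in its reverse form (isolating $\|\nabla X(p_0)^{-1}P_{\xi,1,0}X(q)\|$ from below inside $e(0,d(p_0,q))\geq\|\cdot\|$), whereas you write it as a direct upper bound; the two are equivalent.
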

\begin{proof}
 Applying second part of Lemma~\ref{ineq:errors} with $p=p_0$, $a=0$, $b=1$, $t=0$ and  $x=d(p_0, q)$ we have
$$
e(0,d(p_0, q)) \geq \left\|\nabla X(p_0)^{-1} P_{\xi,1,0}E(p_0,q)\right\|.
$$
From Definition~\ref{de:errorX}   last inequality becomes
$$
e(0,d(p_0, q)) \geq \left\|\nabla X(p_0)^{-1}P_{\xi,1,0}X(q)-\nabla X(p_0)^{-1}X(p_0)-\xi '(0)\right\|.
$$
Using  triangular inequality in the right hand side of last inequality,  it is easy to see that
$$
e(0,d(p_0, q)) \geq  \left \| \nabla X(p_0)^{-1}P_{\xi,1,0}X(q)\right \|  - \left \| \nabla X(p_0)^{-1}X(p_0)\right \|-  \left \| \xi '(0)\right \| .
$$
Combining  Definition~\ref{de:errorf} with assumption \eqref{KH.1} and taking into account that  $\left\| \xi '(0)\right\|=d(p_0, q)$ and $f'(0)=-1$,  we obtain from last inequality that
\begin{equation*}
f(d(p_0,q))-\left[f(0)+f'(0)d(p_0, q)\right] \geq  \left \| \nabla X(p_0)^{-1} P_{\xi,1,0}X(q)\right \|-f(0)-d(p_0, q),
\end{equation*}
which is equivalent to the inequality of the lemma. Hence   the lemma is proved.
\end{proof}
\subsection{Proof of Theorem~\ref{th:knt2}}
Proposition~\ref{pr:ar2} claims that for any $q_0\in  B[p_0,  \rho]$  the derivative    $\nabla X(q_0)$ is nonsingular. Moreover, the scalar  function    $g:[0,R-\rho)\to \mathbb{R}$,
\begin{equation}\label{fmg}
  g(t)=\frac{1}{|f'(\rho)|}[f(t+\rho)+2\rho],
\end{equation}
is a majorant function for $X$ at $q_0$   with respect to ${\cal G}_2(q_0,R-\rho)$  and also satisfies condition {\bf h4}.  Let $\xi :[0, 1] \to {\cal M}$ a minimizing geodesic joining $p_0$ to $q_0$. Since item $iv$ of Proposition~\ref{pr:maj.f} implies $\ell[\xi, 0, 1]=d(p_0, q_0)\leq \rho<{\bar t}$, thus Proposition~\ref{ineq:inv}  give us
$$
\|\nabla X(q_0)^{-1}P_{\xi,0,1}\nabla X(p_0) \| \leq  \frac{1}{|f'(\rho)|}.
$$
Combining  property of norm with last inequality  and  Proposition~\ref{cor:majorX} with $q=q_0$,  we have
\begin{eqnarray*}
\|\nabla X(q_0)^{-1}X(q_0)\| & \leq & \|\nabla X(q_0)^{-1}P_{\xi,0,1}\nabla X(p_0) \| \|\nabla X(p_0)^{-1}P_{\xi,1,0}X(q_0)\| \\
& \leq & \frac{1}{|f'(\rho)|}[f(d(q_0,p_0))+2d(q_0,p_0)].
\end{eqnarray*}
As $f' \geq -1$, the function $t \mapsto f(t)+2t$ is (strictly) increasing.   Using  this fact,  above inequality, $d(p_0, q_0)\leq \rho$ and  (\ref{fmg}) we conclude that
$$
\|\nabla X(q_0)^{-1}X(q_0)\| \leq g(0).
$$
Therefore, last inequality allow us to apply Theorem~\ref{th:knt1} for $X$ and the majorant function $g$ at point $q_0$ for obtaining the desired result.

\section{Special cases} \label{SEC:SpecCase}
Kantorovich's theorem under a majorant condition  in Riemannian settings  was used in  \cite{Alvarez2008}, see also  \cite{LiJinhua2008} to prove  Kantorovich's theorem under Lipschitz  condition  in Riemannian  manifolds  \cite{FerreiraSvaiter2002},  Smale's theorem \cite{Smale1986}  and  Nesterov-Nemirovskii's theorem \cite{NesterovNemirovskii1994}.  Using the ideas of \cite{Alvarez2008}  we present,   as an application of  Theorem~\ref{th:knt2}, a   robust version of these theorems.
\subsection{Under Lipschitz's condition}
\begin{theorem}\label{th:kntlip}
Let $\cal M$ be a Riemannian manifold, $\Omega\subseteq {\cal M}$ an open set and   $\bar{\Omega}$ its  closure,  $X:\bar{\Omega} \to T{\cal M}$   a continuous vector field and continuously  differentiable on  $\Omega$. Take $p_0 \in \Omega$, $L>0$,  $\beta>0$ and $R=\sup\{ r>0~: ~B(p_0, r)\subset \Omega \}$. Suppose that $\nabla X(p_0)$ is nonsingular,  $B(p_0,  1/L)\subset \Omega$,
$$
\left\|\nabla X(p_0)^{-1}\left[P_{\xi,b,0} \,\nabla X (\xi(b)) P_{\xi,a,b} -  \,P_{\xi,a,0}\nabla X (\xi(a))\right]\right\|
\leq L\,\ell[\xi, a,b],
$$
for all $\xi$ in ${\cal G}_3(p_0, R)$ and $2\beta L < 1$. Moreover, assume that
$$
   \left\|\nabla X(p_0)^{-1}X(p_0) \right\| \leq \beta.
$$
Let $0\leq \rho<  (1-2\beta L)/(4L)$  and  $t_{*,\rho} =\left( 1-\rho L - \sqrt{1-2L(\beta+2\rho)}\right)/L$.  Then the sequence  generated by Newton's Method for solving  the equations  $X(p)=0$,  with starting  point $q_0$, for any $q_0\in  B[p_0,  \rho]$,
$$
    q_{k+1}=\emph{exp}_{q_k}\left(- \nabla X(q_k)^{-1}X(q_k)\right),  \qquad \qquad     k=0,1,\ldots\,.
$$
is well defined, $\{q_k\}$ is contained in $B(q_0,  t_{*,\rho})$ and   satisfy  the inequality
$$
 d(q_{k}, q_{k+1})  \leq  \frac{L}{2\sqrt{1-2L(\beta+2\rho)}}  d(q_{k-1}, q_{k})^2,  \qquad k=1, 2, \ldots\,
$$
Moreover,    $\{q_k\}$ converges  to $p_*\in B[q_0, t_{*,\rho}]$ such that  $X(p_*)=0$   and the convergence is   $Q$-quadratic as follows
$$
\limsup_{k\to \infty}\frac{d(q_{k+1}, p_{*})} {d(q_{k}, p_*)^2}\leq   \frac{L}{2\sqrt{1-2L(\beta+2\rho)}}.
$$
Furthermore,  if   $B(p_0,  \tau)\subset \Omega$  then $p_*$ is the unique singularity  of $X$ in $B(p_0,{\tau})$, where ${ \tau}:=\left( 1 + \sqrt{1-2\beta L}\right)/L.$
\end{theorem}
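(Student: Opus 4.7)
The plan is to derive Theorem~\ref{th:kntlip} as a direct application of Theorem~\ref{th:knt2} by exhibiting the classical Kantorovich quadratic polynomial as a majorant function. Specifically, I would set
\[
f(t) := \frac{L}{2}\,t^{2}-t+\beta, \qquad t\in[0,R),
\]
and verify that $f$ satisfies every hypothesis of Theorem~\ref{th:knt2}. First I would check \textbf{h1}--\textbf{h4}: $f(0)=\beta>0$ and $f'(0)=-1$ by construction, so \textbf{h1} holds; $f'(t)=Lt-1$ is affine and strictly increasing, so \textbf{h2} holds; and $2\beta L<1$ makes the discriminant positive, producing two real roots $\tau_{\pm}=(1\pm\sqrt{1-2\beta L})/L$, between which $f$ is strictly negative, so both \textbf{h3} and \textbf{h4} hold. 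Next, because $f'$ is affine with slope $L$,
\[
f'\bigl(\ell[\xi,0,b]\bigr)-f'\bigl(\ell[\xi,0,a]\bigr)=L\bigl(\ell[\xi,0,b]-\ell[\xi,0,a]\bigr)=L\,\ell[\xi,a,b],
\]
so the assumed Lipschitz-type inequality is \emph{exactly} the majorant condition \eqref{eq:mc} for $f$ with respect to $\mathcal{G}_{3}(p_{0},R)$, and the hypothesis $\|\nabla X(p_{0})^{-1}X(p_{0})\|\le\beta$ is precisely \eqref{KH.2}.

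Second, I would compute the global constants used by Theorem~\ref{th:knt2}. The function $-f$ attains its maximum on $[0,R)$ at $t=1/L$ (which lies in $[0,R)$ by the assumption $B(p_{0},1/L)\subset\Omega$) with value $(1-2\beta L)/(2L)$, so
\[
\Gamma=\sup_{t\in[0,R)}(-f(t))=\frac{1-2\beta L}{2L},
\]
and the constraint $\rho<\Gamma/2=(1-2\beta L)/(4L)$ is then exactly the range of $\rho$ appearing in Theorem~\ref{th:kntlip}. To locate the smallest root of $g(t)=[f(t+\rho)+2\rho]/|f'(\rho)|$, one solves $f(t+\rho)+2\rho=0$: with $u=t+\rho$ this becomes $(L/2)u^{2}-u+\beta+2\rho=0$, whose smaller root produces
\[
t_{*,\rho}=\frac{1-L\rho-\sqrt{1-2L(\beta+2\rho)}}{L},
\]
which matches the formula in the statement and, since $1-L\rho>\sqrt{1-2L(\beta+2\rho)}>0$, lies in $(0,R-\rho)$.

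Third, with $f$ and $g$ verified, Theorem~\ref{th:knt2} immediately yields the well-definedness of $\{q_{k}\}$, its containment in $B(q_{0},t_{*,\rho})$, and its $Q$-quadratic convergence to a zero $p_{*}\in B[q_{0},t_{*,\rho}]$. For the explicit quadratic constant one differentiates: $g'(t)=(L(t+\rho)-1)/(1-L\rho)$ is affine, so $D^{-}g'(t_{*,\rho})=L/(1-L\rho)$, while at $t_{*,\rho}$ one has $L(t_{*,\rho}+\rho)-1=-\sqrt{1-2L(\beta+2\rho)}$, so
\[
\frac{D^{-}g'(t_{*,\rho})}{-2g'(t_{*,\rho})}=\frac{L}{2\sqrt{1-2L(\beta+2\rho)}},
\]
which is precisely the rate constant stated in Theorem~\ref{th:kntlip}. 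For the uniqueness clause, under the added hypothesis $B(p_{0},\tau)\subset\Omega$ with $\tau=(1+\sqrt{1-2\beta L})/L$ one has $\{t\in[t_{*},R):f(t)\le 0\}=[t_{*},\tau]$, so $\bar\tau=\tau$ and the last assertion of Theorem~\ref{th:knt2} yields uniqueness of $p_{*}$ in $B(p_{0},\tau)$.

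The main obstacle is mostly bookkeeping rather than conceptual: carefully matching the two domain conventions (the $R$ inherited from $\Omega$ versus the implicit interval on which $f$ is a genuine majorant), and verifying that the Lipschitz condition stated only on $\mathcal{G}_{3}(p_{0},R)$ indeed produces a majorant on $[0,R)$ with $1/L\le R$, so that $\Gamma$ and $t_{*,\rho}$ are attained inside the admissible range and Theorem~\ref{th:knt2} applies cleanly.
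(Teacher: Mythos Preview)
Your proposal is correct and follows essentially the same approach as the paper: apply Theorem~\ref{th:knt2} with the quadratic majorant $f(t)=\tfrac{L}{2}t^{2}-t+\beta$, then read off $\Gamma$, $t_{*,\rho}$, the quadratic rate constant, and $\bar\tau$. Your computation $\Gamma=(1-2\beta L)/(2L)$ is the correct one (the paper's one-line proof writes $\Gamma=(1-2\beta L)/(4L)$, which is evidently $\Gamma/2$, the bound on $\rho$); all of your other identifications and the verification of \textbf{h1}--\textbf{h4} and \eqref{eq:mc} are exactly what is needed.
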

\begin{proof}
The proof follows from Theorem~\ref{th:knt2} with the quadratic polynomial $f(t)=\frac{L}{2}t^2-t+\beta$ as the majorant function to $X$ with respect to ${\cal G}_3(p_0,1/L)$ and $\Gamma = (1-2\beta L)/(4L)$.
\end{proof}
\subsection{Under Smale's condition}
\begin{theorem}
Let $\cal M$ be an analytic Riemannian manifold, $\Omega\subseteq {\cal M}$ an open set and    $X: \Omega \to T{\cal M}$   an analytic vector field. Let $p_0 \in {\cal M}$ be such that $\nabla X(p_0)$ is nonsingular and set  $\beta:=\left \|\nabla X(p_0)^{-1}X(p_0)\right \|$. Suppose
$$
\alpha:=\beta\gamma < 3-2\sqrt{2}, \qquad \quad \qquad \quad \gamma := \sup _{ n > 1 }\left\| \frac {1}{n !}\nabla X(p_0)^{-1}\nabla^n X(p_0)\right\|^{1/(n-1)}<\infty,
$$
$B(p_0,R)\subset \Omega$, where $R:=(1-1/\sqrt{2})/\gamma$. Let    $0 \leq \rho < [3-2\sqrt{2}-\alpha]/(2\gamma)$ and
$$
t_{*,\rho}:=\left(\alpha + 1 - 2\rho\gamma-\sqrt{(\alpha+1-2\rho\gamma)^2-8\alpha-8\rho\gamma(1-\alpha)}\right)/(4\gamma).
$$
Then the sequences generated by Newton's method for solving the equations $X(p)=0$ with starting at $q_0$, for any $q_0 \in B[p_0,\rho]$,
$$
q_{k+1}=exp_{q_k}(-\nabla X(q_k)^{-1}X(q_k)),\qquad  \qquad k=0,1,...
$$
are well defined, $\{q_k\}$ is contained in $B[q_0,t_{*,\rho}]$ and satisfy the inequality
$$
d(q_k,q_{k+1}) \leq \frac{\gamma}{(1-\gamma(t_{*,\rho}+\rho))[2(1-\gamma(t_{*,\rho}+\rho))^2-1]} d(q_{k-1},q_k)^2,  ,\qquad  \qquad k=1,2,...
$$
Moreover, $\{q_k\}$ converges to $p_* \in B[p_0,t_{*,0}]$ such that $X(p_*)=0$  and the convergence is  Q-quadratic as follows
$$\limsup_{k \to \infty}\frac{d(q_{k+1},p_*)}{d(q_k,p_*)^2} \leq \frac{\gamma}{(1-\gamma(t_{*,\rho}+\rho))[2(1-\gamma(t_{*,\rho}+\rho))^2-1]}.
$$
Furthermore,    $p_*$ is the unique singularity  of $X$ in $B\left(p_0, R\right)\subset \Omega$.
\end{theorem}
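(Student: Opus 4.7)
The plan is to realize this as a direct specialization of Theorem~\ref{th:knt2} with the classical Smale majorant function
\[
f(t) = \beta - t + \frac{\gamma t^2}{1-\gamma t}, \qquad t\in[0,R), \qquad R = \frac{1-1/\sqrt{2}}{\gamma}.
\]
First I would check the hypotheses on $f$. Direct differentiation gives $f'(t) = -2 + 1/(1-\gamma t)^2$ and $f''(t) = 2\gamma/(1-\gamma t)^3$, so $f(0)=\beta>0$, $f'(0)=-1$, and $f'$ is strictly convex and strictly increasing on $[0,R)$ (conditions h1--h2). Clearing denominators in $f(t)=0$ yields the quadratic $2\gamma t^2 - (1+\alpha)t + \beta = 0$ with discriminant $(1+\alpha)^2-8\alpha$, which is strictly positive because $\alpha<3-2\sqrt{2}$, so the smallest root $t_* = \bigl((1+\alpha) - \sqrt{(1+\alpha)^2-8\alpha}\bigr)/(4\gamma)$ lies in $(0,R)$. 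A short computation gives $f(R)=(\alpha-(3-2\sqrt{2}))/\gamma<0$, so h4 (hence h3) holds, $\bar t = R$, and $\Gamma = -f(R) = (3-2\sqrt{2}-\alpha)/\gamma$; in particular, the range condition $0\le\rho<\Gamma/2$ reduces exactly to the stated $\rho<(3-2\sqrt{2}-\alpha)/(2\gamma)$. The Kantorovich-type inequality (\ref{KH.2}) is an equality, $\|\nabla X(p_0)^{-1}X(p_0)\|=\beta=f(0)$.

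The main obstacle is to verify that $f$ satisfies the majorant condition (\ref{eq:mc}) with respect to ${\cal G}_3(p_0,R)$, i.e., that for every piecewise geodesic $\xi\in{\cal G}_3(p_0,R)$ and $0\le a\le b$ in $\mathrm{dom}(\xi)$,
\[
\bigl\|\nabla X(p_0)^{-1}\!\left[P_{\xi,b,0}\nabla X(\xi(b))P_{\xi,a,b} - P_{\xi,a,0}\nabla X(\xi(a))\right]\bigr\| \le f'(\ell[\xi,0,b])-f'(\ell[\xi,0,a]).
\]
The strategy is to apply Lemma~\ref{le:TFC2} piecewise along $\xi$, express the transported difference as an integral of second covariant derivatives along the geodesic pieces, iterate to bring in $\nabla^n X(p_0)$ via Taylor-type expansions along geodesics (using analyticity of $X$), and then control everything by the Smale bound $\|\tfrac{1}{n!}\nabla X(p_0)^{-1}\nabla^n X(p_0)\|\le \gamma^{n-1}$. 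Summing the resulting series yields precisely $f'(\ell[\xi,0,b])-f'(\ell[\xi,0,a])$ because $f''(t)=2\gamma/(1-\gamma t)^3$ is the scalar series counterpart. The bookkeeping with the three geodesic pieces of $\xi\in{\cal G}_3$ (first from $p_0$ to the common point, then along the two pieces matching $\xi(a)$ and $\xi(b)$) and the careful tracking of parallel transports is the technical heart of the argument; it is the Riemannian analog, carried out in the majorant framework, of the classical Smale estimate, and follows the lines of \cite{DedieuPriouretMalajovich2003,Alvarez2008}.

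Once the majorant structure is in place, the conclusions of Theorem~\ref{th:knt2} apply. To identify $t_{*,\rho}$ explicitly, note that $g(t) := [f(t+\rho)+2\rho]/|f'(\rho)|$ vanishes iff $f(s)=-2\rho$ with $s=t+\rho$, equivalently $2\gamma s^2 - (1+\alpha+2\rho\gamma)s + (\beta+2\rho)=0$; taking the smallest root and subtracting $\rho$ gives the stated formula. For the $Q$-quadratic constant, one computes $g'(t)=f'(t+\rho)/|f'(\rho)|$ and $D^- g'(t)=g''(t)=f''(t+\rho)/|f'(\rho)|$, so
\[
\frac{D^- g'(t_{*,\rho})}{-2g'(t_{*,\rho})}
= \frac{f''(\tau)}{-2f'(\tau)}
= \frac{\gamma}{(1-\gamma\tau)\bigl[2(1-\gamma\tau)^2-1\bigr]}, \qquad \tau := t_{*,\rho}+\rho,
\]
after multiplying numerator and denominator by $(1-\gamma\tau)^2$; this matches the claimed constant. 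Finally, uniqueness of $p_*$ in $B(p_0,R)$ follows from Theorem~\ref{th:knt2}: since $f$ is strictly decreasing on $[0,R)$ with $f(t_*)=0$ and $f(R)<0$, one has $f\le 0$ on all of $[t_*,R)$, so $\bar\tau=\sup\{t\in[t_*,R):f(t)\le 0\}=R$. All remaining assertions of the theorem are then immediate translations of the corresponding clauses of Theorem~\ref{th:knt2}.
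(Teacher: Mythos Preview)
Your proposal is correct and follows essentially the same route as the paper: specialize Theorem~\ref{th:knt2} with the Smale majorant $f(t)=\beta-2t+t/(1-\gamma t)$, verify {\bf h1}--{\bf h4} and $\Gamma=(3-2\sqrt{2}-\alpha)/\gamma$, and then read off the conclusions. The only organizational difference is that the paper isolates the verification of \eqref{eq:mc} into two separate lemmas---a pointwise second-derivative bound $\|\nabla X(p_0)^{-1}P_{\zeta,s,0}\nabla^2X(\zeta(s))\|\le f''(\ell[\zeta,0,s])$ obtained from the $\gamma$-condition (Lemma~\ref{lemma:qc1}), and a piecewise integration over ${\cal G}_3$ (Lemma~\ref{lc})---whereas you sketch both steps together; your explicit computations of $t_{*,\rho}$, the $Q$-quadratic constant, and $\bar\tau=R$ fill in details the paper leaves implicit.
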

We need the following results to prove the above theorem.
\begin{lemma} \label{lemma:qc1}
Let $\cal M$ be an analytic Riemannian manifold, $\Omega\subseteq {\cal M}$ an open set and
  $X:{\Omega}\to T{\cal M}$ an analytic vector field.   Suppose that
$p_0\in \Omega$,  $\nabla X (p_0)$ is nonsingular and that $R \leq (1-1/\sqrt{2})\gamma^{-1}$. Then, for all $\zeta \in {\cal G}_{3}(p_0,R)$
there holds
$$
\|\nabla X (p_0)^{-1}P_{\zeta, s,0}\nabla^2 X(\zeta(s)))\| \leq  (2\gamma)/\left( 1- \gamma
\ell[\zeta,0,s]\right)^3.
$$
\end{lemma}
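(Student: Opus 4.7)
The plan is to exploit analyticity of $X$ via a covariant Taylor expansion along each geodesic piece of $\zeta$ and then telescope the contributions across breakpoints. For any geodesic $\mu(t)=\exp_q(tv)$ issued from a point $q$, analyticity gives the covariant Taylor identity
\[
P_{\mu,s,0}\,\nabla^{n}X(\mu(s))=\sum_{j\geq 0}\frac{s^{j}}{j!}\,\nabla^{n+j}X(q)\bigl(\underbrace{v,\dots,v}_{j};\,\cdot\,,\dots,\,\cdot\,\bigr).
\]
Taking $q=p_{0}$, applying $\nabla X(p_0)^{-1}$, and invoking the Smale estimate $\|\nabla X(p_0)^{-1}\nabla^{m}X(p_0)\|\leq m!\,\gamma^{m-1}$ yields, after summing via $\sum_{j\geq 0}\binom{n+j}{j}x^{j}=(1-x)^{-(n+1)}$, the auxiliary bound
\[
\bigl\|\nabla X(p_0)^{-1}\,P_{\mu,s,0}\,\nabla^{n}X(\mu(s))\bigr\|\leq\frac{n!\,\gamma^{n-1}}{(1-\gamma\,\ell[\mu,0,s])^{n+1}},
\]
whose convergence is secured by $\gamma\,\ell[\mu,0,s]\leq\gamma R\leq 1-1/\sqrt{2}<1$. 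For $n=2$ this proves the lemma whenever $\zeta$ is a single geodesic emanating from $p_{0}$.

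For a general $\zeta\in{\cal G}_3(p_0,R)$ with breakpoints $0=c_{0}<c_{1}<c_{2}<c_{3}=T$, I would iterate this device piece by piece. If $s\in[c_{1},c_{2}]$, factor $P_{\zeta,s,0}=P_{\zeta|_{[0,c_{1}]},c_{1},0}\circ P_{\zeta|_{[c_{1},s]},s,c_{1}}$ and Taylor-expand the inner transport at $\zeta(c_{1})$ along the second geodesic piece. The resulting $k$-th coefficient is $\nabla X(p_{0})^{-1}P_{\zeta|_{[0,c_{1}]},c_{1},0}\nabla^{k+2}X(\zeta(c_{1}))$ applied to $k$ copies of $\zeta'(c_{1})$ plus two free slots, and is controlled by the auxiliary estimate with $n=k+2$, since $\zeta|_{[0,c_{1}]}$ is a single geodesic from $p_{0}$. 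Writing $\ell[\zeta,0,s]=\ell[\zeta,0,c_{1}]+\ell[\zeta,c_{1},s]$ and summing using $\sum_{k\geq 0}(k+1)(k+2)x^{k}=2/(1-x)^{3}$ with $x=\gamma\,\ell[\zeta,c_{1},s]/(1-\gamma\,\ell[\zeta,0,c_{1}])$, the series telescopes cleanly into $2\gamma/(1-\gamma\,\ell[\zeta,0,s])^{3}$. The case $s\in[c_{2},c_{3}]$ is dispatched by one further iteration, now factoring the transport through both earlier pieces.

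The principal obstacle is the combinatorial bookkeeping at the breakpoints: each piece forces the auxiliary estimate to be used in its higher-order form ($n>2$), and the two-parameter series must be summed and verified to collapse into the single closed form $2\gamma/(1-\gamma\,\ell[\zeta,0,s])^{3}$. The hypothesis $R\leq(1-1/\sqrt{2})\gamma^{-1}$ is precisely calibrated so that at every stage the ratio $\gamma\,\ell[\zeta,c_{i-1},s]/(1-\gamma\,\ell[\zeta,0,c_{i-1}])$ remains strictly below $1$, guaranteeing simultaneous convergence of all the nested geometric-type series.
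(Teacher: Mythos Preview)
Your approach is correct and is precisely the covariant Taylor-series argument that Lemma~5.3 of \cite{Alvarez2008} uses, which is all the paper invokes here; the telescoping of the nested binomial-type sums into $2\gamma/(1-\gamma\,\ell[\zeta,0,s])^{3}$ via $\sum_{k\ge 0}\binom{n+k}{k}x^{k}=(1-x)^{-(n+1)}$ is exactly how the bound propagates across breakpoints. One minor remark: for the convergence of the series in \emph{this} lemma only $\gamma R<1$ is needed, not the sharper constant $1-1/\sqrt{2}$, so ``precisely calibrated'' slightly overstates its role here.
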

\begin{proof}
The proof follows the same pattern  of Lemma~5.3 of \textbf{\cite{Alvarez2008}}.
\end{proof}
\begin{lemma} \label{lc}
Let $\cal M$ be an analytic Riemannian manifolds, $\Omega\subseteq {\cal M}$ an open set and
  $X:{\Omega}\to T{\cal M}$ an analytic vector field.   Suppose that
$p_0\in \Omega$ and   $\nabla X (p_0)$ is nonsingular.  If there exists an \mbox{$f:[0,R)\to \mathbb {R}$} twice continuously differentiable such that
 \begin{equation} \label{eq:lc2}
\|\nabla X (p_0)^{-1}P_{\zeta, s,0}\nabla^2 X(\zeta(s)))\|\leqslant f''(\ell[\zeta,0,s]),
\end{equation}
for all $\zeta \in {\cal G}_3(p_0,R)$ and for all $s \in \mbox{dom}(\zeta)$, then $X$ and $f$
satisfy \eqref{eq:mc} with $n=3$.
\end{lemma}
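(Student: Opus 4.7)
The plan is to reduce the majorant inequality \eqref{eq:mc} (with $n=3$) to an integration of the second-derivative bound \eqref{eq:lc2} along the curve $\xi$. The main tool is Lemma~\ref{le:TFC2}, but it requires a $C^1$ curve; since $\xi \in {\cal G}_3(p_0,R)$ is only piecewise geodesic, the integration has to be performed on each geodesic piece and then telescoped via the composition property of parallel transport.

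Fix $\xi \in {\cal G}_3(p_0,R)$ and $0\leq a\leq b$ in $\mathrm{dom}(\xi)$. Let $a=s_0\leq s_1\leq\cdots\leq s_m=b$ be the break points of $\xi$ in $[a,b]$ (with $m\leq 3$), so that each $\xi_{|[s_i,s_{i+1}]}$ is a geodesic. Fix $v\in T_{\xi(a)}\mathcal{M}$ with $\|v\|=1$ and choose a smooth vector field $Y$ on $\mathcal{M}$ whose values along $\xi$ are $Y(\xi(s))=P_{\xi,a,s}\,v$ (the parallel-section equation is $C^\infty$ on each piece and globalizes by a partition-of-unity extension). Applying Lemma~\ref{le:TFC2} on the piece $[s_i,s_{i+1}]$ gives
\[
P_{\xi,s_{i+1},s_i}\nabla X(\xi(s_{i+1}))P_{\xi,a,s_{i+1}}v - \nabla X(\xi(s_i))P_{\xi,a,s_i}v
= \int_{s_i}^{s_{i+1}} P_{\xi,s,s_i}\nabla^{2}X(\xi(s))\bigl(P_{\xi,a,s}v,\xi'(s)\bigr)\,ds .
\]
Applying $P_{\xi,s_i,0}$ to both sides, invoking the composition rule \eqref{eq:ptp}, and summing over $i=0,\ldots,m-1$ yields, after the intermediate terms cancel,
\[
P_{\xi,b,0}\nabla X(\xi(b))P_{\xi,a,b}v - P_{\xi,a,0}\nabla X(\xi(a))v
= \int_a^b P_{\xi,s,0}\nabla^{2}X(\xi(s))\bigl(P_{\xi,a,s}v,\xi'(s)\bigr)\,ds.
\]

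Then I apply $\nabla X(p_0)^{-1}$, take norms, and use that parallel transport is an isometry (so $\|P_{\xi,a,s}v\|=1$) together with the hypothesis \eqref{eq:lc2} to obtain
\[
\bigl\|\nabla X(p_0)^{-1}\bigl[P_{\xi,b,0}\nabla X(\xi(b))P_{\xi,a,b} - P_{\xi,a,0}\nabla X(\xi(a))\bigr]v\bigr\|
\leq \int_a^b f''(\ell[\xi,0,s])\,\|\xi'(s)\|\,ds.
\]
Since $\frac{d}{ds}\ell[\xi,0,s]=\|\xi'(s)\|$, the substitution $u=\ell[\xi,0,s]$ gives $\int_a^b f''(\ell[\xi,0,s])\|\xi'(s)\|\,ds = f'(\ell[\xi,0,b])-f'(\ell[\xi,0,a])$. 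Taking the supremum over unit $v$ produces exactly \eqref{eq:mc}, proving the lemma.

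The main obstacle I anticipate is the bookkeeping at the break points of $\xi$: Lemma~\ref{le:TFC2} demands a $C^1$ curve, so the telescoping of the piecewise identities via \eqref{eq:ptp} is essential. The construction of a smooth global extension $Y$ of the parallel section, and the observation that the finitely many break points form a Lebesgue-null set on which $\xi'$ may jump without affecting the integral, are routine; once those are in hand, the chain of manipulations above is straightforward.
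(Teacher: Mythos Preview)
Your proof is correct and follows essentially the same strategy as the paper's: apply Lemma~\ref{le:TFC2} on each geodesic piece of $\xi$, then combine. The only difference is organizational: the paper bounds each piece in norm first and sums via the triangle inequality (explicitly listing the six cases for the positions of $a$ and $b$ relative to the break points $c_1,c_2$), whereas you telescope the vector identities into a single integral and take one norm at the end, then evaluate via the change of variables $u=\ell[\xi,0,s]$; your packaging is a bit cleaner but the content is identical.
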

\begin{proof}
Let $\zeta$ be a curve of ${\cal G}_3(p_0,R)$,  $a, b \in \mbox{dom}(\zeta)$ with $0\leq a \leq b$.  From Definition~\ref{def:g2} there exist  $c_0, c_1,c_2, c_{3} \in [0,T]$ with $c_0=0 \leq c_1\leq c_2  \leq  c_3=T$ such that $\xi_{\mid_{[c_0,~c_1]}}$ and  $\xi_{\mid_{[c_1,~c_2]}}$ are  minimizing geodesics and $\xi_{\mid_{[c_{2},~c_3]}}$ is a geodesic. We have six possibilities:
\begin{itemize}
\item $a, b \in [c_i,~c_{i+1}]$ for $i=0, 1, 2$;
\item $a \in [c_i,~c_{i+1}]$  and $ b \in [c_{i+1},~c_{i+2}]$ for $i=0, 1$;
\item $a \in [c_0,~c_{1}]$  and $ b \in [c_{2},~c_{3}]$.
\end{itemize}
We are going to analyze the possibility $a \in [c_0,~c_{1}]$  and $ b \in [c_{2},~c_{3}]$, the others  are similar.  Since  $a \in [c_0,~c_{1}]$  and $\xi_{\mid_{[c_0,~c_1]}}$ is geodesic, taking   $v\in T_{\zeta(a)}{\cal M}$ and $Y\in {\mathcal X}(\cal M)$ the  vector field on $\zeta$ such that  $ \nabla_{\zeta'(s)}Y = 0$ and $Y(\zeta(a))=v$, we may apply  Lemma~\ref{le:TFC2} to have
\begin{equation}\label{eq:ac1}
   P_{\zeta,c_1,a} \nabla X(\zeta(c_1))\,Y(\zeta(c_1))= \nabla X(\zeta(a))Y(\zeta(a)) + \int_a^{c_1}
   P_{\zeta,s,a} \nabla^{2} X (\zeta(s)) \left(Y(\zeta(s)), \zeta'(s)\right) \,ds.
\end{equation}
Using that  $Y(\zeta(a))=v$ and $Y(\zeta(c_1))=P_{\zeta,a,c_1}v$, we obtain, after some algebraic manipulation in last equality, that
\begin{multline*}
\nabla X(p_0)^{-1}\left[P_{\zeta,c_1,0}\nabla X(\zeta(c_1))P_{\zeta,a,c_1} - P_{\zeta,a,0}\nabla X(\zeta(a)) \right]v=  \\
\int_a^{c_1} \nabla X(p_0)^{-1} P_{\zeta,s,0} \nabla^{2} X (\zeta(s)) \left(Y(\zeta(s)), \zeta'(s)\right) \,ds.
\end{multline*}
Since $\|Y(\zeta(s))\|=\|v\|$ for all $s\in[a,c_1]$ and $v$ is a arbitrary,   we conclude from  Definition ~\ref{d:norm} that
\begin{multline*}
\left\|\nabla X(p_0)^{-1}\left[P_{\zeta,c_1,0}\nabla X(\zeta(c_1))P_{\zeta,a,c_1} - P_{\zeta,a,0}\nabla X(\zeta(a)) \right]\right\| \leq \\
  \int_a^{c_1}\|\nabla X (p_0)^{-1}P_{\zeta, s,0}\nabla^2 X(\zeta(s)))\|\|\zeta'(s)\|ds.
\end{multline*}
  Now, as  $\|\zeta'(s)\|=\ell[\zeta,a,c_1]/(c_1-a)$ and $\ell[\zeta,0,s]=\ell[\zeta,0,a]+((c_1-s)/ (c_1-a))\ell[\zeta,a,c_1]<R$ for all $s\in [a, c_1]$,  using \eqref{eq:lc2} we obtain, from the last inequality, that
\begin{multline*}
 \left\|\nabla X(p_0)^{-1}\left[P_{\zeta,c_1,0}\nabla X(\zeta(c_1))P_{\zeta,a,c_1} - P_{\zeta,a,0}\nabla X(\zeta(a)) \right]\right\| \leq \\
 \int_a^{c_1}f''\left(\ell[\zeta,0,a]+\frac{c_1-s}{c_1-a}\ell[\zeta,a,c_1]\right)\frac{\ell[\zeta,a,c_1]}{c_1-a}ds.
\end{multline*}
Evaluating the latter integral, it follows that
\begin{equation}\label{ineq:ac1}
\left\|\nabla X(p_0)^{-1}\left[P_{\xi,c_1,0} \,\nabla X (\xi(c_1)) P_{\xi,a,c_1} -  \,P_{\xi,a,0}\nabla X (\xi(a))\right]\right\|
\leq f'\left(\ell[\xi, 0,c_1] \right)-f'\left(\ell[\xi,0,a]\right).
\end{equation}
On the other hand, using that $\xi_{\mid_{[c_1,~c_2]}}$ is  geodesic, similar arguments used above   show that
\begin{equation}\label{ineq:c1c2}
\left\|\nabla X(p_0)^{-1}\left[P_{\xi,c_2,0} \,\nabla X (\xi(c_2)) P_{\xi,c_1,c_2} -  \,P_{\xi,c_1,0}\nabla X (\xi(c_1))\right]\right\|
\leq f'\left(\ell[\xi, 0,c_2] \right)-f'\left(\ell[\xi,0,c_1]\right).
\end{equation}
We may also use that  $b \in [c_2,~c_{3}]$  and $\xi_{\mid_{[c_2,~c_2]}}$  is geodesic to obtain the following inequality
\begin{equation}\label{ineq:c2b}
\left\|\nabla X(p_0)^{-1}\left[P_{\xi,b,0} \,\nabla X (\xi(b)) P_{\xi,c_2,b} -  \,P_{\xi,c_2,0}\nabla X (\xi(c_2))\right]\right\|
\leq f'\left(\ell[\xi, 0,b] \right)-f'\left(\ell[\xi,0,c_2]\right).
\end{equation}
Now,  taking into account that the parallel transport is an isometry, the triangular inequality yields
\begin{multline*}
\left\|\nabla X(p_0)^{-1}\left[P_{\xi,b,0} \,\nabla X (\xi(b)) P_{\xi,a,b} -  \,P_{\xi,a,0}\nabla X (\xi(a))\right]\right\| \leq \\
\left\|\nabla X(p_0)^{-1}\left[P_{\xi,b,0} \,\nabla X (\xi(b)) P_{\xi,c_2,b} -  \,P_{\xi,c_2,0}\nabla X (\xi(c_2))\right]\right\|+\\
\left\|\nabla X(p_0)^{-1}\left[P_{\xi,c_2,0} \,\nabla X (\xi(c_2)) P_{\xi,c_1,c_2} -  \,P_{\xi,c_1,0}\nabla X (\xi(c_1))\right]\right\|+\\
\left\|\nabla X(p_0)^{-1}\left[P_{\xi,c_1,0} \,\nabla X (\xi(c_1)) P_{\xi,a,c_1} -  \,P_{\xi,a,0}\nabla X (\xi(a))\right]\right\|.
\end{multline*}
Combining last inequality with \eqref{ineq:ac1},\eqref{ineq:c1c2} and \eqref{ineq:c2b}, it follows that
\begin{equation*}
\left\|\nabla X(p_0)^{-1}\left[P_{\xi,b,0} \,\nabla X (\xi(b)) P_{\xi,a,b} -  \,P_{\xi,a,0}\nabla X (\xi(a))\right]\right\|
\leq f'\left(\ell[\xi, 0,b] \right)-f'\left(\ell[\xi,0,a]\right),
\end{equation*}
which is the desired result.
\end{proof}
\begin{proof4}
Since $\alpha < 3-2\sqrt{2}$, combining Lemma~\ref{lemma:qc1} and Lemma~\ref{lc} we have that the analytic function $f:[0,R)\to \mathbb{R}$ defined by $f(t)=\beta-2t+t/(1-\gamma t)$ is a majorant function to $X$ with respect to ${\cal G}_3(p_0,R)$. Hence, the proof follows from Theorem~\ref{th:knt2} with  $\Gamma = (3-2\sqrt{2}-\alpha)/\gamma$.
\end{proof4}
\subsection{Under  Nesterov-Nemiroviskii's  condition}
\begin{theorem}
Let $C \subset \mathbb{R}^n$ be a open convex set and $F:C \to \mathbb{R}$ be a strictly convex function, three times continuously differentiable. Take $x_0 \in C$ with $F''(x_0)$ nonsingular. Define the norm
$$
\|u\|_{x_0}:=\sqrt{\langle u,u \rangle_{x_0}}, \qquad \forall ~ u \in \mathbb{R}^n,
$$
where $\langle u,v \rangle_{x_0}=a^{-1}\langle F''(x_0)u,v\rangle$ for all $u,v \in \mathbb{R}^n$ and some $a > 0$. Suppose that $F$ is $a$-self-concordant, i.e., satisfies
$$
|F'''(x)[h,h,h]| \leq 2a^{-1/2}(F''(x)[h,h])^{3/2}, \qquad \,\,\forall ~ x \in C, ~h \in \mathbb{R}^n,
$$
$W_1(x_0)=\{x \in \mathbb{R}^n:\|x-x_0\|_{x_0}<1\} \subset C$ and there exists $\beta \geq 0$ such that
$$
\|F''(x_0)^{-1}F'(x_0)\|_{x_0} \leq \beta < 3-2\sqrt{2}.
$$
Let $0 \leq \rho < (3-2\sqrt{2}-\beta)/2$ and $ t_{*,\rho}:=\left(\alpha + 1 - 2\rho-\sqrt{(\alpha+1-2\rho)^2-8\alpha-8\rho(1-\alpha)}\right)/4.$
 Then the sequences generated by Newton's method for solving the equations $F'(x)=0$  with starting at $y_0$, for any $y_0 \in W_\rho[x_0]=\{x \in \mathbb{R}^n:\|x-x_0\|_{x_0}\leq\rho\}  $,
$$y_{k+1}=y_k- F''(y_k)^{-1}F'(y_k),\qquad  \qquad k=0,1,...$$
is  well defined, $\{y_k\}$ is contained in $W_{t_{*,\rho}}[x_0]=\{x \in \mathbb{R}^n:\|x-x_0\|_{x_0}\leq t_{*,\rho}\} $  and satisfy the inequality
$$
\| y_{k+1}-y_k\|\leq\,\, \frac{1}{(1-(t_{*,\rho}+\rho))[2(1-(t_{*,\rho}+\rho))^2-1]}\| y_{k}-y_{k-1}\|^2, \qquad \quad k=1,2,... .
$$
Moreover, $\{y_k\}$ converges to $x_* \in W_{t_{*,0}}[x_0]$ such that $F'(x_*)=0$ and the convergence is  Q-quadratic as follows
$$
\limsup_{k \to \infty}\frac{\|x_*-y_{k+1}\|}{\|x_*-y_k\|^2} \leq \frac{1}{(1-(t_{*,\rho}+\rho))[2(1-(t_{*,\rho}+\rho))^2-1]}.
$$
\end{theorem}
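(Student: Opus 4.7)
The plan is to reduce this statement to Theorem~\ref{th:knt2} by viewing $\mathbb{R}^n$ as a complete Riemannian manifold with the constant flat metric $\langle\cdot,\cdot\rangle_{x_0}$. Because this inner product does not depend on the base point, the resulting manifold is isometric to a Euclidean space, so geodesics are straight line segments, parallel transport is the identity, $\exp_y v = y+v$, and the covariant derivative of the vector field $X:=F'$ coincides with the usual Jacobian $F''$. Consequently, the Riemannian Newton iteration in Theorem~\ref{th:knt2} becomes exactly the Euclidean step $y_{k+1}=y_k-F''(y_k)^{-1}F'(y_k)$, the balls $W_r[x_0]$ coincide with the Riemannian balls $B[x_0,r]$, and the hypothesis \eqref{KH.2} is precisely the given bound $\|F''(x_0)^{-1}F'(x_0)\|_{x_0}\le\beta$.

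Next I would verify that $f:[0,1)\to\mathbb{R}$ given by $f(t)=\beta-2t+t/(1-t)$ is a majorant function for $X=F'$ at $x_0$ with respect to $\mathcal{G}_3(x_0,1)$. Since the manifold is flat, Lemma~\ref{lc} reduces the majorant inequality \eqref{eq:mc} to the pointwise third-derivative bound
\[
\|F''(x_0)^{-1}F'''(z)\|_{x_0}\le f''(\|z-x_0\|_{x_0})=\frac{2}{(1-\|z-x_0\|_{x_0})^3},\qquad z\in W_1(x_0),
\]
where the operator norm on the left is taken with respect to $\|\cdot\|_{x_0}$. This is the classical consequence of $a$-self-concordance: polarizing the inequality $|F'''(x)[h,h,h]|\le 2a^{-1/2}(F''(x)[h,h])^{3/2}$, combining it with Dikin's bound controlling $F''(z)$ by $F''(x_0)$ inside the unit Dikin ellipsoid $W_1(x_0)$, and tracking the factor $a$ which cancels because the metric is scaled by $a^{-1}$, one recovers exactly the displayed estimate. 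Conditions \textbf{h1}--\textbf{h4} are immediate from $f(0)=\beta>0$, $f'(0)=-1$, convexity of $f'$, $\lim_{t\to1^-}f(t)=+\infty$, and $f(1-1/\sqrt{2})=\beta-(3-2\sqrt2)\le0$ since $\beta<3-2\sqrt2$.

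A direct computation yields $\Gamma=\sup\{-f(t):t\in[0,1)\}=3-2\sqrt{2}-\beta$, so the assumption $0\le\rho<(3-2\sqrt{2}-\beta)/2$ is exactly $\rho<\Gamma/2$; moreover, the smallest root $t_{*,\rho}$ of $g(t)=[f(t+\rho)+2\rho]/|f'(\rho)|$ works out to the explicit quantity in the statement, and the quadratic-rate constant $D^-g'(t_{*,\rho})/(-2g'(t_{*,\rho}))$ simplifies to $1/\bigl((1-(t_{*,\rho}+\rho))[2(1-(t_{*,\rho}+\rho))^2-1]\bigr)$. Theorem~\ref{th:knt2} then delivers, for any starting point $y_0\in W_\rho[x_0]$, the well-definedness of $\{y_k\}$, its containment in $W_{t_{*,\rho}}[x_0]$, the step bound with the stated constant, convergence to some $x_*\in W_{t_{*,0}}[x_0]$ with $F'(x_*)=0$, and the $Q$-quadratic asymptotic rate.

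The main obstacle I anticipate is the pointwise third-derivative estimate: one must carefully polarize the self-concordance condition into an operator inequality on $F''(x_0)^{-1}F'''(z)$ in the $\|\cdot\|_{x_0}$-norm, invoke Dikin's ellipsoid to pass from $F''(z)$ back to $F''(x_0)$, and correctly handle the scaling factor $a$ so that the Smale-type parameter comes out to $\gamma=1$. Once this bound is in place, Lemma~\ref{lc} promotes it to the majorant condition on $\mathcal{G}_3(x_0,1)$ and the remaining work is routine algebraic matching of the convergence constants against the formulas stated in the theorem.
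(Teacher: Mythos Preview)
Your proposal is correct and follows essentially the same route as the paper: view $\mathbb{R}^n$ with the constant metric $\langle\cdot,\cdot\rangle_{x_0}$, establish the pointwise bound $\|F''(x_0)^{-1}F'''(z)\|_{x_0}\le 2/(1-\|z-x_0\|_{x_0})^3$ (which the paper obtains by citing Lemma~5.1 of \cite{Alvarez2008} rather than sketching the self-concordance/Dikin argument as you do), promote it via Lemma~\ref{lc} to a $\mathcal{G}_3$ majorant condition for $f(t)=\beta-2t+t/(1-t)$, and then invoke Theorem~\ref{th:knt2} with $\Gamma=3-2\sqrt{2}-\beta$. Your identification of the constants $t_{*,\rho}$, $\Gamma$, and the quadratic-rate constant matches the paper exactly.
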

\begin{proof}
Since $\alpha < 3-2\sqrt{2}$,   combining Lemma~$5.1$ of \cite{Alvarez2008} and Lemma~\ref{lc} we have that the  function $f:[0,R)\to \mathbb{R}$ defined by $f(t)=\beta-2t+t/(1- t)$ is a majorant function to $F'$ with respect to ${\cal G}_3(x_0,R)$. Hence, the proof follows from Theorem~\ref{th:knt2} with  $\Gamma = 3-2\sqrt{2}-b$.
\end{proof}

\section{Final remark} \label{Sec:FinalRemarl}
Let us present  some computational aspects of Newton's method in Riemmanin settings for solving the equation \eqref{eq:Inc}. Note that the first equality in \eqref{ns.KT2} is equivalent to
\begin{equation} \label{eq:nei}
q_{k+1}=\emph{exp}_{q_k}S_k,  \qquad  \nabla X(q_k)^{-1}S_k =-X(q_k), \qquad \qquad  k=0,1,....
\end{equation}
Since the solution of the linear systems in \eqref{eq:nei} for large systems is computationally expensive, namely, at each iteration the derivative at $q_k$ must be computed and stored. Besides, the solution of the linear system in \eqref{eq:nei} is required. To circumvent these drawbacks, we propose  the inexact Newton's method:  given an initial point $q_0$, the method generates a sequence $\{q_k\}$ as follows:
$$
q_{k+1}=\emph{exp}_{q_k}S_k,  \qquad  \nabla X(q_k)^{-1}S_k =-X(q_k)+r_k, \qquad  \|r_k\|\leq \theta_k \|X(q_k)\| \qquad  k=0,1,....
$$
for a suitable forcing sequence $\{\theta_k\}$, which is used to control the level of accuracy. Therefore, solutions of practical problems are obtained by computational implementations of the inexact Newton-like methods. The analysis of these methods under majorant condition will be done in the near  future.


\end{document}